\documentclass[10pt]{article}

\usepackage{xspace}
\usepackage{url}
\usepackage{mathtools}
\usepackage{amssymb}
\usepackage{amsthm}
\usepackage{empheq}
\usepackage{latexsym}
\usepackage{enumitem}
\usepackage{eurosym}
\usepackage{dsfont}
\usepackage{appendix}
\usepackage{color} 
\usepackage[bookmarks=false]{hyperref}
\usepackage{frcursive}
\usepackage[utf8]{inputenc}
\usepackage[T1]{fontenc}
\usepackage{geometry}
\usepackage{multirow}
\usepackage{comment}
\usepackage{todonotes}
\usepackage{lmodern}
\usepackage{anyfontsize}
\usepackage{stmaryrd}
\usepackage{natbib}
\usepackage{cleveref}
\usepackage[english]{babel}
\usepackage[english=british]{csquotes}
\usepackage{accents}
\usepackage{bm}
\usepackage{subfig}
\usepackage{aliascnt}

\setcitestyle{numbers,open={[},close={]}}

\definecolor{red}{rgb}{0.7,0.15,0.15}
\definecolor{green}{rgb}{0,0.5,0}
\definecolor{blue}{rgb}{0,0,0.7}
\hypersetup{colorlinks, linkcolor={red}, citecolor={green}, urlcolor={blue}}
			
\makeatletter \@addtoreset{equation}{section}

\newtheorem{theorem}{Theorem}[section]

\newaliascnt{assumption}{theorem}
\newtheorem{assumption}[assumption]{Assumption}
\aliascntresetthe{assumption}

\newaliascnt{proposition}{theorem}
\newtheorem{proposition}[proposition]{Proposition}
\aliascntresetthe{proposition}

\newaliascnt{definition}{theorem}
\newtheorem{definition}[definition]{Definition}
\aliascntresetthe{definition}

\newaliascnt{lemma}{theorem}
\newtheorem{lemma}[lemma]{Lemma}
\aliascntresetthe{lemma}

\newaliascnt{example}{theorem}
\newtheorem{example}[example]{Example}
\aliascntresetthe{example}

\newaliascnt{corollary}{theorem}
\newtheorem{corollary}[corollary]{Corollary}
\aliascntresetthe{corollary}

\newaliascnt{remark}{theorem}
\newtheorem{remark}[remark]{Remark}
\aliascntresetthe{remark}

\newaliascnt{condition}{theorem}

\aliascntresetthe{condition}

\crefname{theorem}{theorem}{theorems}
\Crefname{theorem}{Theorem}{Theorems}

\crefname{assumption}{assumption}{assumptions}
\Crefname{assumption}{Assumption}{Assumptions}

\crefname{proposition}{proposition}{propositions}
\Crefname{proposition}{Proposition}{Propositions}

\crefname{definition}{definition}{definitions}
\Crefname{definition}{Definition}{Definitions}

\crefname{lemma}{lemma}{lemmas}
\Crefname{lemma}{Lemma}{Lemmas}

\crefname{example}{example}{examples}
\Crefname{example}{Example}{Examples}

\crefname{corollary}{corollary}{corollaries}
\Crefname{corollary}{corollary}{Corollaries}

\crefname{remark}{remark}{remarks}
\Crefname{remark}{Remark}{Remarks}

\crefname{equation}{condition}{conditions}
\Crefname{equation}{Condition}{Conditions}

\DeclareUnicodeCharacter{014D}{\=o}
\newcommand{\smallertext}[1]{\text{\fontsize{5}{5}\selectfont$#1$}}
\newcommand{\smalltext}[1]{\text{\fontsize{4}{4}\selectfont$#1$}}
\newcommand{\tinytext}[1]{\text{\fontsize{3}{3}\selectfont$#1$}}

\newcommand{\diff}{\mathrm{d}}

\def\eps{\varepsilon}
\def\d{{\mathrm{d}}}
\def\1{\mathbf{1}}

\def \E{\mathbb{E}}
\def \F{\mathbb{F}}
\def \G{\mathbb{G}}

\def \L{\mathbb{L}}

\def \N{\mathbb{N}}

\def \P{\mathbb{P}}
\def \Q{\mathbb{Q}}
\def \R{\mathbb{R}}

\def\Ac{\mathcal{A}}

\def\Cc{\mathcal{C}}

\def\Ec{\mathcal{E}}
\def\Fc{\mathcal{F}}
\def\Gc{\mathcal{G}}

\def\Lc{\mathcal{L}}
\def\Mc{\mathcal{M}}
\def\Nc{\mathcal{N}}

\def\Tc{\mathcal{T}}

\def\eps{\varepsilon}
\def\d{\mathrm{d}}

\DeclareMathOperator*{\Prob}{Prob} 
 
\let\ae\relax
\DeclareMathOperator*{\ae}{\text{\rm a.e.}}

\DeclareMathOperator*{\Tr}{Tr}
\newcommand{\al}{\alpha}

\renewcommand{\|}{\Vert}

\makeatletter
\def\moverlay{\mathpalette\mov@rlay}
\def\mov@rlay#1#2{\leavevmode\vtop{%
   \baselineskip\z@skip \lineskiplimit-\maxdimen
   \ialign{\hfil$\m@th#1##$\hfil\cr#2\crcr}}}
\newcommand{\charfusion}[3][\mathord]{
    #1{\ifx#1\mathop\vphantom{#2}\fi
        \mathpalette\mov@rlay{#2\cr#3}
      }
    \ifx#1\mathop\expandafter\displaylimits\fi}
\makeatother

\begin{document}
\title{Here, there and everywhere: state-dependent time-inconsistent stochastic control}
\author{Dylan \textsc{Possama\"i}\footnote{ETH Z\"urich, Mathematics department, Switzerland, dylan.possamai@math.ethz.ch. This author gratefully acknowledges partial support by the SNF project MINT 205121-219818.} \and Mateo \textsc{Rodriguez Polo}\footnote{ETH Z\"urich, Mathematics department, Switzerland, mateo.rodriguezpolo@math.ethz.ch. This author gratefully acknowledges partial support by the SNF project MINT 205121-219818.}}

\date{\today}

\maketitle

\begin{abstract}

This paper addresses the challenge of time-inconsistent stochastic control within a continuous-time framework. Its primary focus lies in uncovering a probabilistic representation, specifically in the shape of a system of backward stochastic differential equations (BSDEs). These equations encapsulate the equilibrium value function essential for resolving cases where the present state affecting the target functional triggers the inconsistency. Additionally, the paper offers an application exemplifying this theory through the time-inconsistent linear--quadratic regulator.

\end{abstract}
\setlength{\parindent}{0pt}

\allowdisplaybreaks

\section{Introduction}\label{sec:intro}

Classical stochastic control is largely built around an intertemporal consistency principle: the policy that is optimal when the problem is posed at time $0$ remains optimal when the same optimisation is reconsidered at any later time $t$, conditional on the information available at $t$. This property is the backbone of Bellman's dynamic programming principle (DPP). It allows one to propagate value functions through conditioning and concatenation, and it leads to tractable characterisations of optimal feedback controls via Hamilton--Jacobi--Bellman (HJB) equations and verification arguments; see, for instance, \citeauthor*{fleming2006controlled} \cite{fleming2006controlled}, or \citeauthor*{yong1999stochastic} \cite{yong1999stochastic}.

\medskip
A large and important family of economically and financially motivated objectives violates this principle. In a \emph{time-inconsistent} control problem, the continuation criterion used by the agent at time $t$ differs from the criterion that will be used at a later date $s>t$. As a consequence, a plan designed at time $0$ is typically not self-enforcing: when time $t$ arrives, the agent re-optimises and may deviate from the original plan even when the underlying dynamics have not changed. Time inconsistency therefore fundamentally alters the nature of the problem. Since a global optimum in the classical sense is no longer necessarily meaningful, the relevant solution concept must be reconsidered, and one needs new analytical tools to replace the missing DPP.

\medskip
A natural resolution, going back to \citeauthor*{strotz1955myopia} \cite{strotz1955myopia}, is to interpret time inconsistency as an \emph{intrapersonal dynamic game} in which the `players' are the agent’s successive selves. This viewpoint clarifies three canonical behavioural benchmarks. A \emph{pre-committed} agent computes an optimum at time~$0$ and follows it regardless of future incentives. A \emph{naive} agent re-optimises over time as if the current plan would never be revised again. The \emph{sophisticated} (game-theoretic) agent studied in this paper instead seeks a self-enforcing, subgame-perfect strategy: no self has an incentive to deviate, given that later selves will also behave optimally from their own perspective. In discrete time, this `consistent planning' paradigm is classical \citeauthor*{phelps1968second} \cite{phelps1968second}, \citeauthor*{pollak1968consistent} \cite{pollak1968consistent}, \citeauthor*{peleg1973existence} \cite{peleg1973existence}, and it also provides behavioural foundations for quasi-hyperbolic and more general forms of discounting \citeauthor*{laibson1997golden} \cite{laibson1997golden}, \citeauthor*{odonoghue1999doing} \cite{odonoghue1999doing}. We will illustrate the quantitative gap between precommitment, naivety and sophistication in our linear--quadratic example in \Cref{sec:LQR}, and remark that analogous two-layer game-theoretic structures also arise when time inconsistency interacts with strategic considerations in multi-player games \citep{possamai2025variance}.
\medskip

In continuous time, equilibrium notions are necessarily local. A `current self' is allowed to deviate only on a short time interval, while taking the continuation behaviour of future selves as fixed, so that equilibrium controls are locally optimal in the sense of an infinitesimal deviation analysis. Several equilibrium concepts coexist in the stochastic control literature, reflecting both modelling choices (open-loop versus feedback strategies) and analytical requirements (how deviations are measured, and what regularity is imposed on the candidate strategy). The strong/weak equilibrium distinction of \citeauthor*{huang2021strong} \cite{huang2021strong} and the subsequent analysis of equilibrium notions in \citeauthor*{he2021equilibrium} \cite{he2021equilibrium} make this particularly transparent. A related, widely used notion is that of \emph{regular equilibrium}, which is tailored to the extended HJB approach and is closely connected to the solvability of equilibrium PDE systems \citeauthor*{lindensjo2019regular} \cite{lindensjo2019regular}, \citeauthor*{bjork2021time} \cite{bjork2021time}. In this paper we focus on feedback equilibria in the sense of local deviations, as this is the natural notion for dynamic programming.

\medskip
Time inconsistency can be generated by several conceptually distinct mechanisms, and the continuous-time literature reflects this diversity. First, and perhaps most prominently, \emph{non-exponential discounting} destroys stationarity: the discount factor depends on the evaluation time and induces a re-weighting of future payoffs as time passes. In continuous time this mechanism motivated the pioneering equilibrium analysis of \citeauthor*{ekeland2006being} \cite{ekeland2006being,ekeland2010golden}, \citeauthor*{ekeland2008investment} \cite{ekeland2008investment}. It remains a benchmark class and has been revisited in general Markovian settings; see, for instance, \citeauthor*{bjork2017time} \cite{bjork2017time,bjork2021time}.

\medskip
Second, \emph{nonlinear dependence on conditional expectations} breaks the DPP even when discounting is exponential. The paradigmatic example is the mean--variance criterion, which introduces a variance term (a nonlinear function of an expectation) into the objective and is central in dynamic Markowitz portfolio selection. Equilibrium formulations for mean--variance and related deviation--risk criteria have been developed in, among many others, \citeauthor*{basak2010dynamic} \cite{basak2010dynamic}, \citeauthor*{bjork2014mean} \cite{bjork2014mean}, \citeauthor*{gu2020constrained} \cite{gu2020constrained}. This line of work has also motivated robust and ambiguity-averse formulations, where time inconsistency and model uncertainty interact; see, \emph{e.g.}, \citeauthor*{pun2018robust} \cite{pun2018robust}. Time inconsistency also interacts with additional modelling features such as regime switching and discrete interventions; equilibrium analyses of time-inconsistent stochastic switching problems can be found in, for instance, \citeauthor*{mei2019equilibrium} \cite{mei2019equilibrium}.

\medskip
Third, and this is the focus of the present paper, time inconsistency may stem from \emph{state-dependent preference parameters}. In many models the criterion depends on a parameter that is updated as the state evolves---wealth-dependent risk aversion, moving targets, relative-performance benchmarks, or endogenous reference points. When this parameter is \emph{recalibrated} by each future self, different selves effectively face different objective functionals even if discounting is exponential and the reward structure is otherwise time-homogeneous. State-dependent risk aversion in deviation--risk criteria provides one family of examples \citep{bjork2014mean,gu2020constrained,pun2018robust}, but the mechanism is broader: the preference parameter may itself be the state used as a reference point, as in the criterion considered in~\eqref{eq:intro-criterion} below.

\medskip
A further important class, closely related to nonlinear expectation criteria, arises in \emph{recursive} (BSDE-type) objectives: time inconsistency can emerge from a lack of flow property in the backward component and from non-separable aggregation. This has led naturally to equilibrium characterisations in terms of flows of forward--backward SDEs and, more generally, backward stochastic Volterra integral equations (BSVIEs); see \citeauthor*{wei2017time} \cite{wei2017time}, \citeauthor*{hamaguchi2020extended} \cite{hamaguchi2020extended}, \citeauthor*{wang2021time} \cite{wang2021time}, \citeauthor*{mastrogiacomo2023subgame} \cite{mastrogiacomo2023subgame}.

\medskip
Finally, time-inconsistent \emph{stopping} (and mixed control--stopping) problems form a parallel and active strand of the literature, where the game-theoretic equilibrium concept takes a different form but shares the same conceptual origin. We refer to \citeauthor*{christensen2018finding} \cite{christensen2018finding,christensen2020time}, \citeauthor*{bayraktar2021notions} \cite{bayraktar2021notions}, \citeauthor*{bodnariu2022local} \cite{bodnariu2022local} for representative recent works and for further references.

\medskip
We concentrate on a Markovian controlled diffusion in weak formulation and on objective functionals of the form
\begin{equation}\label{eq:intro-criterion}
    J(t,x,\alpha)\coloneqq \mathbb{E}^{\P^{\smalltext{t}\smalltext{,}\smalltext{x}\smalltext{,}\smalltext{\alpha}}}\bigg[\int_t^T f\big(s,x,X_s,\alpha_s\big)\,\mathrm{d}s+\xi(x,X_T)\bigg],
    \; (t,x,\alpha)\in[0,T]\times\R^n\times\mathcal{A},
\end{equation}
where $X$ denotes the controlled state, $\alpha$ is the control, and the crucial feature is the appearance of the \emph{current state} $x$ as an additional argument in both the running and terminal payoff. When the same problem is re-evaluated at time $s>t$, the parameter $x$ is updated to $X_s$, so the continuation criterion differs from~\eqref{eq:intro-criterion} even if the control law is kept fixed. Such state-dependent updating is natural whenever payoffs are formulated relative to a moving target or a reference point that evolves with the system, rather than being fixed at time~$0$.

\medskip
At a formal level, criteria of the form~\eqref{eq:intro-criterion} are encompassed by the general Markovian equilibrium frameworks of \cite{bjork2010general,bjork2017time,bjork2021time}. The key insight in these frameworks is that equilibrium behaviour is described not by a single value function but by an extended object (an `equilibrium value function' together with auxiliary functions) whose diagonal captures the continuation values faced by each self. However, the existing Markovian literature at this level of generality proceeds primarily via \emph{verification-type} results: one postulates an extended HJB system (a coupled system of nonlinear PDEs in multiple variables) and proves that any sufficiently smooth solution yields an equilibrium control. This approach was pioneered and systematised in \cite{bjork2010general,bjork2017time} and remains central in the monograph \cite{bjork2021time}. Parallel approaches based on Pontryagin-type maximum principles lead to equilibrium characterisations in terms of flows of forward--backward SDEs, especially in linear--quadratic settings; see \citeauthor*{hu2012time} \cite{hu2012time}, \citeauthor*{hu2017time} \cite{hu2017time} and the references therein. There are also contributions focusing on the existence of closed-loop equilibria in more general models and on the relationship between different equilibrium notions; see, \emph{e.g.}, \citeauthor*{yong2012time} \cite{yong2012time}, \citeauthor*{huang2021strong} \cite{huang2021strong}, \citeauthor*{he2021equilibrium} \cite{he2021equilibrium}, \citeauthor*{wang2021closed} \cite{wang2021closed}.

\medskip
Despite this substantial progress, genuinely state-dependent time inconsistency raises conceptual and technical obstacles that, in our view, have not been fully resolved at the level of dynamic programming. The key difficulty is that the preference parameter driving the inconsistency becomes stochastic once it is updated to the current state. From a dynamic programming viewpoint, the equilibrium object is therefore not a single scalar value function: one must keep track of a \emph{family} of continuation values indexed by a reference parameter (the `reference state'), together with a consistent mechanism that selects the correct diagonal when the parameter is updated along the state process. In smooth PDE approaches this manifests in the need to solve an extended HJB system on an enlarged state space and to evaluate the solution along a diagonal. Outside smooth settings, however, it is not \emph{a priori} clear how to interpret this diagonal, how it evolves along the diffusion, and how it interacts with the equilibrium definition based on local deviations.

\medskip
By contrast, the most complete rigorous dynamic programming foundations currently available in the time-inconsistent literature focus on mechanisms where the preference parameter is either deterministic (as in non-exponential discounting) or enters through conditional expectations (as in mean--variance and deviation--risk criteria). In these cases one can often set up a flow of value functions indexed by the initial time or by auxiliary expectation variables and derive extended HJB systems, FBSDE flows, and/or BSVIE characterisations \citep{basak2010dynamic,ekeland2006being,ekeland2010golden,ekeland2008investment,hu2012time,wang2021time}. Recent works have also developed dynamic programming and viscosity-solution methods for the resulting extended HJB systems in specific settings \citeauthor*{karnam2016dynamic} \cite{karnam2016dynamic}, \citeauthor*{xu2022dynamic} \cite{xu2022dynamic}. The non-Markovian theory of \citeauthor*{hernandez2023me} \cite{hernandez2023me} provides a very general equilibrium DPP and BSDE representation for sophisticated agents, but does not cover the Markovian specialisation required for state-dependent reference parameters.

\medskip
To the best of our knowledge, a \emph{fully rigorous} dynamic programming treatment of time inconsistency stemming from \emph{state-dependent preference updating} of the form~\eqref{eq:intro-criterion} has been missing. While state dependence is present in the general Markovian frameworks above, existing results in that direction are predominantly verification-type. They do not derive a dynamic programming principle that is both necessary and sufficient and that explicitly propagates the state-dependent preference parameter through time. Providing such a dynamic programming principle, and turning it into a concrete probabilistic representation, is the central objective of the present paper.

\medskip
We develop a rigorous and operational dynamic programming theory for state-dependent time-inconsistent stochastic control in continuous time. We work in weak formulation for a controlled diffusion with uncontrolled volatility, and we seek feedback equilibrium controls. The analysis is probabilistic throughout, and the main output is an equilibrium DPP together with a Markovian system of backward stochastic differential equations (BSDEs) characterising the equilibrium value.

\medskip
The starting point is the non-Markovian equilibrium DPP of \cite{hernandez2023me}. In the state-dependent Markovian setting, this suggests that the equilibrium value at $(t,x)$ should be understood as the diagonal of a flow of continuation values indexed by a reference parameter. Turning this into a tractable Markovian object requires a way to evaluate such a flow \emph{along} the random curve given by the state process when the reference parameter is updated. The key tool enabling this step is the It\^o--Kunita--Wentzell formula \citeauthor*{kunita1981some} \cite{kunita1981some}. Roughly speaking, the It\^o--Kunita--Wentzell formula allows us to compute the semimartingale decomposition of a random field evaluated along a stochastic flow. In our context, it provides a clean and explicit `diagonal calculus' for the equilibrium flow and makes the additional drift terms generated by state dependence transparent.

\medskip
The resulting BSDE system yields a probabilistic counterpart to extended HJB systems that is compatible with low regularity. It also clarifies the role of diagonal objects that appear throughout the equilibrium literature (both in PDE and FBSDE formulations) and that are intimately connected to the local deviation structure of equilibrium definitions \citep{he2021equilibrium,hu2012time,huang2021strong}. For completeness, we recall that BSDE methods play a central role in stochastic control, both as a probabilistic representation of PDEs and as a natural language for recursive criteria; see, \emph{e.g.}, \citeauthor*{pardoux1990stochastic} \cite{pardoux1990stochastic}, \citeauthor*{el1997backward} \cite{el1997backward}.

\medskip
A second theme of the paper is a unification of \emph{time-dependent} and \emph{state-dependent} time inconsistency. In standard (time-consistent) optimal control, explicit time dependence can always be reduced to state dependence by augmenting the state with a clock variable \citep{fleming2006controlled,yong1999stochastic}. While this observation is classical, it has not been systematically exploited at the level of equilibrium dynamic programming for sophisticated agents. The reason is that, without a complete treatment of state-dependent preference updating, the reduction is essentially formal: one may embed time into an enlarged state space, but one still needs to understand how the equilibrium flow and its diagonal behave when the preference parameter becomes a component of the state.

\medskip
Our probabilistic approach, and in particular the It\^o--Kunita--Wentzell based diagonal calculus, makes this reduction transparent and explicit in the equilibrium setting. It shows that non-exponential discounting can be viewed as a special instance of state-dependent preference updating (with the `reference' being the augmented state, i.e.\ the clock), and it clarifies how the BSDE systems appearing in the discounting literature are recovered as a degenerate case of the general state-dependent theory. In that sense, the present work does more than recall the classical state-augmentation trick: it provides the missing state-dependent equilibrium theory that makes the reduction operational.

\medskip
The present paper provides, to our knowledge, the first complete Markovian dynamic programming theory for time inconsistency driven by state-dependent preference updating. Concretely, our contributions can be summarised as follows.

\smallskip
\noindent\textit{$(i)$ Equilibrium \emph{DPP} and Markovian \emph{BSDE} characterisation for state dependence.}
We establish an equilibrium DPP for the criterion~\eqref{eq:intro-criterion} and derive a Markovian system of BSDEs whose solution characterises both the equilibrium value and the equilibrium feedback control. This yields a probabilistic analogue of the extended HJB approach which does not require smooth PDE solutions and which makes the diagonal structure explicit.

\smallskip
\noindent\textit{$(ii)$ A transparent diagonal calculus via the It\^o--Kunita--Wentzell formula.}
We show that the It\^o--Kunita--Wentzell formula provides the correct probabilistic mechanism behind the diagonal terms that appear in equilibrium conditions. This clarifies and complements the extended HJB viewpoint of \cite{bjork2010general,bjork2017time,bjork2021time}, and it connects the Markovian state-dependent setting to the general non-Markovian equilibrium DPP of \cite{hernandez2023me}.

\smallskip
\noindent\textit{$(iii)$ Reduction of time dependence to state dependence in the equilibrium setting.}
We make explicit how time-dependent mechanisms such as non-exponential discounting can be embedded into the state-dependent framework via state augmentation. We then show how the corresponding equilibrium BSDE systems arise as a degenerate case of our general theory. To our knowledge, this ``time as state'' reduction has not previously been pointed out and exploited in a dynamic programming framework for sophisticated equilibrium controls.

\smallskip
\noindent\textit{$(iv)$ A tractable illustration: a time-inconsistent linear--quadratic regulator.}
We apply the general results to a time-inconsistent linear--quadratic regulator, where we obtain existence and characterisation results in a concrete class and provide numerical experiments comparing equilibrium, naive, and precommitted controls.

\medskip
The theory developed here fits naturally within the growing probabilistic approach to time-inconsistent control. On the one hand, it complements the general non-Markovian equilibrium theory of \cite{hernandez2023me} by providing an explicit Markovian specialisation adapted to state-dependent preference parameters, and by connecting it to the extended HJB paradigm through a concrete BSDE system. On the other hand, it provides a rigorous dynamic programming underpinning for Markovian state-dependent models that have previously been handled mainly through smooth verification arguments.

\medskip
Time-inconsistent preferences also arise in other domains, including contracting problems with sophisticated agents, where the failure of commitment interacts with moral hazard. We refer to \cite{hernandez2024time} for recent developments in that direction and note that, while our focus is on Markovian diffusion control, the present results strengthen the conceptual bridge between Markovian state-dependent models and the general non-Markovian probabilistic theory.

\medskip
The rest of the paper is organised as follows. \Cref{sec:pres} introduces the time-inconsistent control problem and the equilibrium concept.
\Cref{sec:mainresults} states the main results, including the equilibrium DPP and the BSDE characterisation.
\Cref{sec:LQR} studies the linear--quadratic regulator example and compares equilibrium and naive controls.
Finally, \Cref{sec:time-dependency} discusses the reduction of time dependence to state dependence and its implications for non-exponential discounting.

\medskip
{\footnotesize{\bf Notations:} Throughout this paper we take the convention $\infty-\infty\coloneqq -\infty$, and we fix a time horizon $T>0$.
$\R_\smallertext{+}$ and $\R_\smallertext{+}^\star$ denote the sets of non-negative and positive real numbers, respectively.
Given $(E,\|\cdot \|)$ a Banach space, a positive integer $p$, and a non-negative integer $q$, $\Cc^{p}_{q}(E)$ (resp. $\Cc^{p}_{q,b}(E)$) will denote the space of functions from $E$ to $\R^{p}$ which are at least $q$ times continuously differentiable (resp. and bounded with bounded derivatives).
Whenever $E=[0,T]$ (resp. $q=0$ or $b$ is not specified), we suppress the dependence on $E$ (resp. on $q$ or $b$), \emph{e.g.} $\Cc^p$ denotes the space of continuous functions from $[0,T]$ to $\R^{p}$. For any $(x,y)\in\Cc_k\times\Cc_k$, we write $\|x-y\|_\smallertext{\infty}\coloneqq \sup_{t\in[0,T]}\|x(t)-y(t)\|$. {For any dimension $k \in \N^\star$ and radius $R > 0$, we denote by $\bar{B}_\smallertext{R}$ the closed ball of radius $R$ centred at the origin in $\R^k$. That is
\[
    \bar{B}_\smallertext{R} \coloneqq \big\{ y \in \R^k : \|y\| \le R \big\}.
\]}\medskip
Given $(x,\tilde x)\in \Cc^p\times\Cc^p$ and $t\in[0,T]$, we define their concatenation $x\otimes_t  \tilde x\in\Cc^p$ by 
\[
(x\otimes_t  \tilde x)(r)\coloneqq  x(r)\mathbf{1}_{\{r\leq t\}}+(x(t)+\tilde x(r)-\tilde x(t))\mathbf{1}_{\{r\geq t\}},\; r\in[0,T].
\]
For $\varphi \in \Cc^{p}_q(E)$ with $q\geq 2$, $\partial_{xx}^2 \varphi$ will denote its Hessian matrix.
For $(u,v) \in \R^p\times\R^p$, $u\cdot v$ will denote their usual inner product, and $\|u\|$ the corresponding norm.
For positive integers $m$ and $n$, we denote by $\Mc_{m,n}(\R)$ the space of $m\times n$ matrices with real entries, and we simplify notations by setting $\Mc_n(\R)\coloneqq \Mc_{n,n}(\R)$. $\Tr [M]$ denotes the trace of a matrix $M\in  \Mc_{n}(\R)$.

\medskip
For $(\Omega, \Gc)$ a measurable space, $\Prob(\Omega)$ denotes the collection of all probability measures on $(\Omega, \Gc)$.
For $\P\in \Prob(\Omega)$ and a filtration $\G$, $\G^\smallertext{\P}\coloneqq (\Gc_t^\smallertext{\P})_{t\in[0,T]},$ denotes the $\P$-completion of $\G$.
We recall that for any $t\in [0,T]$, $\Gc^\smallertext{\P}_t\coloneqq \Gc_t\vee \sigma(\Nc^\smallertext{\P})$, where 
\[
\Nc^\smallertext{\P}\coloneqq \{N\subseteq \Omega: \exists B \in \Gc,\; N \subseteq B, \;\text{and}\; \P[B]=0\}.
\]
$\G^\smallertext{\P}_\smallertext{+}$ denotes the right limit of $\G^\smallertext{\P}$, \emph{i.e.} $\Gc_{t\smallertext{+}}^\smallertext{\P}\coloneqq \bigcap_{\eps>0} \Gc_{t\smallertext{+}\eps}^\smallertext{\P}$, $t\in[0,T)$, and $\Gc_{T\smallertext{+}}^\smallertext{\P}\coloneqq \Gc_T^\smallertext{\P}$.

\medskip
For $(s,t)\in [0,T]^2$, with $s\leq t$, $\Tc_{s,t}(\F)$ denotes the collection of $[s,t]$-valued $\F$--stopping times.

}

\section{Time-inconsistent stochastic control}\label{sec:pres}

We fix two positive integers $n$ and $d$, which represent respectively the dimension of the process controlled by the agent, and the dimension of the Brownian motion driving this controlled process. We fix a time horizon $T>0$, and consider the canonical space $\Omega\coloneqq \Cc([0,T],\R^{n})$, with canonical process $X$, and whose generic elements we denote $\omega$.

\medskip

We let $\Fc$ be the Borel $\sigma$-algebra on $\Omega$ (for the topology of uniform convergence), and we denote by $\F^\smallertext{X}\coloneqq (\Fc^\smallertext{X}_t)_{t\in[0,T]}$ the natural filtration of $X$. We let $A$ be a closed subset of $\R^k$ for some positive integer $k$, where the controls will take values. 

 \begin{remark} Note that we do not assume that $A$ is compact. This will allow the case treated in {\rm\Cref{sec:LQR}} to be included in our theory. However, we will later assume that the Hamiltonian in  \eqref{eq:Hamiltonian_Def} is attained, either due to compactness of $A$ or coercivity of the coefficients.\end{remark}

\begin{remark}
    We restrict our attention to Euclidean action spaces primarily to facilitate the heuristic derivations in {\rm\Cref{sec:mainresults}}, which rely on differentiation with respect to the control variable. However, the rigorous results of this paper $($specifically the necessity and verification theorems$)$ rely solely on measurable selection arguments. Consequently, our theory extends straightforwardly to the case where $A$ is a closed subset of an arbitrary Polish space.
\end{remark}

\subsection{Probabilistic setting}\label{sec:setting}

We will follow a similar setting to the one in \citeauthor*{hernandez2023me} \cite{hernandez2023me} restricting to a Markovian framework, and working exclusively under the weak formulation. We fix a bounded Borel measurable map $\sigma:[0,T]\times\R^n \longrightarrow \R^{n\times d}$, an initial condition $x_0\in\R^n$, and assume that there is a unique solution, denoted by $\P$, to the martingale problem for which $X$ is an $(\F^\smallertext{X},\P)$--local martingale, such that $X_0=x_0$ with $\P$-probability $1$, and $\mathrm{d}[ X]_t=\sigma(t, X_{t})\sigma^\top(t, X_{ t})\mathrm{d}t$, $\P$--a.s.. Enlarging the original probability space if necessary (see \citet*[Theorem 4.5.2]{stroock1997multidimensional}), we can find an $\R^{d}$-valued Brownian motion $W$ such that
\[
X_t=x_0+\int_0^t\sigma(r, X_{r})\mathrm{d}W_r,\; t\in[0,T].
\]

We now let $\F\coloneqq (\Fc_t)_{t\in[0,T]}$ be the $\P$--augmentation of $\F^\smallertext{X}$. We recall that uniqueness of the solution to the martingale problem implies that the predictable martingale representation property holds for $(\F,\P)$-martingales, which can be represented as stochastic integrals with respect to $X$ (see \citet*[Theorem III.4.29]{jacod2003limit}). We also mention that the right-continuity of $\F$ guarantees that $(\F,\P )$ satisfies the Blumenthal zero--one law and, in particular, all $\Fc_0$-measurable random variables are deterministic.

\medskip
We can then introduce our drift functional $b:[0,T]\times\Omega \times A\longrightarrow \R^d$, which is assumed to be Borel-measurable with respect to all its arguments. Let us recall that for any $A$-valued, $\F$-predictable process $\alpha$ such that
\begin{equation}\label{eq:integalpha}
\E^{\P}\bigg[\exp\bigg(\int_0^T b(r, X_{ r},\al_r)\cdot\mathrm{d}W_r-\frac12\int_0^T\big\|b(r, X_{ r},\al_r)\big\|^2\mathrm{d}r\bigg)\bigg]<\infty,
\end{equation} 
we can define the probability measure $\P^{\alpha}$ on $(\Omega,\Fc_T)$, whose density with respect to $\P$ is given by
\[
\frac{\mathrm{d}\P^{\alpha}}{\mathrm{d}\P}\coloneqq \exp\bigg(\int_0^T b(r, X_{ r},\al_r)\cdot\mathrm{d}W_r-\frac12\int_0^T\big\|b(r, X_{ r},\al_r)\big\|^2\d r\bigg).
\]
Moreover, by Girsanov's theorem, the process $W^{\alpha}\coloneqq W-\int_0^{\cdot} b(r, X_{ r},\al_r)\mathrm{d}r$ is an $\R^d$-valued, $(\F,\P^{\alpha})$--Brownian motion and we have
\[
    X_t=x_0+\int_0^t\sigma(r, X_{ r})b(r, X_{ r},\al_r)\d r+\int_0^t\sigma(r, X_{ r})\mathrm{d}W^{\alpha}_r,\; t\in[0,T],\; \P\text{\rm --a.s.}
\]

We define $\mathcal{A}$ to be the set of all continuous processes such that condition \eqref{eq:integalpha} holds. Let us emphasise that we are working under the so-called weak formulation of the problem. This means that the state process $X$ is fixed and, in contrast to the typical strong formulation, the Brownian motion, and the probability measure are not fixed. Indeed, the choice of $\alpha$ corresponds to the choice of probability measure $\P^{\alpha}$ and thus impacts the distribution of process $X$.

\medskip
Let us now recall the celebrated result on the existence of a well-behaved $\omega$-by-$\omega$ versions of the conditional expectation. We also introduce the concatenation of a measure and a stochastic kernel. Recall $\Omega$ is a Polish space and $\Fc$ is a countably generated $\sigma$-algebra. For $\P\in \Prob(\Omega)$ and $\tau \in \Tc_{0,T}(\F)$, $\Fc_\tau$ is also countably generated, so there exists an associated regular conditional probability distribution (r.c.p.d. for short) $(\P_x^\tau)_{x \in \Omega}$, see \citeauthor{stroock1997multidimensional} \cite[Theorem 1.3.4]{stroock1997multidimensional}, satisfying
\begin{enumerate}[label=$(\roman*)$, ref=.$(\roman*)$,wide, labelindent=0pt]
\item for every $x \in \Omega$, $\P^\tau_x$ is a probability measure on $(\Omega,\Fc)$;
\item for every $E\in \Fc$, the mapping $x\longmapsto \P^\tau_x[E]$ is $\Fc_\tau$-measurable;
\item the family $(\P_x^\tau)_{x\in \Omega}$ is a version of the conditional probability measure of $\P$ given $\Fc_\tau$, that is to say that for every $\P$-integrable, $\Fc$-measurable random variable $\xi$, we have $\E^\P[\xi|\Fc_\tau](x)=\E^{\P^\tau_x}[\xi]$, for $\P\text{--}\ae\ x \in \Omega$;
\item for every $x \in \Omega$, $\P^\tau_x [\Omega^x_\tau]=1$, where $\Omega^x_\tau\coloneqq \{ x^\prime \in \Omega : x^\prime(r)=x(r),\; 0\leq r\leq \tau(x)\}$.
\end{enumerate}

Moreover, for $\P\in \Prob(\Omega)$ and an $\Fc_\tau$-measurable stochastic kernel $(\Q_x^\tau)_{x\in\Omega}$ such that $\Q^\tau_x [\Omega^x_\tau]=1$, for every $x \in \Omega$, the concatenated probability measure is defined by
\begin{align}\label{Def:ConcMeasure}
\P \otimes_\tau \Q_\cdot [A]\coloneqq \int_\Omega \P(\d x) \int_\Omega \1_A(x \otimes_{\tau(x)}\tilde x) \Q_x(\d \tilde x),\; \forall A \in \Fc.
\end{align}
The following result, see \cite[Theorem 6.1.2]{stroock1997multidimensional}, gives a rigorous characterisation of the concatenation procedure.
\begin{theorem}[Concatenated measure]\label{Thm:Concatenated:M}
Consider a stochastic kernel $(\Q_\omega)_{\omega\in\Omega}$,  and let $\tau\in\Tc_{0,T}(\F)$. Suppose the map $\omega \longmapsto \Q_\omega$ is $\Fc_\tau$-measurable and $\Q_\omega[\Omega_\tau^\omega]=1$ for all $\omega\in \Omega$. Given $\P \in \Prob(\Omega)$, there is a unique probability measure $\P\otimes_{\tau(\cdot)} \Q_\cdot$ on $(\Omega,\Fc)$ such that $\P\otimes_{\tau(\cdot)} \Q_\cdot$ equals $\P$ on $(\Omega,\Fc_\tau)$ and $(\delta_{\omega} \otimes_{\tau(\omega)} \Q_\omega)_{\omega \in \Omega}$ is an {\rm r.c.p.d}. of $\P\otimes_{\tau(\cdot)}\Q_\cdot | \Fc_\tau$. For some $t\in[0,T]$, suppose that $\tau \geq t$, that $M:[t,T]\times \Omega \longrightarrow \R$ is a right-continuous, $\F$--progressively measurable function after $t$, such that $M_t$ is $\P\otimes_{\tau(\cdot)} \Q_\cdot$-integrable, that for all $r\in[t,T]$, $(M_{r\wedge \tau})_{r\in[t,T]}$ is an $(\F,\P)$-martingale, and that $(M_r- M_{r\wedge\tau(\omega)})_{r\in[ t,T]}$ is an $(\F,\Q_\omega)$-martingale, for all $\omega \in \Omega$. Then $(M_r)_{r\in [t,T]}$ is an $(\F,\P\otimes_{\tau(\cdot)}\Q_\cdot)$-martingale.
\end{theorem}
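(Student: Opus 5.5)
We prove the two assertions separately: existence and uniqueness of the concatenated measure, and then the martingale property.

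\emph{Step 1: existence and uniqueness.} Existence is given by the explicit formula \eqref{Def:ConcMeasure}. We first check that $\omega\longmapsto \delta_\omega\otimes_{\tau(\omega)}\Q_\omega[E]=\int_\Omega \1_E(\omega\otimes_{\tau(\omega)}\tilde\omega)\,\Q_\omega(\d\tilde\omega)$ is $\Fc_\tau$-measurable for every $E\in\Fc$. The map $(\omega,\tilde\omega)\longmapsto\omega\otimes_{\tau(\omega)}\tilde\omega$ is jointly measurable, since $\tau$ is measurable and concatenation is continuous in the paths. The kernel is $\Fc_\tau$-measurable, and $\omega\otimes_{\tau(\omega)}\tilde\omega$ depends on $\omega$ only through its path up to $\tau(\omega)$. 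A monotone class argument on the $\pi$-system of cylinder sets then gives the measurability. So $\P\otimes_\tau\Q_\cdot$ is a well-defined probability measure.

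For $E\in\Fc_\tau$ we use Galmarino's test. Since $\omega\otimes_{\tau(\omega)}\tilde\omega$ agrees with $\omega$ on $[0,\tau(\omega)]$, we get $\1_E(\omega\otimes_{\tau(\omega)}\tilde\omega)=\1_E(\omega)$. Hence the measure equals $\P$ on $\Fc_\tau$. For $F\in\Fc_\tau$ and $E\in\Fc$, the defining formula gives
\[
(\P\otimes_\tau\Q_\cdot)[E\cap F]=\int_F \delta_\omega\otimes_{\tau(\omega)}\Q_\omega[E]\,\P(\d\omega).
\]
Together with the $\Fc_\tau$-measurability shown above, this is exactly the r.c.p.d.\ property. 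Uniqueness follows because any measure with these two properties is determined by integrating its r.c.p.d.\ against its restriction to $\Fc_\tau$, namely $\P$.

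\emph{Step 2: martingale property.} Write $\bar\P\coloneqq\P\otimes_\tau\Q_\cdot$ and decompose $M_r=M_{r\wedge\tau}+(M_r-M_{r\wedge\tau})$.

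For the first term, each $M_{r\wedge\tau}$ is $\Fc_{r\wedge\tau}\subseteq\Fc_\tau$-measurable. Since $\bar\P=\P$ on $\Fc_\tau$, and conditional expectations given $\Fc_s$ of $\Fc_\tau$-measurable variables can be computed through $\Fc_{s\wedge\tau}$ (optional sampling, with right-continuity), $(M_{r\wedge\tau})$ stays a $\bar\P$-martingale.

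For the second term, $N_r\coloneqq M_r-M_{r\wedge\tau}$, fix $t\le s\le r$ and $G\in\Fc_s$. We must show $\E^{\bar\P}[(N_r-N_s)\1_G]=0$. Condition on $\Fc_\tau$ using the r.c.p.d.\ from Step 1. This gives $\E^{\bar\P}[(N_r-N_s)\1_G]=\int \E^{\delta_\omega\otimes\Q_\omega}[(N_r-N_s)\1_G]\,\P(\d\omega)$. For fixed $\omega$, the measure $\delta_\omega\otimes_{\tau(\omega)}\Q_\omega$ coincides with $\Q_\omega$, because $\Q_\omega[\Omega^\omega_\tau]=1$. Split $G=(G\cap\{\tau\le s\})\cup(G\cap\{\tau>s\})$.
\begin{itemize}
\item On $\{\tau>s\}$, both sides involve $N_r-N_s=M_r-M_{r\wedge\tau}$, and the set $G\cap\{\tau>s\}$ belongs to $\Fc_{s\wedge\tau}\subseteq\Fc_\tau$. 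It is therefore deterministic under $\Q_\omega$, and the $\Q_\omega$-martingale property of $N$ started at $\tau(\omega)$ gives zero.
\item On $\{\tau\le s\}$, the set $G\cap\{\tau\le s\}$ is in $\Fc_s$, and the $\Q_\omega$-martingale property between $s$ and $r$ gives zero directly.
\end{itemize}
Adding the two pieces yields the $\bar\P$-martingale property of $M$ on $[t,T]$.

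\emph{Main obstacle: integrability.} Interchanging the conditioning requires $N_r$ to be $\bar\P$-integrable, and the hypotheses only give integrability under each $\Q_\omega$ and of $M_t$. We handle this by localisation. Introduce $\rho_k\coloneqq\inf\{u\ge t: |M_u-M_{u\wedge\tau}|\ge k\}$, run the argument above for the stopped processes, and pass to the limit. Alternatively, we first establish $\E^{\Q_\omega}[|N_r|]$ bounds through Fatou's lemma and the martingale property under each $\Q_\omega$, which together with integrability of $M_t$ gives $\bar\P$-integrability. The careful version of this step is the delicate part; the measurability in Step 1 is standard.
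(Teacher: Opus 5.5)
\paragraph{The integrability step fails.}
Apart from its last paragraph, your proposal is the standard argument behind the Stroock--Varadhan result the paper cites: an explicit integral formula, uniqueness by disintegration, and the splitting $M=M_{\cdot\wedge\tau}+N$ with $N$ disintegrated against $\delta_\omega\otimes_{\tau(\omega)}\Q_\omega=\Q_\omega$. The genuine gap is the integrability step. Under the hypotheses as you read them, $\bar\P$-integrability of $N_r$ cannot be obtained by localisation, Fatou, or anything else, because it is false. (The hypothesis on $M_t$ is in fact empty: $M_t=M_{t\wedge\tau}$ is $\Fc_\tau$-measurable and $\P$-integrable.)

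Here is a counterexample. Work with the raw canonical filtration, as your use of Galmarino's test implicitly does. Let $\P$ be Wiener measure, fix $t\in(0,T)$ and $\tau\equiv t$, and let $\Q_\omega$ be the law of $\omega\otimes_t B$ for a Brownian motion $B$. Then $\Q_\omega[\Omega^\omega_t]=1$ and $\bar\P=\P$. Set $M_r\coloneqq\phi(X_t)(X_r-X_t)$ for $r\in[t,T]$, with $\phi(x)\coloneqq\exp(\|x\|^4)$.
\begin{itemize}
\item $M_t=0$ and $M_{\cdot\wedge\tau}\equiv 0$.
\item Under every $\Q_\omega$, the process $M_r-M_{r\wedge t}=\phi(\omega(t))(X_r-X_t)$ is a martingale.
\item Yet $\E^{\bar\P}[|M_r|]=\E^{\P}[\phi(X_t)]\,\E^{\P}[\|X_r-X_t\|]=\infty$ for every $r>t$.
\end{itemize}
Your localisation at $\rho_k$ can at best show that $M$ is a $\bar\P$-local martingale. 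Passing to the limit needs uniform integrability of $(N_{r\wedge\rho_k})_{k}$ under $\bar\P$, which is exactly what fails here. The Fatou route fails for the same reason. The $\Q_\omega$-martingale property gives $\E^{\Q_\omega}[|N_r|]<\infty$ for each fixed $\omega$, but no bound that is $\P$-integrable in $\omega$. And $M_t$ says nothing about the size of $N_r$.

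\paragraph{The missing hypothesis, and smaller points.}
What is missing is a hypothesis, not an argument. As in Stroock--Varadhan, and as the otherwise dangling quantifier ``for all $r\in[t,T]$'' in the statement indicates, one must assume that $M_r$ is $\bar\P$-integrable for every $r\in[t,T]$. Then $N_r=M_r-M_{r\wedge\tau}$ is $\bar\P$-integrable. The identity $\E^{\bar\P}[(N_r-N_s)\1_G]=\int_\Omega\E^{\Q_\omega}[(N_r-N_s)\1_G]\,\P(\d\omega)$ then follows from the r.c.p.d.\ property by monotone convergence applied to positive and negative parts, and no localisation is needed.

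Two smaller points.
\begin{itemize}
\item The hypothesis concerns $M_r-M_{r\wedge\tau(\omega)}$ with $\tau(\omega)$ frozen. Before identifying it with your $N$, you should note that $\tau=\tau(\omega)$ holds $\Q_\omega$-a.s., by $\Q_\omega[\Omega^\omega_\tau]=1$ and Galmarino's test. Once this is done, your splitting of $G$ is unnecessary: the $\Q_\omega$-martingale property gives $\E^{\Q_\omega}[(N_r-N_s)\1_G]=0$ for every $G\in\Fc_s$ directly.
\item The monotone class argument in Step 1 is superfluous. Since $\delta_\omega\otimes_{\tau(\omega)}\Q_\omega=\Q_\omega$, the required $\Fc_\tau$-measurability is just the assumption on the kernel.
\end{itemize}
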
 

In particular, for an $\Fc$-measurable function $\xi$, $\E^{\P\otimes_\smalltext{\tau} \P^\smalltext{\tau}_\smalltext{\cdot}}[\xi]=\E^\P[\E^\P[\xi|\Fc_\tau]]=\E^\P[\xi]$. This is the classical tower property. Additionally, the reverse implication in the last statement in \Cref{Thm:Concatenated:M} holds by \cite[Theorem 1.2.10]{stroock1997multidimensional}.

\medskip
In particular, the exposition above means that we can ensure the existence of probability measures indexed by $(t, x, \alpha) \in [0,T] \times \R^{n} \times \mathcal{A}$ under which the state process satisfies, for $s\in [t,T]$

\[
X_s=x+\int_t^s\sigma(r, X_{ r})b(r, X_{ r},\al_r)\d r+\int_t^s\sigma(r, X_r)\mathrm{d}W^{\alpha}_r,\; t\in[0,T],\; \P^{t, x, \alpha}\text{\rm--a.s.},
\]
where $W^{\alpha}$ is a Brownian motion with respect to $\P^{t, x, \alpha} \coloneqq  (\P^{\alpha})^t_x.$ 

\subsection{Target functional}\label{Section:targetfunctional}

Let us introduce the running and terminal payoff functionals
\begin{align}\label{eq:target_functional_weak}
    J(t, x, \alpha) \coloneqq  \E^{\P^{\smalltext{t}, \smalltext{x}, \smalltext{\alpha}}}\bigg[\int_{t}^T f(s,    x, X_s, \al_s)\diff s + \xi(x, X_T)\bigg],
\end{align}
where $f:[0,T]\times \R^n \times \R^n \times A \longrightarrow \R$ and $\xi:\R^n\times\R^n \longrightarrow \R$ are Borel-measurable functions. We will refer to $f$ as the running payoff function and $\xi$ as the terminal payoff function.

\medskip
 We will sometimes refer to a more generic payoff functional of the form
\begin{align*}
 J(t, x, y, \alpha) \coloneqq  \E^{\mathbb{P}^{\smalltext{t}, \smalltext{x}, \smalltext{\alpha}}}\bigg[\int_{t}^T f(s,    y, X_s, \al_s)\mathrm{d}s + \xi(y, X_T)\bigg], \; (t,x, \alpha)\in [0,T]\times \R^n \times  \Ac.
\end{align*}

Note that we have that $ J(t, x, x, \alpha) =  J(t, x, \alpha)$, justifying our nomenclature. As introduced earlier, we remark that the appearance of $x$ in both functions in the reward functional creates the time-inconsistency. The goal of the controller will be, roughly speaking, to choose $\alpha$ to maximise \eqref{eq:target_functional_weak}. However, since their preferences change over time, it is not clear what we mean mathematically by this. In the next subsection, we introduce the precise notion of controls that we will be interested in.

\subsection{Game formulation}\label{Section:gameformulation}

We recall that a strategy profile is \textit{sub-game perfect} if it prescribes a Nash equilibrium in any sub-game.  In our framework, every player together with a past trajectory define a new sub-game. This motivates the idea behind the definition of an equilibrium model, see among others \citeauthor{bjork2010general} \cite{bjork2010general}, \citeauthor{ekeland2006being} \cite{ekeland2006being} and \citeauthor{strotz1955myopia} \cite{strotz1955myopia}. The intuition behind this consideration is that at each point in time a different player stands (which can be thought of different versions of one-self), and we intuitively try to achieve a sub-game perfect strategy.

\medskip
Let $\alpha \in \Ac$ be an action, $(t,x) \in [0,T]\times \R^n$ an arbitrary initial condition, and $\ell\in (0, T-t]$. We recall that $\alpha\otimes_{\tau}\alpha^\star\coloneqq \alpha\1_{[ t, \tau)}+
\alpha^\star\1_{[ \tau ,T]}$.

\begin{definition}[\textit{Equilibrium control}]\label{Def:equilibrium}
Let $\alpha^\star \in \mathcal{A}$ be an admissible control. We say that $\alpha^\star$ is an equilibrium control, if for any $\varepsilon >0$, we have that $\ell_\varepsilon >0$, where
\[
\ell_\varepsilon: = \inf\big\{\ell>0 : \exists\alpha \in \Ac,\; \mathbb{P}[\{ \exists t\in [0,T], J(t, X_t, \alpha^\star) < J(t, X_t, \alpha\otimes_{\ell}\alpha^\star) - \varepsilon\ell\}]> 0\big\}.
\]

In this case, we write $\alpha^\star \in \Ec$.
\end{definition}

\begin{remark}
    We can show that one can recover the essence of the classical definition in {\rm\cite{bjork2014theory}} in the following sense: assume that $\alpha^\star$ is an equilibrium control as in the previous definition, and let $\varepsilon>0$. Then, there exists some $\ell_\varepsilon>0$ and a set $\Tilde{\Omega}$ with $\mathbb{P}[\Tilde{\Omega}] = 1$ with
    \[
    J(t, X_t, \alpha^\star) - J(t, X_t, \alpha\otimes_{\ell}\alpha^\star) \geq -\varepsilon\ell, \;\forall (\ell, X_t, \alpha) \in (0,  \ell_\varepsilon)\times \Tilde{\Omega}\times A.
    \]

Now, as $\varepsilon$ was arbitrary, we can take a sequence $\varepsilon_n = 1/n$, $n\in\N^\star$, with their corresponding sets $\tilde{\Omega}_n$, and on $\Omega^\star \coloneqq \bigcap_{n\in\N^\smalltext{\star}} \tilde{\Omega}_n$ we have that
\[
\liminf_{\ell \downarrow 0}\frac{J(t, X_t, \alpha^\star) - J(t, X_t, \alpha\otimes_{\ell}\alpha^\star)}{\ell} \geq 0.
\]

\end{remark}

In the rest of the document we fix some $(t, x) \in [0,T] \times \R^n$ and study the problem 
\begin{align*}\label{P}\tag{P}
v(t,x)\coloneqq J(t,x,\alpha^\star),\; (t,x)\in[0,T]\times \R^n,\; \alpha^\star \in \Ec.
\end{align*}

Thanks to the weak uniqueness assumption, $v$ is well-defined for all $(t,x)\in [0,T]\times  \R^n$ and Borel-measurable. 

\begin{remark}\label{Remark:valuefunction}
\eqref{P} is fundamentally different from the problem of maximising $\Ac \ni \alpha \longmapsto J(t,x,\alpha)$. In \eqref{P}, one finds $\alpha^\star\in \Ac$ first and then defines the value function. This contrasts with the classical formulation of optimal control problems. Second, the previous maximisation will find player $t$'s so-called pre-committed strategy.
\end{remark}

\subsection{Functional spaces}\label{sec:functional_spaces}

In this section, we introduce the spaces of processes that we will be using throughout this paper. We first recall the standard spaces of square-integrable processes
\begin{itemize}
    \item $\mathbb{S}^2(\R^n,\F,\P)$: the space of $\F$--progressively measurable, càdlàg processes $Y$ taking values in $\R^n$ such that
    \[ \|Y\|_{\mathbb{S}^\smalltext{2}(\R^\smalltext{n},\F,\P)}^2 \coloneqq  \E^\P \bigg[ \sup_{t \in [0, T]} \|Y_t\|^2 \bigg] < \infty. \]
    \item $\mathbb{H}^2(\R^d,\F,\P)$: the space of $\F$-predictable processes $Z$ taking values in $\R^d$ such that
    \[ \|Z\|_{\mathbb{H}^\smalltext{2}(\R^\smalltext{d},\F,\P)}^2 \coloneqq  \E^\P \bigg[ \int_0^T \|Z_t\|^2 \d t \bigg] < \infty. \]
\end{itemize}

For the derivative processes, which depend on the parameter $y \in \R^n$, we require well-posedness uniform on compact sets. Toward this purpose, we introduce the spaces of locally square-integrable random fields. 
\begin{definition}[Locally uniform random fields] \label{def:locallyuniformrandomfields}
    Let $\mathcal{U} = (\mathcal{U}^y)_{y \in \R^\smalltext{n}}$ and $\mathcal{V} = (\mathcal{V}^y)_{y \in \R^\smalltext{n}}$ be two families of stochastic processes indexed by $y$.
    \begin{itemize}
        \item We say $\mathcal{U} \in \mathfrak{S}^2_{\mathrm{loc}}(\R^n,\F,\P)$ if the map $y \longmapsto \mathcal{U}^y$ is continuous from $\R^n$ to $\mathbb{S}^2(\R^n,\F,\P)$, and bounded on compact sets. That is, for any compact set $K \subset \R^n$
        \[ \sup_{y \in K} \|\mathcal{U}^y\|_{\mathbb{S}^\smalltext{2}(\R^\smalltext{n},\F,\P)} < \infty. \]
        
        \item We say $\mathcal{V} \in \mathfrak{H}^2_{\mathrm{loc}}(\R^d,\F,\P)$ if the map $y \longmapsto \mathcal{V}^y$ is continuous from $\R^d$ to $\mathbb{H}^2(\R^d,\F,\P)$, and for any compact set $K \subset \R^d$
        \[ \sup_{y \in K} \|\mathcal{V}^y\|_{\mathbb{H}^\smalltext{2}(\R^\smalltext{d},\F,\P)} < \infty. \]
    \end{itemize}
    The spaces are equipped with the topology induced by the family of semi-norms $\{\sup_{y \in \bar{B}_\smalltext{R}} \|\cdot\|\}_{R > 0}$.
  
\end{definition}

\subsubsection{Auxiliary weighted functional spaces and norms}

To carry out the proof of well-posedness, we introduce the specific polynomial weight function $\rho: \R^n \longrightarrow \R_+$ defined by 
\[
    \rho(y) \coloneqq (1 + \|y\|^2)^{-k},
\]
where $k \ge 1$ is a fixed integer chosen sufficiently large relative to the growth rate $m$ appearing in \Cref{ass:poly_growth}. Specifically, we require $2k \ge m$, as we will see later.

\medskip

\begin{remark}[General growth conditions]
    The choice of the weight function $\rho$ has been made for presentation purposes and to directly encompass the {\rm LQR} example that we will present in {\rm \Cref{sec:LQR}}. See also {\rm \Cref{rem:weight}}.
\end{remark}

For any $\beta > 0$ and dimension $d\in \N^\star$, we define the following Banach spaces for processes on $[0, T]$.
    \begin{itemize}
        \item $\mathbb{H}^2_\beta(\R^d,\F,\P)$ is the space of $\R^d$-valued, $\F$-predictable processes $Z$ such that 
        \[
        \|Z\|^2_{\mathbb{H}^\smalltext{2}_\smalltext{\beta}(\R^\smalltext{d},\F,\P)} \coloneqq  \E^\P \bigg[ \int_0^T \mathrm{e}^{\beta t} \|Z_t\|^2 \d t \bigg] < \infty.
        \]
        \item $\mathbb{S}^2_\beta(\R^d,\F,\P)$ is the space of $\R^d$-valued, $\F$-optional càdlàg processes $Y$ such that 
        \[
        \|Y\|^2_{\mathbb{S}^\smalltext{2}_\smalltext{\beta}(\R^\smalltext{d},\F,\P)} \coloneqq  \E^\P \bigg[ \sup_{t \in [0, T]} \mathrm{e}^{\beta t} \|Y_t\|^2 \bigg] < \infty.
        \]
    \end{itemize}

    Note that the norms are equivalent for all values of $\beta$ since $[0, T]$ is compact. Let $U = (U_t^y)_{y \in \R^\smalltext{n}}$ be a random field where, for each $y$, $U^y$ is a process. We define the weighted spaces:
    \begin{itemize}
        \item $\mathbb{S}^{2,2}_{\beta, \rho}(\R^d,\F,\P)$ is the space of random fields $U$ such that $U^y \in \mathbb{S}^2_\beta(\R^{d},\F,\P)$ for all $y$, the map $y \longmapsto U^y$ is continuous from $\R^n$ to $\mathbb{S}^2_\beta(\R^d,\F,\P)$, and
        \[ \|U\|^2_{\mathbb{S}^{\smalltext{2}\smalltext{,}\smalltext{2}}_{\smalltext{\beta}\smalltext{,} \smalltext{\rho}}(\R^\smalltext{d},\F,\P)} \coloneqq  \sup_{y \in \R^\smalltext{n}} \Big\{ \rho(y)  \|U^y\|^2_{\mathbb{S}^\smalltext{2}_\smalltext{\beta}(\R^\smalltext{d},\F,\P)} \Big\} < \infty.
        \]
        \item $\mathbb{H}^{2,2}_{\beta, \rho}(\R^d,\F,\P)$ is the space of random fields $V$ such that $V^y \in \mathbb{H}^2_\beta(\R^{d},\F,\P)$ for all $y$, the map $y \longmapsto V^y$ is continuous from $\R^n$ to $\mathbb{H}^2_\beta(\R^d)$, and
        \[ \|V\|^2_{\mathbb{H}^{\smalltext{2}\smalltext{,}\smalltext{2}}_{\smalltext{\beta}\smalltext{,} \smalltext{\rho}}(\R^\smalltext{d},\F,\P)} \coloneqq  \sup_{y \in \R^\smalltext{n}} \Big\{ \rho(y)  \|V^y\|^2_{\mathbb{H}^\smalltext{2}_\smalltext{\beta}(\R^\smalltext{d},\F,\P)} \Big\} < \infty.
        \]
    \end{itemize}

We define the global product space $\mathcal{K}_{\beta}^{n,d}$ for the tuple $(Y, Z, \partial Y, \partial Z, \partial\partial Y, \partial\partial Z)$, which will solve the BSDE system \eqref{BSDE Weak Formulation}, to be introduced in \Cref{sec:mainresults}:
\medskip

\begin{equation}\label{eq:K_beta_def}
    \mathcal{K}_{\beta}^{n,d}(\F,\P) \coloneqq 
    \underbrace{ \mathbb{S}^2_\beta(\R,\F,\P) \times \mathbb{H}^2_\beta(\R^d,\F,\P) }_{\text{Value process } (Y, Z)} 
    \times 
    \underbrace{\mathbb{S}^{2,2}_{\beta, \rho}(\R^n,\F,\P) \times \mathbb{H}^{2,2}_{\beta, \rho}(\R^{n \times d},\F,\P) }_{\text{Gradient process } (\partial Y, \partial Z)} 
    \times 
    \underbrace{ \mathbb{S}^{2,2}_{\beta, \rho}(\R^{n \times n},\F,\P) \times \mathbb{H}^{2,2}_{\beta, \rho}(\R^{n \times n \times d},\F,\P)}_{\text{Hessian process } (\partial\partial Y, \partial\partial Z)}.
\end{equation}

\begin{proposition}[Banach structure]\label{prop:BanachSpace}
    The space $\mathcal{K}_{\beta}^{n,d}(\F,\P)$ is a Banach space.
\end{proposition}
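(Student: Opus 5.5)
Since a finite product of Banach spaces, equipped with the sum (or maximum) of the component norms, is again a Banach space, the plan reduces to checking each of the six factors in \eqref{eq:K_beta_def} separately. The two unweighted factors $\mathbb{S}^2_\beta(\R,\F,\P)$ and $\mathbb{H}^2_\beta(\R^d,\F,\P)$ are handled first. Because $1\le \mathrm{e}^{\beta t}\le \mathrm{e}^{\beta T}$ on $[0,T]$, their norms are equivalent to the usual ones, so completeness can be taken from the classical statements. Concretely, $\mathbb{H}^2$ is a closed subspace of $L^2(\Omega\times[0,T],\mathcal{P},\d\P\otimes\d t)$, where $\mathcal P$ is the predictable $\sigma$-algebra. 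For $\mathbb{S}^2$, take a Cauchy sequence and extract a subsequence with $\sum_k\|Y^{k+1}-Y^k\|_{\mathbb{S}^2}<\infty$. Borel--Cantelli then gives $\P$--a.s. uniform convergence of the paths, so the limit is c\`adl\`ag and optional, and Fatou's lemma yields convergence in norm. The vector- and matrix-valued versions (values in $\R^n$, $\R^{n\times d}$, and so on) follow in the same way.

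The main work is the weighted field spaces $\mathbb{S}^{2,2}_{\beta,\rho}$ and $\mathbb{H}^{2,2}_{\beta,\rho}$. I will give the argument once for an abstract Banach space $(B,\|\cdot\|_B)$ standing for $\mathbb{S}^2_\beta$ or $\mathbb{H}^2_\beta$. The space in question is
\[
C_\rho(\R^n;B)\coloneqq\Big\{U:\R^n\to B \text{ continuous}: \sup_{y\in\R^n}\rho(y)\|U^y\|_B^2<\infty\Big\},
\]
equipped with the norm $\|U\|_\rho\coloneqq\sup_y \rho(y)^{1/2}\|U^y\|_B$. First I check that this is a norm. Since $\rho>0$ everywhere, $\|U\|_\rho=0$ forces $U^y=0$ for every $y$; homogeneity and the triangle inequality are inherited pointwise from $\|\cdot\|_B$, after which one takes the supremum.

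For completeness, let $(U_m)_m$ be Cauchy for $\|\cdot\|_\rho$. On each closed ball $\bar B_R$ we have $\rho\ge(1+R^2)^{-k}>0$, so $\sup_{y\in\bar B_R}\|U^y_m-U^y_p\|_B\le(1+R^2)^{k/2}\|U_m-U_p\|_\rho$. Hence $(U^y_m)_m$ is Cauchy in $B$ for every $y$, uniformly on compact sets. Define $U^y$ as the pointwise limit in $B$. Being a locally uniform limit of continuous $B$-valued maps, $y\mapsto U^y$ is continuous. Next fix $\varepsilon>0$ and $M$ such that $\rho(y)^{1/2}\|U^y_m-U^y_p\|_B\le\varepsilon$ for all $y$ and all $m,p\ge M$. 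Letting $p\to\infty$ pointwise in $y$ gives $\rho(y)^{1/2}\|U^y_m-U^y\|_B\le\varepsilon$ uniformly in $y$, so $U_m\to U$ in $\|\cdot\|_\rho$. Moreover $\|U\|_\rho\le\|U_M\|_\rho+\varepsilon<\infty$, so $U$ belongs to the space.

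The one point needing care is that each element $U^y$ of a weighted space is an equivalence class in $B$, so a ``random field'' here means a $B$-valued map rather than a jointly measurable version. I would state explicitly that the definition is read this way. With that reading, no joint measurability or regularity in $(y,\omega)$ has to be preserved in the limit, and this pointwise-in-$y$ identification is the only subtle step. Combining the six factors then gives that $\mathcal{K}^{n,d}_\beta(\F,\P)$ is a Banach space.
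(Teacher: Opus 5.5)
Your proof is correct and follows essentially the same route as the paper. Both reduce to the six factors of a finite product, use the classical completeness of $\mathbb{S}^2_\beta$ and $\mathbb{H}^2_\beta$, and show that the weighted spaces of continuous $B$-valued fields are complete; the only difference is that the paper cites completeness of bounded continuous Banach-valued functions (implicitly via the isometry $U\mapsto\rho^{1/2}U$), whereas you write that argument out directly with the weight.
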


\begin{proof}
    The spaces $\mathbb{S}^2_\beta(\R,\F,\P)$ and $\mathbb{H}^2_\beta(\R^d,\F,\P)$ are standard spaces of square-integrable processes and are well-known to be Banach spaces (actually Hilbert spaces). The weighted spaces $\mathbb{S}^{2,2}_{\beta, \rho}(\R^n,\F,\P)$ (resp. $\mathbb{S}^{2,2}_{\beta, \rho}(\R^{n\times n},\F,\P)$) and $\mathbb{H}^{2,2}_{\beta, \rho}(\R^{n\times d},\F,\P)$ (resp. $\mathbb{H}^{2,2}_{\beta, \rho}(\R^{n\times n\times d},\F,\P)$) are defined as spaces of continuous functions $y \longmapsto U^y$ from $\R^n$ (resp. $\R^{n\times n}$) into the Banach spaces $\mathbb{S}^2_\beta(\R^n,\F,\P)$ (resp.  $\mathbb{S}^2_\beta(\R^{n\times n},\F,\P)$) and $\mathbb{H}^2_\beta(\R^{n\times d},\F,\P)$ (resp. $\mathbb{H}^2_\beta(\R^{n\times n\times d},\F,\P)$), equipped with a supremum norm weighted by $\rho(y)^{1/2}$. Since $\rho$ is strictly positive, these are weighted spaces of bounded continuous functions taking values in a Banach space. By standard functional analysis results, the space of bounded continuous functions from a topological space into a Banach space is itself a Banach space under the supremum norm. Since $\mathcal{K}_\beta(\F,\P)$ is a finite Cartesian product of Banach spaces, it is itself a Banach space.
\end{proof}

\medskip

To further motivate these spaces at this point, let us present the following lemma, that asserts that they hold \Cref{def:locallyuniformrandomfields}.
\begin{lemma}[Embedding of weighted spaces]\label{lemma:embedding_spaces}
    Let $\beta > 0$ and $k \ge 0$. Let $\mathcal{U} = (\mathcal{U}^y)_{y \in \R^\smalltext{n}}$ be a random field belonging to the weighted space $\mathbb{S}^{2,2}_{\beta, \rho}(\R^{n},\F,\P)$. Then, $\mathcal{U}$ belongs to the locally uniform space $\mathfrak{S}^2_{\rm loc}(\R^n,\F,\P)$. Similarly, $\mathbb{H}^{2,2}_{\beta, \rho}(\R^{n},\F,\P) \subset \mathfrak{H}^2_{\rm loc}(\R^n,\F,\P)$.
\end{lemma}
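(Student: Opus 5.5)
The argument has two ingredients, both elementary: continuity of $y\longmapsto\mathcal{U}^y$ into $\mathbb{S}^2(\R^n,\F,\P)$, and boundedness of $y\longmapsto\|\mathcal{U}^y\|_{\mathbb{S}^2(\R^n,\F,\P)}$ on compacts. Both follow from the equivalence of the $\beta$-weighted and unweighted norms, together with the fact that $\rho$ is bounded below on compact sets.

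\textbf{Step 1: norm equivalence.} Since $\beta>0$ and $t\in[0,T]$, we have $1\le \mathrm{e}^{\beta t}\le \mathrm{e}^{\beta T}$. Hence, for every process $Y$,
\[
\|Y\|^2_{\mathbb{S}^2(\R^n,\F,\P)}\le \|Y\|^2_{\mathbb{S}^2_\beta(\R^n,\F,\P)}\le \mathrm{e}^{\beta T}\|Y\|^2_{\mathbb{S}^2(\R^n,\F,\P)},
\]
and the analogous inequalities hold for $\mathbb{H}^2$ versus $\mathbb{H}^2_\beta$. In particular $\mathbb{S}^2_\beta$ and $\mathbb{S}^2$ coincide as sets with equivalent topologies; the same is true of $\mathbb{H}^2_\beta$ and $\mathbb{H}^2$. (The optional/progressive measurability requirements agree, since càdlàg adapted processes are optional.)

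\textbf{Step 2: continuity.} By the definition of $\mathbb{S}^{2,2}_{\beta,\rho}(\R^n,\F,\P)$, the map $y\longmapsto\mathcal{U}^y$ is continuous into $\mathbb{S}^2_\beta(\R^n,\F,\P)$. Composing with the continuous identity map into $\mathbb{S}^2(\R^n,\F,\P)$ from Step 1 gives continuity into $\mathbb{S}^2(\R^n,\F,\P)$.

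\textbf{Step 3: local boundedness.} Fix a compact $K\subset\R^n$ and choose $R$ with $K\subset \bar B_R$. For $y\in K$ we have $\rho(y)=(1+\|y\|^2)^{-k}\ge (1+R^2)^{-k}>0$ (this also covers $k=0$, where $\rho\equiv1$). Therefore
\[
\sup_{y\in K}\|\mathcal{U}^y\|^2_{\mathbb{S}^2(\R^n,\F,\P)}\le \sup_{y\in K}\|\mathcal{U}^y\|^2_{\mathbb{S}^2_\beta(\R^n,\F,\P)}\le (1+R^2)^{k}\,\|\mathcal{U}\|^2_{\mathbb{S}^{2,2}_{\beta,\rho}(\R^n,\F,\P)}<\infty.
\]
As a remark, continuity on the compact $K$ would already give boundedness there, but the explicit bound above also controls the seminorms $\sup_{y\in\bar B_R}\|\cdot\|$. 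It therefore shows that the inclusion is continuous for the topology of \Cref{def:locallyuniformrandomfields}.

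\textbf{Step 4: the $\mathbb{H}$ case.} The argument is identical with $\mathbb{H}^2_\beta$, $\mathbb{H}^2$ and $\mathbb{H}^{2,2}_{\beta,\rho}$ in place of their $\mathbb{S}$ counterparts. The only point needing care is bookkeeping rather than mathematics: the parameter in \Cref{def:locallyuniformrandomfields} is written as $y\in\R^d$ for $\mathfrak{H}^2_{\rm loc}$, whereas the parameter of a weighted field ranges over $\R^n$. I would read it as $y\in\R^n$, so that compacts $K\subset\R^n$ are used throughout. I do not expect any genuine obstacle.
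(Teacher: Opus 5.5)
Your proof is correct and follows essentially the same route as the paper: you compare norms via $1\le \mathrm{e}^{\beta t}$, use the lower bound $\rho(y)\ge(1+R^2)^{-k}$ on $\bar B_R$ to bound $\sup_{y\in K}\|\mathcal{U}^y\|^2_{\mathbb{S}^2}$ by $(1+R^2)^k\|\mathcal{U}\|^2_{\mathbb{S}^{2,2}_{\beta,\rho}}$, and treat the $\mathbb{H}$ case identically. Your continuity argument, through the norm-one inclusion $\mathbb{S}^2_\beta\hookrightarrow\mathbb{S}^2$, is actually the cleaner justification; the paper instead appeals to $\rho$ being bounded away from zero on compacts, which is not the relevant point.
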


\begin{proof}
    Let $\mathcal{U} \in \mathbb{S}^{2,2}_{\beta,\rho}(\R^{n},\F,\P)$. By definition, there exists a constant $C_{\mathcal{U}} < \infty$ such that
    \begin{equation}\label{eq:weighted_bound}
        \sup_{z \in \R^\smalltext{n}} \frac{\|\mathcal{U}^z\|^2_{\mathbb{S}^\smalltext{2}_\smalltext{\beta}(\R^\smalltext{n},\F,\P)}}{(1+\|z\|^2)^k} = C_{\mathcal{U}}.
    \end{equation}
    We must show that for any compact set $K \subset \R^n$, the standard $\mathbb{S}^2(\R^{n},\F,\P)$ norm is uniformly bounded. Let $K$ be an arbitrary compact subset of $\R^n$. Since $K$ is bounded, there exists a radius $R > 0$ such that $\|y\| \le R$ for all $y \in K$. First, we relate the $\beta$-weighted time norm to the standard norm. Since $t \in [0, T]$, we have $\mathrm{e}^{\beta t} \ge 1$. Thus, for any process $Y$
    \[ \|Y\|^2_{\mathbb{S}^\smalltext{2}(\R^\smalltext{n},\F,\P)} = \E^\P\bigg[\sup_{t \in [0, T]} \|Y_t\|^2\bigg] \le \E^\P\bigg[\sup_{t \in [0, T]} \mathrm{e}^{\beta t} \|Y_t\|^2\bigg] = \|Y\|^2_{\mathbb{S}^\smalltext{2}_\smalltext{\beta}(\R^\smalltext{n},\F,\P)}. \]
    
    Next, we handle the parameter weight. For any $y \in K$
    \begin{align*}
        \|\mathcal{U}^y\|^2_{\mathbb{S}^\smalltext{2}(\R^\smalltext{n},\F,\P)} \le \|\mathcal{U}^y\|^2_{\mathbb{S}^\smalltext{2}_\smalltext{\beta}(\R^\smalltext{n},\F,\P)} = (1+\|y\|^2)^k  \frac{\|\mathcal{U}^y\|^2_{\mathbb{S}^2_\beta(\R^\smalltext{n},\F,\P)}}{(1+\|y\|^2)^k} \le (1+R^2)^k  \sup_{z \in \R^\smalltext{n}} \bigg\{ \frac{\|\mathcal{U}^z\|^2_{\mathbb{S}^\smalltext{2}_\smalltext{\beta}(\R^\smalltext{n},\F,\P)}}{(1+\|z\|^2)^k} \bigg\} = (1+R^2)^k C_{\mathcal{U}}.
    \end{align*}
    The right-hand side is a finite constant independent of $y \in K$. Thus, $\sup_{y \in K} \|\mathcal{U}^y\|_{\mathbb{S}^\smalltext{2}(\R^\smalltext{n},\F,\P)} < \infty$. Continuity of $y \longmapsto \mathcal{U}^y$ in the standard norm follows immediately from the continuity in the weighted norm, as the weight function $(1+\|y\|^2)^{-k}$ is smooth and bounded away from zero on compacts. Therefore, $\mathcal{U} \in \mathfrak{S}^2_{\rm loc}(\R^n,\F,\P)$.
    
\medskip
The remaining result is proved in an analogous way.
\end{proof}

\section{Main results}\label{sec:mainresults}

In this section, we present the core theoretical contributions of this paper. We characterise the equilibrium strategies for state-dependent time-inconsistent control problems through a probabilistic approach. The roadmap will be as follows:
\begin{enumerate}
    \item[$(i)$] we first provide an informal derivation of the system of backward stochastic differential equations (BSDEs) that characterises the equilibrium, building intuition from the extended HJB equation;
    \item[$(ii)$] we then establish an extended dynamic programming principle (DPP), which generalises the Bellman principle by accounting for the changing preferences of the agent;
    \item[$(iii)$] we derive the BSDE system (as a necessary condition for equilibria) and prove a verification theorem (the sufficiency counterpart);
    \item[$(iv)$] we prove the well-posedness (existence and uniqueness) of this system.
\end{enumerate}

\subsection{An informal derivation of the BSDE system}

The purpose of this section is to informally justify the BSDE system that will be at the heart of this work. This derivation will be based on the extended HJB equation \cite[Definition 15.4]{bjork2021time}, and thus we will remain in the Markovian, feedback control (meaning we look for an equilibrium control $\alpha^\star$ that is a deterministic feedback function of the time and state, \emph{i.e.}, $\alpha^\star_t = \alpha^\star(t, X_t)$ for some Borel-measurable map $\alpha^\star$), and we will use the weak formulation all along. 

\medskip
For simplicity in this derivation, let $n=d=1$ and let the dynamics of the state process $(X_t)_{t\geq0}$ under $\P^{\alpha}$ be given by
\begin{equation}\label{strong formulation state process chapter 6}
    X_t = x_0 + \int_0^t \sigma(r, X_{ r})b(r, X_{ r}, \al_r)\mathrm{d}r + \int_0^t \sigma(r, X_{r})\mathrm{d}W^{\alpha}_r,\; t\in[0,T].
\end{equation}

Once again, the payoff functional is given by
\[
J(t, x, \alpha) \coloneqq  \E^{\P^{\smalltext{t},\smalltext{x},\smalltext{\alpha}}}\bigg[\int_{t}^T f(s, x, X_s, \al_s)\d s + \xi(x, X_T)\bigg],\; (t,x)\in[0,T]\times\R.
\]

For a fixed control $\alpha^\star$, we let $V(t, x) \coloneqq J(t, x, \alpha^\star)$ denote the equilibrium value function and $\mathcal{J}(t, x, y)$ denote the auxiliary value function with fixed preference parameter $y$, defined as
\[
    \mathcal{J}(t, x, y) \coloneqq \E^{\P^{\smalltext{t},\smalltext{x},\smalltext{\alpha}^\star}}\bigg[\int_{t}^T f(s, y, X_s, \alpha^\star_s)\d s + \xi(y, X_T)\bigg],\; (t,x,y)\in[0,T]\times\R\times\R.
\]

\medskip

According to the theory developed in \citeauthor{bjork2014theory} \cite{bjork2014theory}, the pair $(V, \mathcal{J})$ must satisfy the extended HJB system, which we present now particularised for our case.

\medskip

For any $(t, x) \in [0, T] \times \R^n$ and action $a \in A$, we define the infinitesimal generator $\mathcal{L}_t^a$ acting on smooth functions $\phi \in C^2(\R^n)$ by
\[
    \mathcal{L}_t^a \phi(x) \coloneqq b(t, x, a)  \sigma(t,x) \nabla_x \phi(x) + \frac{1}{2}\Tr\big[\sigma(t, x)\sigma(t, x)^\top \nabla_{xx}^2 \phi(x)\big].
\]

For $(t, x, y) \in [0, T) \times \R \times \R$, the system is
 \begin{equation}\label{eq:extended_HJB_system}
    \begin{cases}
   \displaystyle \partial_t V(t,x) + \sup_{a \in A} \Big\{ f(t, x, x, a) + b(t, x, a)\sigma(t,x)  \big( \partial_x V(t, x) - \partial_y \mathcal{J}(t, x, x) \big) \Big\}  +\frac{1}{2}\sigma^2(t, x)\partial_{xx}^2 V(t, x)\\[1em]
   \displaystyle \quad  - \sigma^2(t, x) \partial^2_{xy} \mathcal{J}(t, x, x) - \frac{1}{2}\sigma^2(t, x) \partial^2_{yy} \mathcal{J}(t, x, x) = 0,\\[1em]
 \displaystyle  \partial_t\mathcal{J}(t, x, y) + \mathcal{L}_t^{\alpha^\smalltext{\star}(t, x)} \mathcal{J}(t, x, y) + f(t, y, x, \alpha^\star(t, x))= 0,\\[0.5em]
 \displaystyle   V(T, x) = \xi(x,x),\; \mathcal{J}(T, x, y) = \xi(y,x).
\end{cases}
\end{equation}
The equilibrium control $\alpha^\star(t, x)$ is defined as the argument attaining the supremum in the first equation. Note that in the second equation, the generator $\mathcal{L}_t^{\alpha^\star(t, x)}$ acts on the variable $x$ with $y$ fixed.

\medskip
Note that the equilibrium control, which maximises the supremum in the first equation, appears in the second equation. Simultaneously, the function $\mathcal{J}(t, x, y)$ is part of the first equation. Hence, the system is very entangled and it is hard to determine its well-posedness using analytical techniques.

\medskip
Within the supremum in \eqref{eq:extended_HJB_system}, the effective gradient acting on the drift is not the standard $\partial_x V$, but the difference $\partial_x V - \partial_y \mathcal{J}$. This specific structure motivates the definition of our Hamiltonian below. The diffusion part includes the standard Hessian $\partial_{xx} V$ corrected by the mixed derivative $\sigma^2 \partial_{xy} \mathcal{J}$ and the parameter Hessian $\frac{1}{2}\sigma^2 \partial_{yy} \mathcal{J}$.

\medskip

To derive the BSDE system, we differentiate the second equation in \eqref{eq:extended_HJB_system} with respect to $y$ to find the dynamics of the derivatives $\mathcal{J}^y(t, x) \coloneqq \mathcal{J}(t, x, y)$. For $(t, x, y) \in [0, T) \times \R \times \R$
 \begin{equation}\label{eq:derivatives_J}
    \begin{cases}
 \displaystyle  \big(\partial_t + \mathcal{L}_t^{\alpha^\smalltext{\star}(t, x)}\big) \partial_y \mathcal{J}^y(t, x) + \partial_y f(t, y, x, \alpha^\star(t, x)) = 0, \\[0.5em]
 \displaystyle  \big(\partial_t + \mathcal{L}_t^{\alpha^\smalltext{\star}(t, x)}\big) \partial_{yy}^2 \mathcal{J}^y(t, x) + \partial_{yy}^2 f(t, y, x, \alpha^\star(t, x)) = 0.
\end{cases}
\end{equation}

We now define the stochastic processes corresponding to these quantities along the equilibrium trajectory $X_t$
 \begin{gather*}
 Y_t = V(t, X_t), \; Z_t = \sigma(t, X_t) \partial_x V(t, X_t),\; t\in[0,T],\\
 \partial Y^y_t = \partial_y \mathcal{J}(t, y, X_t), \; \partial Z^y_t = \sigma(t, X_t) \partial_{xy}^2 \mathcal{J}(t, y, X_t),\; t\in[0,T],\; y\in\R,\\
 \partial\partial Y^y_t = \partial_{yy}^2 \mathcal{J}(t, y, X_t), \; \partial\partial Z^y_t = \sigma(t, X_t) \partial_{xyy}^3 \mathcal{J}(t, y, X_t),\; t\in[0,T], \; y\in\R.
 \end{gather*}

 Applying It\^o's formula to $Y_t$, the drift is given by $(\partial_t + \Lc^{\alpha^\smalltext{\star}(t,\smallertext{X}_{\smalltext{t}})}) V(t,X_t)$. By rearranging the first equation of the extended HJB system, we can express this operator as
\begin{align*}
    \big(\partial_t + \Lc^{\alpha^\smalltext{\star}(t,X_\smalltext{t})}) V (t, X_t) &=  -f\big(t,X_t,X_t,\alpha^\star(t,X_t)\big) + b\big(t,X_t,\alpha^\star(t,X_t)\big) \sigma(t, X_t) \partial_y \mathcal{J}(t,X_t,X_t) +\sigma(t, X_t) \partial_{xy}^2 \mathcal{J}(t,X_t,X_t) \\
    &\quad+ \frac{1}{2}\sigma^2(t, X_t) \partial_{yy}^2 \mathcal{J}(t,X_t,X_t).
\end{align*}
Substituting the process definitions (\emph{e.g.}, $\sigma \partial_y \mathcal{J} = \sigma \partial Y^{\smallertext{X}_\smalltext{t}}$), the driver for $Y_t$ becomes
\[
    f(t,X_t,X_t,\alpha^\star(t,X_t)) - b(t,X_t,\alpha^\star(t,X_t)) (\sigma(t,X_t) \partial Y^{\smallertext{X}_\smalltext{t}}_t) - \sigma(t,X_t) \partial Z^{\smallertext{X}_\smalltext{t}}_t - \frac{1}{2}\sigma^2(t,X_t) \partial\partial Y^{\smallertext{X}_\smalltext{t}}_t.
\]
We define the extended Hamiltonian $H$ to encapsulate the maximisation problem. For arguments $(t,x,z,\gamma,\eta,\rho)$ representing $(t, X_t, Z_t, \partial Y_t, \partial\partial Y_t, \partial Z_t)$ in $\R$
\begin{equation}\label{eq:Hamiltonian_Def_Informal}
    H(t, x, z, \gamma, \eta, \rho) \coloneqq \sup_{a\in A} \big\{f(t,x, x, a) + b(t,x,a) (z - \sigma(t,x) \gamma)\big\} - \sigma(t,x) \rho -\frac{1}{2}\sigma^2(t,x) \eta.
\end{equation}
We assume, for simplicity in this expository section, that there exists a unique $A$-valued, Borel-measurable map $\mathcal{V}^\star$ satisfying the maximisation condition. The resulting BSDE system, under the reference measure $\P$, is
\begin{equation}\label{eq:bsde_weak_1d}
    \begin{cases}
   \displaystyle  Y_t = \xi(X_T, X_T) + \int_t^T H\big(r, X_r, Z_r, \partial Y_r^{\smallertext{X}_\smalltext{r}}, \partial\partial Y_r^{\smallertext{X}_\smalltext{r}}, \partial Z_r^{\smallertext{X}_\smalltext{r}}\big)\mathrm{d}r -\int_t^T Z_r \mathrm{d}W_r,\; t\in[0,T],\\[0.5em]
   \displaystyle   \partial Y^y_t = \partial_y \xi(y, X_T) + \int_t^T\Big( \partial_y f\big(r, y, X_r, \alpha^\star_r\big)  + \partial Z^y_r b(r, X_r, \alpha^\star_r)\Big) \mathrm{d}r  -\int_t^T \partial Z^y_r \mathrm{d}W_r, \; t\in[0,T],\; y\in\R^n,\\[0.5em]
   \displaystyle  \partial\partial Y^y_t = \partial^2_{yy} \xi(y, X_T) + \int_t^T \Big(\partial^2_{yy} f\big(r, y, X_r, \alpha^\star_r\big) + 
\partial\partial Z^y_r b(r, X_r, \alpha^\star_r) \Big)\mathrm{d}r 
 -\int_t^T \partial\partial Z^y_r \mathrm{d}W_r,\; t\in[0,T],\; y\in\R^n.
\end{cases}
\end{equation}

One might ask why the system requires three equations $($including the Hessian $\partial\partial Y)$ when the original problem is characterised by $V$ and $\mathcal{J}$. The reason lies in the second-order adjustment terms that appear in the equation for $V$. In the context of {\rm BSDEs}, the process $\partial Z^y$ carries the information of the mixed derivative $($specifically $\sigma \partial_{xy}^2 \mathcal{J})$. To write our system, we need the dynamics of the gradient $\partial Y^y$. However, as seen in \eqref{eq:derivatives_J}, the dynamics of the first derivative depends on the second derivatives, such as $\partial_{yy}^2 \mathcal{J}$. Therefore, to determine the evolution of the gradient, we must simultaneously use the Hessian process $\partial\partial Y^y$.

\subsection{Assumptions}

We require the following regularity assumptions for the validity of our main results.

\begin{assumption}[Regularity and growth of the coefficients]\label{Assumptions}
    We assume the following conditions on the problem data
    \begin{enumerate}
        \item[$(i)$] \emph{continuity:} the functions $b, \sigma, f,$ and $\xi$ are continuous in all their arguments$;$
        \item[$(ii)$] \emph{regularity of the state dynamics:} the drift $b: [0,T] \times \R^n \times A \to \R^d$ is Lipschitz-continuous with respect to the state variable $x$, uniformly in $(t, a)$. That is, there exists $K > 0$ such that for all $t \in [0,T]$, $a \in A$, and $(x, x^\prime) \in \R^n\times\R^n$
        \[
            \|b(t, x, a) - b(t, x', a)\| \le K \|x - x^\prime\|;
        \]
        \item[$(iii)$] \emph{regularity and growth of the cost:} 
        for every fixed $(t, x, a)$, the cost functions $y \longmapsto f(t, x, y, a)$ and $y \longmapsto \xi(y, x)$ belong to $C^2(\R^n)$. Moreover, the functions and their partial derivatives satisfy a polynomial growth condition. There exist constants $C > 0$ and $m \ge 1$ such that for all $(t, x, y, a)\in[0,T]\times\R^n\times\R^n\times A$
        \[
            |f(t, x, y, a)| + \|\nabla_y f(t, x, y, a)\| + \|\nabla_{yy}^2 f(t, x, y, a)\| + |\xi(y,x)| + \|\nabla_y \xi(y,x)\| + \|\nabla_{yy}^2 \xi(y,x)\| \le C\big(1 + \|x\|^m + \|y\|^m + \|a\|^m\big);
        \]
        \item[$(iv)$] \emph{integrability of the state:} for any admissible control $\alpha \in \mathcal{A}$ and any $p \ge 1$, the controlled state process $X$ admits finite moments of order $p$, uniformly in time
        \[
             \E^{\P^\smalltext{\alpha}}\bigg[ \sup_{t \in [0,T]} \|X_t\|^p \bigg] < \infty;
        \]

        \item[$(v)$] \emph{non-degeneracy:}  the diffusion matrix $\sigma: [0,T] \times \R^n \to \R^{n \times d}$ is bounded and full rank.
    \end{enumerate}
\end{assumption}
The Lipschitz-continuity of the coefficients ensures that the state process remains well-behaved under reasonable controls. We formalise this in the following lemma, which justifies the integrability of the polynomial costs.

\begin{lemma}[Moment estimates for the state process]\label{lem:state_moments}
    Let {\rm\Cref{Assumptions}}.$(ii)$ hold. Let $\alpha \in \mathcal{A}$ be an admissible control such that the  drift $b^\alpha(t, x) \coloneqq b(t, x, \alpha_t)$ satisfies the linear growth condition
    \[
        \|b^\alpha(t, x)\| \le C(1 + \|x\|), \; \forall (t, x) \in [0,T] \times \R^n.
    \]
    This holds, for instance, if $\alpha$ is bounded or is a linear feedback control as in the {\rm LQR} case. Then, for any $p \ge 1$, the state process $X$ admits finite moments of order $p$ under the controlled measure $\P^\alpha$, uniformly in time
    \[
        \E^{\P^\smalltext{\alpha}}\bigg[ \sup_{t \in [0,T]} \|X_t\|^p \bigg] < \infty.
    \]
\end{lemma}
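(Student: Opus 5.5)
Under $\P^\alpha$ the canonical process satisfies
\[
X_t=x_0+\int_0^t\sigma(r,X_r)b^\alpha(r,X_r)\,\mathrm{d}r+\int_0^t\sigma(r,X_r)\,\mathrm{d}W^\alpha_r,
\]
with $W^\alpha$ an $(\F,\P^\alpha)$--Brownian motion. The plan is a standard Burkholder--Davis--Gundy plus Gronwall estimate, made rigorous by localisation. It suffices to treat $p\ge 2$, since lower moments then follow from Jensen's inequality.

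\textbf{Step 1 (localisation).} For $N\in\N^\star$ set $\tau_N\coloneqq\inf\{t\ge0:\|X_t\|\ge N\}\wedge T$. This is an $\F$-stopping time because $X$ is continuous, and $\tau_N\uparrow T$ pathwise. Define
\[
\phi_N(t)\coloneqq\E^{\P^\alpha}\Big[\sup_{s\le t\wedge\tau_N}\|X_s\|^p\Big],
\]
which is finite since the stopped process is bounded by $\max(N,\|x_0\|)$.

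\textbf{Step 2 (BSG/BDG estimate).} Use $\|a+b+c\|^p\le 3^{p-1}(\|a\|^p+\|b\|^p+\|c\|^p)$ on the three terms of the decomposition.
\begin{itemize}
\item The drift term is controlled by H\"older's inequality, boundedness of $\sigma$ (\Cref{Assumptions}.$(v)$) and the linear growth of $b^\alpha$:
\[
\sup_{s\le t\wedge\tau_N}\Big\|\int_0^s\sigma b^\alpha\,\mathrm{d}r\Big\|^p\le C T^{p-1}\int_0^{t\wedge\tau_N}\big(1+\|X_r\|^p\big)\,\mathrm{d}r.
\]
\item The stochastic integral is handled by the Burkholder--Davis--Gundy inequality under $\P^\alpha$. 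Its $p$-th moment is bounded by $C_p\E^{\P^\alpha}\big[(\int_0^{t\wedge\tau_N}\|\sigma\|^2\mathrm{d}r)^{p/2}\big]\le C_p\|\sigma\|_\infty^pT^{p/2}$, a constant, again because $\sigma$ is bounded.
\end{itemize}
Combining the two bounds gives
\[
\phi_N(t)\le C\Big(1+\|x_0\|^p+\int_0^t\phi_N(r)\,\mathrm{d}r\Big),
\]
with $C$ independent of $N$.

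\textbf{Step 3 (Gronwall and limit).} Gronwall's lemma yields $\phi_N(T)\le C(1+\|x_0\|^p)\mathrm{e}^{CT}$ uniformly in $N$. Monotone convergence as $N\to\infty$ then gives the claim.

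The only delicate point is the a priori finiteness needed for Gronwall, which the localisation in Step 1 supplies. One should also note that BDG is applied under $\P^\alpha$, where $W^\alpha$ is a Brownian motion by Girsanov, so no change-of-measure moment issues arise. The Lipschitz assumption \Cref{Assumptions}.$(ii)$ is not actually needed beyond the growth bound. For the two examples cited in the statement: a bounded $\alpha$ gives linear growth via $(ii)$ and continuity in $a$ on a compact set, while a linear feedback gives linear growth directly.
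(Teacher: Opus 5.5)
Your proof is correct and follows essentially the paper's approach. The paper simply defers to the standard SDE moment estimate in Karatzas--Shreve, and your localisation--BDG--Gronwall argument under $\P^\alpha$ is exactly the proof behind that estimate, written out for the possibly path-dependent drift $\sigma b^\alpha$ (and, as you note, it uses only the growth bound on $b^\alpha$ and the boundedness of $\sigma$ from the standing setting, not the Lipschitz property in \Cref{Assumptions}.$(ii)$).
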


\begin{proof}
    This is a standard result in the theory of stochastic differential equations. Under the linear growth condition on the drift $b^\alpha$ and the diffusion $\sigma$ (implied by \Cref{Assumptions}.$(ii)$), the existence of moments of all orders follows from standard estimates, such as those in \cite[Theorem 5.2.2.9]{karatzas1991brownian}.
\end{proof}

\subsection{The extended dynamic programming principle}

As with all time-inconsistent problems, the classical Bellman principle fails because the cost functional changes with the state as time advances. However, we manage to prove an equality we call extended dynamic programming principle that resembles a classical DPP, and in fact implies it in the absence of $x$ in the reward functional.

\begin{theorem}[Extended dynamic programming principle]\label{thm: extendedDPP}
 Let {\rm\Cref{Assumptions}} hold and let $\alpha^\star\in \Ec$ be an equilibrium control.
 Then, for any $t\in[0,T]$, for all $s \in[0,t]$ and $x\in \mathbb{R}^n$, we have
 \begin{equation} \label{eq:extended_dpp}
 \begin{aligned}
       v(s, x) &= \sup_{\alpha \in \Ac} \E^{\mathbb{P}^{\smalltext{s},\smalltext{x}, \smalltext{\alpha}}}\Bigg[v(t, X_t) + \int_s^t \Bigg(f(r,  X_r, X_r, \al_r) \\
       &\quad -b(r, X_r, \al_r) \cdot \sigma(r, X_r)^\top \E^{\mathbb{P}^{\smalltext{r}, \smalltext{X}_\tinytext{r},\smalltext{\alpha}^\tinytext{\star}}}\bigg[\nabla_{y}\xi(X_r, X_T) + \int_r^T\nabla_{y} f\big(u, X_r, X_u, \alpha^\star_u\big) \d u\bigg] \\
       &\quad -\Tr\Bigg[\sigma(r, X_r)\sigma(r, X_r)^\top \E^{\mathbb{P}^{\smalltext{r}, \smalltext{X}_\tinytext{r}, \smalltext{\alpha}^\tinytext{\star}}}\bigg[\nabla^2_{yx}\xi(X_r, X_T) + \int_r^T\nabla^2_{yx} f\big(u, X_r, X_u, \alpha^\star_u\big) \d u\bigg]\Bigg] \\
       &\quad -\frac{1}{2}\Tr\Bigg[\sigma(r, X_r)\sigma(r, X_r)^\top \E^{\mathbb{P}^{\smalltext{r}, \smalltext{X}_\tinytext{r},\smalltext{\alpha}^\tinytext{\star}}}\bigg[\nabla^2_{yy}\xi(X_r, X_T) + \int_r^T\nabla^2_{yy} f\big(u, X_r, X_u, \alpha^\star_u\big) \d u\bigg]\Bigg]\Bigg)\d r\Bigg].
 \end{aligned}
 \end{equation}
Furthermore, the equilibrium control $\alpha^\star$ attains the supremum in \eqref{eq:extended_dpp}.
\end{theorem}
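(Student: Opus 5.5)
We prove \eqref{eq:extended_dpp} by decomposing the equilibrium value along the trajectory with the It\^o--Kunita--Wentzell formula, and then use the equilibrium property of $\alpha^\star$ to handle the supremum. Fix $s\le t$, $x\in\R^n$ and an arbitrary $\alpha\in\Ac$. Under $\P^{s,x,\alpha}$ we consider the random field
\[
\mathcal{J}(r,X_r,y)\coloneqq \E^{\P^{r,X_r,\alpha^\star}}\bigg[\int_r^T f(u,y,X_u,\alpha^\star_u)\d u+\xi(y,X_T)\bigg],\quad y\in\R^n.
\]
Its diagonal is $\mathcal{J}(r,X_r,X_r)=v(r,X_r)$. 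By \Cref{Assumptions}.$(iii)$ and the moment bounds of \Cref{lem:state_moments}, we may differentiate under the expectation twice in $y$. This gives
\[
\nabla_y\mathcal{J}(r,X_r,y)=\E^{\P^{r,X_r,\alpha^\star}}\bigg[\nabla_y\xi(y,X_T)+\int_r^T\nabla_yf\,\d u\bigg],
\]
and similarly for the second derivative, with locally uniform polynomial bounds.

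The key step is to write, for each fixed $y$, the semimartingale decomposition of $r\longmapsto\mathcal{J}(r,X_r,y)$ under $\P^{s,x,\alpha}$. Under $\P^{\alpha^\star}$, martingale representation applied to $\mathcal{J}(r,X_r,y)+\int_s^r f(u,y,X_u,\alpha^\star_u)\d u$ gives a martingale $\int Z^y\cdot\d W^{\alpha^\star}$. Changing measure to $\P^{\alpha}$ gives the drift
\[
-f(r,y,X_r,\alpha^\star_r)+Z^y_r\cdot\big(b(r,X_r,\alpha_r)-b(r,X_r,\alpha^\star_r)\big).
\]
Apply the same procedure to the $y$-derivatives, obtaining $(\partial Y^y,\partial Z^y)$ and $(\partial\partial Y^y,\partial\partial Z^y)$ as in \eqref{eq:bsde_weak_1d}. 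The derivative fields are continuous in $y$ in the $\mathbb{S}^2/\mathbb{H}^2$ sense, and $\partial Z^y=\nabla_y Z^y$. With these, the It\^o--Kunita--Wentzell formula can be applied to $\mathcal{J}(r,X_r,X_r)$. It produces three extra drift terms besides the $y=X_r$ evaluation of the drift above:
\begin{itemize}
\item the first-order term $\nabla_y\mathcal{J}\cdot\sigma b(\alpha_r)$;
\item the cross-variation term $\Tr[\sigma\,\partial Z^{X_r}]$, which is identified with $\Tr[\sigma\sigma^\top\nabla^2_{yx}\mathcal{J}]$ as in the statement;
\item the term $\tfrac12\Tr[\sigma\sigma^\top\nabla^2_{yy}\mathcal{J}]$.
\end{itemize}
We then take $\P^{s,x,\alpha}$-expectations between $s$ and $t$, after localising and using the integrability to remove the martingale part. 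Rearranging gives, for every $\alpha$,
\[
v(s,x)=\E^{\P^{s,x,\alpha}}\Big[v(t,X_t)+\int_s^t\big(\Phi^\alpha_r+\Delta_r(\alpha)\big)\d r\Big].
\]
Here $\Phi^\alpha_r$ is exactly the integrand of \eqref{eq:extended_dpp}. The term $\Delta_r(\alpha)$ collects $f(\cdot,\alpha^\star)-f(\cdot,\alpha)$ and the drift differences multiplied by $(Z-\sigma^\top\nabla_y\mathcal{J})$; it is a pointwise Hamiltonian gap $G(r,X_r,\alpha^\star_r)-G(r,X_r,\alpha_r)$, with
\[
G(r,x,a)\coloneqq f(r,x,x,a)+b(r,x,a)\cdot\big(Z_r-\sigma^\top\nabla_y\mathcal{J}\big).
\]
For $\alpha=\alpha^\star$ we have $\Delta\equiv0$, which gives equality and shows that $\alpha^\star$ attains the value.

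It remains to show $\Delta_r(\alpha)\ge0$, i.e.\ that $\alpha^\star_r$ maximises $G(r,X_r,\cdot)$ a.e. This is where the equilibrium property is used. Applying the identity above on $[r,r+\ell]$ to the spike control $\alpha\otimes_{r+\ell}\alpha^\star$ gives
\[
J(r,X_r,\alpha^\star)-J(r,X_r,\alpha\otimes_\ell\alpha^\star)=\E\Big[\int_r^{r+\ell}\Delta_u\,\d u\Big].
\]
\Cref{Def:equilibrium} bounds this below by $-\varepsilon\ell$ for $\ell<\ell_\varepsilon$. Dividing by $\ell$, letting $\ell\downarrow0$ (Lebesgue differentiation, using continuity from \Cref{Assumptions}.$(i)$ and constant actions $a\in A$), and then letting $\varepsilon\downarrow0$ yields $G(r,X_r,\alpha^\star_r)\ge G(r,X_r,a)$ for a.e.\ $r$ and all $a$ in a countable dense subset of $A$, hence for all $a\in A$. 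Therefore the supremum over $\Ac$ is attained by $\alpha^\star$.

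\textbf{Main obstacle.} The main obstacle is the rigorous justification of the It\^o--Kunita--Wentzell step in weak formulation with only $C^2$ regularity in $y$. This requires showing that the field $y\longmapsto(\partial Y^y,\partial Z^y)$ is a genuine continuous semimartingale field with the required derivative relations and local integrability. We would obtain these from $L^2$-differentiability of BSDE solutions with respect to the parameter, combined with Kolmogorov-type continuity estimates on compacts, and with localisation by stopping times to control the polynomial growth.
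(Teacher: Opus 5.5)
Your proof is correct at the paper's level of rigour, but it is organised differently. The paper never writes an exact identity for a deviating control. It applies the $\varepsilon\ell$-inequality of \Cref{Def:equilibrium} on each cell of a partition of $[s,t]$, with the preference frozen at $X_{t_i}$ (\Cref{lemma:local_comparison}), and then telescopes (\Cref{lemma:iterated_inequality}). Only in the limit does it use It\^o's formula on $\Psi(\cdot,\cdot;X_{t_i})$ and the It\^o--Kunita--Wentzell formula on $v(r,X_r)$, to turn $\sum_i[\Psi(t_{i+1},X_{t_{i+1}};X_{t_i})-\Psi(t_{i+1},X_{t_{i+1}};X_{t_{i+1}})]$ into the three adjustment integrals (\Cref{lemma:convergence}). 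The supremum follows from having the inequality for every $\alpha$ and equality for $\alpha^\star$. The Hamiltonian maximisation is deduced only afterwards, from the DPP, in \Cref{thm:necessity}.

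You instead run the diagonal calculus once, in continuous time. This gives an exact identity for every $\alpha$ whose defect is precisely the Hamiltonian gap $\Delta(\alpha)$. You then use the equilibrium property only infinitesimally, through one spike variation, to obtain the maximisation condition, and the DPP becomes a corollary. This reverses the logical order of \Cref{thm: extendedDPP} and \Cref{thm:necessity}. It removes the discretisation and the bookkeeping of accumulated $\varepsilon\ell$ errors, and yields \eqref{eq:optimality_condition} directly from \Cref{Def:equilibrium}.

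The price is that your spike limit needs regularity in time at a fixed instant. You need right-continuity of $u\longmapsto\alpha^\star_u$ (which admissibility gives) and a version of $Z^y_u$ continuous in $(u,y)$. The paper's passage to the limit needs only continuity in the preference parameter, for dominated convergence on the Riemann sums. Both arguments rely on smoothness of $\mathcal{J}$ that \Cref{Assumptions} does not provide, as you rightly flag.

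Two corrections.

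\textbf{The spike identity is not exact.} $J(r,X_r,\alpha\otimes_\ell\alpha^\star)$ involves $\mathcal{J}(r+\ell,X_{r+\ell},X_r)$, not $v(r+\ell,X_{r+\ell})$. What you actually obtain, from the fixed-$y$ decomposition with $y=X_r$ (no It\^o--Kunita--Wentzell needed), is $\E[\int_r^{r+\ell}\tilde\Delta_u\,\mathrm{d}u]$, where $\tilde\Delta_u$ is your gap with the preference frozen at $X_r$. This differs from $\E[\int_r^{r+\ell}\Delta_u\,\mathrm{d}u]$ by $o(\ell)$ and coincides with $\Delta$ at $u=r$, so your conclusion stands. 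However, the limit is a right-continuity argument at $u=r$, not Lebesgue differentiation, because both the measure $\P^{r,X_r,a}$ and the frozen parameter move with $r$.

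\textbf{The mixed term cannot be matched to the statement as written.} The cross-variation term that It\^o--Kunita--Wentzell produces is $\Tr[\sigma(\partial Z^{X_r}_r)^\top]=\Tr[\sigma\sigma^\top\nabla^2_{xy}\mathcal{J}(r,X_r,X_r)]$, a derivative of a conditional expectation with respect to the initial state. In general it is not $\E^{\P^{r,X_r,\alpha^\star}}[\nabla^2_{yx}\xi(X_r,X_T)+\int_r^T\nabla^2_{yx}f\,\mathrm{d}u]$. That expression ignores how the flow and $\alpha^\star$ depend on the starting point, and \Cref{Assumptions}.$(iii)$ does not even provide $x$-derivatives of $\xi$ and $f$. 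So the identification ``as in the statement'' cannot be justified. Your argument, like the paper's, proves the third line with $\nabla^2_{xy}\mathcal{J}$ in place of that expectation; the paper's final ``differentiating under the expectation'' step has the same defect.
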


The three last rows represent the cost of time-inconsistency: the drift in value caused solely by the updating of preferences along the path. This result is the main building block for the rest of the theory developed in this paper. See \Cref{Appendix:ddp} for the proof.

\subsection{A necessity result}

We recall that, for a fixed equilibrium control $\alpha^\star$, we will very often use the following notation
\[
\mathcal{J}(t, x, y) \coloneqq  \E^{\P^{\smalltext{t}, \smalltext{x}, \smalltext{\alpha}^\tinytext{\star}}} \bigg[ \int_t^T f(u,    y, X_u, \alpha^\star_u) \d u + \xi(y, X_T) \bigg],\; t\in[0,T],\; y\in\R^n.
\]

In other words, $\mathcal{J}(t, x, y)$ represents the payoff under the equilibrium control if we were to freeze the parameter $y$. The next theorem guarantees that smooth equilibrium controls implicitly define solutions to \eqref{BSDE Weak Formulation}.

\medskip

Using the extended DPP, we can formally characterise the equilibrium via the system of BSDEs \eqref{BSDE Weak Formulation}. We identify the scalar value process $Y_t = v(t, X_t)$, the gradient vector process $\partial Y^y_t = \partial_y \mathcal{J}(t, X_t, y)$, and the Hessian matrix process $\partial\partial Y^y_t = \partial^2_{yy} \mathcal{J}(t, X_t, y)$.
\begin{equation}\label{BSDE Weak Formulation}
    \begin{cases}
 
    \displaystyle Y_t = \xi(X_T, X_T) + \int_t^T H\big(r, X_r, Z_r, \partial Y_r^{\smallertext{X}_\smalltext{r}}, \partial\partial Y_r^{\smallertext{X}_\smalltext{r}}, \partial Z_r^{\smallertext{X}_\smalltext{r}}\big)\d r -\int_t^T Z_r  \d W_r, \; t\in[0,T].\\
    \displaystyle \partial Y^y_t = \nabla_y \xi(y, X_T) + \int_t^T \Big( \nabla_y f(r,   y, X_r, \alpha^\star_r)  +   \partial Z^y_r b(r, X_r, \alpha^\star_r)\Big)\d r - \int_t^T \partial Z^y_r  \d W_r, \; t\in[0,T],\; y\in\R^n,\\
    \displaystyle 
    \partial\partial Y^y_t = \nabla^2_{yy} \xi(y, X_T) + \int_t^T \Big( \nabla^2_{yy} f(r,   y, X_r, \alpha^\star_r) +  \partial\partial Z^y_r b(r, X_r, \alpha^\star_r) \Big)\d r - \int_t^T \partial\partial Z^y_r  \d W_r,\; t\in[0,T],\; y\in\R^n.
    \end{cases}
\end{equation}

\medskip

Here, the extended Hamiltonian $H$ is defined to match the variables introduced in the informal derivation. For a state $x \in \R^n$, it takes as arguments the co-state $z \in \R^d$, the parameter gradient $\gamma \in \R^n$, the parameter Hessian $\eta \in \Mc_{n}(\R)$, and the mixed consistency term $\rho \in \Mc_{n, d}(\R)$

\begin{equation}\label{eq:Hamiltonian_Def}
    H(t, x, z, \gamma, \eta, \rho) \coloneqq  \sup_{a \in A} \big\{ f(t, x, x, a) + b(t, x, a) \cdot (z - \sigma(t, x)^\top \gamma) \big\} - \frac{1}{2}\Tr\big[\sigma(t, x)\sigma(t, x)^\top \eta\big] - \Tr\big[\sigma(t, x) \rho^\top \big].
\end{equation}

\begin{remark}[Dimensionality of the adjoint processes]
    Let us clarify the dimensions of the processes appearing in the system \eqref{BSDE Weak Formulation}. Let us recall that the state process $X$ takes values in $\R^n$ and the Brownian motion $W$ in $\R^d$.
    \begin{itemize}
        \item \emph{Value process:} $Y$ is scalar-valued in $\R$. Its volatility $Z$ takes values in $\R^d$.
        \item \emph{Gradient process:} $\partial Y$ takes values in $\R^n$ (representing $\nabla_y \mathcal{J}$). Its volatility $\partial Z$ is defined as a matrix in $\R^{n \times d}$.
        This specific dimension is required by the Hamiltonian term $\Tr[\sigma \rho]$ in \eqref{eq:Hamiltonian_Def}. Since $\sigma \in \R^{n \times d}$, the variable $\rho$ $($identified with $\partial Z)$ must be in $\R^{n \times d}$ for the product $\sigma \rho^\top$ to be a square matrix in $\R^{n \times n}$.
        \item \emph{Hessian process:} $\partial\partial Y$ takes values in $\R^{n \times n}$ $($representing $\nabla^2_{yy} \mathcal{J})$. Consequently, its volatility $\partial\partial Z$ is a rank-3 tensor in $\R^{n \times n \times d}$, representing the sensitivity of each entry of the Hessian matrix to the $d$ components of the Brownian motion.
    \end{itemize}
\end{remark}

\begin{remark}[Consistency with the classical theory]
    The Hamiltonian defined in \eqref{eq:Hamiltonian_Def} includes the terms involving $\gamma$, $\eta$, and $\rho$, which differ from the standard Hamiltonian in time-consistent stochastic control. These terms represent the \emph{inconsistency adjustment}. Indeed, consider a standard time-consistent problem where the cost functions $f$ and $\xi$ do not depend on the parameter $y$. In this case, the auxiliary value function $\mathcal{J}(t, x, y)$ is independent of $y$, implying that the derivatives $\nabla_y \mathcal{J}$, $\nabla^2_{yy} \mathcal{J}$, and $\nabla^2_{xy} \mathcal{J}$ vanish. Consequently, the inputs $\gamma$, $\eta$, and $\rho$ are zero, and the Hamiltonian reduces to
    \[
        H(t, x, z, 0, 0, 0) = \sup_{a \in A} \big\{ f(t, x, a) + b(t, x, a) \cdot z \big\}.
    \]
    Thus, we recover the standard Hamiltonian from the classical stochastic control theory.
\end{remark}

Let us define what we mean by the solution to such a system.
\begin{definition}\label{def:bsde_solution}
    We say that $(Y, Z, \partial Y, \partial Z, \partial\partial Y, \partial\partial Z)$ is a solution to the system \eqref{BSDE Weak Formulation} if
    \begin{enumerate}
        \item[$(i)$] the system of equations \eqref{BSDE Weak Formulation} holds $\P${\rm--a.s.}$;$
        \item[$(ii)$] the value process and its control satisfy the standard integrability
        \[ 
        Y \in \mathbb{S}^2(\R,\mathbb{F}, \P),\; Z \in \mathbb{H}^2(\R^d,\mathbb{F}, \P); 
        \]
        \item[$(iii)$] the derivative random fields belong to the locally uniform spaces. That is, for any $\psi \in \{ \partial Y, \partial\partial Y\}$ and $\phi \in \{ \partial Z, \partial\partial Z\}$
        \[ \psi \in \mathfrak{S}^2_{\mathrm{loc}}(\R^{k_1},\F,\P),\; \phi \in \mathfrak{H}^2_{\mathrm{loc}}(\R^{k_2},\F,\P), 
        \]
        where $k_1$ and $k_2$ represent the right dimensions of the derivative random fields.
    \end{enumerate}
\end{definition}

In other words, we ask the processes to be in the classical spaces for the solution of BSDEs, but we additionally ask that the norms of the families indexed by the parameter $y$ are uniformly bounded in the sense of the norm of convergence over compact subsets. Compared with the definition of solution given in \citeauthor{hernandez2023me} \cite{hernandez2023me}, where the space in which the uni-parametric family took values was already compact, we need to consider a weaker norm. 
\begin{theorem}[Necessity]\label{thm:necessity}
    Let {\rm\Cref{Assumptions}} hold and let $\alpha^\star \in \Ac$ be an equilibrium control in the sense of {\rm\Cref{Def:equilibrium}}. Assume that the equilibrium value function $V(t, x) \coloneqq  J(t, x, \alpha^\star)$ belongs to $C^{1,2}([0,T) \times \R^n) \cap C^0([0,T] \times \R^n)$ and the parametric function 
    \[
    \mathcal{J}(t, x, y) \coloneqq  \E^{\P^{\smalltext{t}, \smalltext{x}, \smalltext{\alpha}^\tinytext{\star}}}\bigg[\int_t^T f(s,   y, X_s, \alpha^\star_s) \d s + \xi(y, X_T)\bigg],
    \]
    belongs to $C^{1,2, 2}([0,T) \times \R^n \times \R^n) \cap C^0([0,T] \times \R^n \times \R^n)$. Then, the processes $(Y, Z, \partial Y, \partial Z, \partial\partial Y, \partial\partial Z)$ defined by
  \begin{gather*}
    Y_t \coloneqq  V(t, X_t), \; Z_t \coloneqq   \nabla_x V(t, X_t)\sigma(t, X_t),\; t\in[0,T],\\
    \partial Y^y_t \coloneqq  \nabla_y \mathcal{J}(t, X_t, y), \; \partial Z^y_t \coloneqq   \nabla_{yx}^2 \mathcal{J}(t, X_t, y)\sigma(t, X_t),\; t\in[0,T],\; y\in\R^n,\\
    \partial\partial Y^y_t \coloneqq  \nabla^2_{yy} \mathcal{J}(t, X_t, y), \; \partial\partial Z^y_t \coloneqq   \nabla^3_{xyy} \mathcal{J}(t, X_t, y)\sigma(t, X_t), \; t\in[0,T],\; y\in\R^n,
    \end{gather*}
    provided they belong to the suitable spaces stated in {\rm\Cref{def:bsde_solution}}, solve the {\rm BSDE} system \eqref{BSDE Weak Formulation}. Furthermore, $\alpha^\star$ satisfies the optimality condition
    \begin{equation}\label{eq:optimality_condition}
        \alpha^\star_t \in \underset{a \in A}{\mathrm{argmax}} \Big\{ f(t, X_t, X_t, a) + b(t, X_t, a) \cdot \big( Z_t - \sigma(t, X_t)^\top \partial Y^{\smallertext{X}_\smalltext{t}}_t \big) \Big\},\; \mathrm{d}t\otimes\P\text{\rm--a.e.}
    \end{equation}
\end{theorem}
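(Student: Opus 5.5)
The plan is to verify each BSDE in \eqref{BSDE Weak Formulation} by applying It\^o's formula under the reference measure $\P$ to the smooth functions $V$ and $\mathcal{J}$ evaluated along $X$. The drifts will then be identified using two ingredients: the Feynman--Kac structure of $\mathcal{J}$ for the $\partial Y$ and $\partial\partial Y$ equations, and the extended DPP of \Cref{thm: extendedDPP} for the $Y$ equation.

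\textbf{Step 1: the parametric equations.} Fix $y\in\R^n$. By the Markov property of $X$ under $\P^{\alpha^\star}$, the process
\[
M^y_t\coloneqq \mathcal{J}(t,X_t,y)+\int_0^t f(u,y,X_u,\alpha^\star_u)\d u
\]
is an $(\F,\P^{\alpha^\star})$-martingale. Differentiation under the expectation in $y$ is justified by the polynomial growth in \Cref{Assumptions}.$(iii)$, the moment bounds in \Cref{Assumptions}.$(iv)$, and dominated convergence. It follows that $\nabla_y\mathcal{J}(t,X_t,y)+\int_0^t\nabla_yf\,\d u$ and $\nabla^2_{yy}\mathcal{J}(t,X_t,y)+\int_0^t\nabla^2_{yy}f\,\d u$ are also $\P^{\alpha^\star}$-martingales, with terminal values $\nabla_y\xi(y,X_T)$ and $\nabla^2_{yy}\xi(y,X_T)$ (using continuity at $T$). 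Since $\nabla_y\mathcal{J}$ is $C^{1,2}$ in $(t,x)$, It\^o's formula shows that its martingale part is $\nabla^2_{yx}\mathcal{J}\,\sigma\,\d W^{\alpha^\star}$. Substituting $\d W^{\alpha^\star}=\d W-b(r,X_r,\alpha^\star_r)\d r$ gives exactly the $\partial Y^y$ equation with $\partial Z^y$ as defined, and the $\partial\partial Y^y$ equation follows in the same way. Where third derivatives are not assumed, I would instead use martingale representation under $\P^{\alpha^\star}$ to produce $\partial\partial Z$ and identify it with the stated expression on the smooth part.

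\textbf{Step 2: the value equation.} It\^o's formula gives
\[
\d V(t,X_t)=(\partial_t+\mathcal{L}^{\alpha^\star_t}_t)V\,\d t+Z_t\,\d W^{\alpha^\star}_t.
\]
Next I would use \Cref{thm: extendedDPP} with $s=t$ and $t=t+h$, evaluated both at $\alpha^\star$, which attains the supremum, and at constant controls $a\in A$ on $[t,t+h]$. Dividing by $h$ and letting $h\downarrow0$ requires continuity of the integrand in $r$, which comes from the continuity assumptions and the $C^{1,2,2}$ regularity of $\mathcal{J}$. This yields the pointwise relation
\[
\partial_tV+\sup_{a}\big\{\mathcal{L}^a_tV+f(t,x,x,a)-b\cdot\sigma^\top\nabla_y\mathcal{J}(t,x,x)\big\}-\mathrm{Tr}[\sigma\sigma^\top\nabla^2_{yx}\mathcal{J}]-\tfrac12\mathrm{Tr}[\sigma\sigma^\top\nabla^2_{yy}\mathcal{J}]=0,
\]
with the supremum attained at $\alpha^\star(t,x)$. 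Here I identify the conditional expectations in \eqref{eq:extended_dpp} with $\nabla_y\mathcal{J}(r,X_r,X_r)$, $\nabla^2_{yx}\mathcal{J}(r,X_r,X_r)$ and $\nabla^2_{yy}\mathcal{J}(r,X_r,X_r)$, once more by differentiating under the expectation. The identification of the trace terms with $\mathrm{Tr}[\sigma\rho^\top]$ at $\rho=\partial Z^{X_r}_r$, and of the remaining terms with $\eta=\partial\partial Y^{X_r}_r$, $\gamma=\partial Y^{X_r}_r$, then shows that the drift of $Y$ under $\P$ is $-H(r,X_r,Z_r,\partial Y^{X_r}_r,\partial\partial Y^{X_r}_r,\partial Z^{X_r}_r)$. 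The terminal condition is $V(T,x)=\xi(x,x)$. The optimality condition \eqref{eq:optimality_condition} is read off from the attainment of the supremum.

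\textbf{Main obstacle.} I expect the main difficulty to be the passage from the integral DPP to the pointwise HJB-type identity. The reason is that the DPP holds for deterministic initial points under $\P^{s,x,\alpha}$ rather than along the path, and one must both choose the competitors and justify the limit $h\downarrow0$. My approach is to first establish the inequality for constant $a$ and the equality for $\alpha^\star$ at every $(s,x)$, using continuity in $(s,x)$. I would then transfer the resulting identity to the process level as $\d t\otimes\P$-a.e. statements, using that $\alpha^\star$ is a feedback control, with equality in the supremum holding a.e. A secondary technical point is checking that the mixed derivative term produced by the DPP matches $\mathrm{Tr}[\sigma(\nabla^2_{yx}\mathcal{J}\sigma)^\top]$ with the paper's transposition conventions. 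Membership in the spaces of \Cref{def:bsde_solution} is assumed in the statement.
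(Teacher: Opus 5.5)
Your proposal is correct in outline. It reaches the value equation and the optimality condition by a different route from the paper.

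\textbf{Parametric equations.} The paper invokes Feynman--Kac for $\mathcal J(\cdot,\cdot,y)$, differentiates the linear PDE in $y$, reads off the BSDE under $\P^{\alpha^\star}$, and then changes measure. That silently needs $\partial_t\nabla_y\mathcal J$ and $\nabla_y\nabla^2_{xx}\mathcal J$. You differentiate the martingale $\mathcal J(t,X_t,y)+\int_0^t f\,\d u$ in $y$ and identify its volatility by It\^o's formula or martingale representation. This asks for less regularity, although your claim that $\nabla_y\mathcal J$ is $C^{1,2}$ in $(t,x)$ is not literally part of the $C^{1,2,2}$ hypothesis, so your representation fallback is the safer version.

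\textbf{Value equation and optimality.} The paper uses \Cref{lemma:martingale_property}. Since $M^{\alpha^\star}$ is a $\P^{\alpha^\star}$-martingale, its drift vanishes. Since $M^\alpha$ is a $\P^\alpha$-supermartingale for every admissible $\alpha$, its drift is non-positive. Subtracting the two drift relations at the same $(t,X_t)$ leaves only the first-order terms, which gives \eqref{eq:optimality_condition}. You instead localise \Cref{thm: extendedDPP} at deterministic points with constant competitors, which yields a pointwise extended-HJB inequality at every $(s,x)$ and every $a\in A$.

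\textbf{What your route buys.}
\begin{itemize}
\item You only use the DPP inequality at deterministic initial conditions with Markovian competitors. This avoids the conditional DPP for path-dependent $\alpha$ that the supermartingale claim actually requires, and which the paper does not prove.
\item You obtain the inequality for all $a$ simultaneously. You therefore do not need a countable-density argument to pass from ``for each admissible $\alpha$, a.e.'' to ``a.e., for all $a\in A$''.
\end{itemize}

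\textbf{What the paper's route buys.}
\begin{itemize}
\item It involves no limit $h\downarrow 0$ under the expectation. In your approach that limit requires uniform integrability of $\partial_tV$, $\nabla^2_{xx}V$ and the $\mathcal J$-derivatives along $X$, none of which have assumed growth bounds. Alternatively it requires a localisation that a DPP stated only at deterministic horizons does not directly provide.
\item It obtains the equality part along the path with no regularity of the feedback, because a continuous finite-variation local martingale is constant.
\end{itemize}

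On that last point, do not try to get the equality for $\alpha^\star$ ``at every $(s,x)$'' by the same limiting procedure. The feedback $\alpha^\star(t,\cdot)$ is only determined up to null sets and need not be continuous in $x$. Take the equality from the martingale property of $M^{\alpha^\star}$, exactly as the paper does, and combine it with your pointwise inequality for constant $a$. This gives the argmax and the identification of the drift of $Y$ as $-H$, $\d t\otimes\P$-a.e.
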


The proof can be found in \Cref{Appendix:Necessity}.

\medskip

\begin{remark}[Structure of the inconsistency adjustment]\label{remark:necessity_clarification}
    In the optimality condition above, it is important to note that the auxiliary function $\mathcal{J}(t, x, y)$ and the process $\partial Y_t^{\smallertext{X}_\smalltext{t}}$ are defined for a fixed equilibrium strategy $\alpha^\star$.
\end{remark}
\subsection{Verification theorem}

We now present the verification theorem, which states that a solution to the derived BSDE system, satisfying the Hamiltonian maximisation condition, yields an equilibrium control.
\begin{theorem}[Verification]\label{thm:verification}
    Let {\rm\Cref{Assumptions}} hold. Assume there exists a solution $(Y, Z, \partial Y, \partial Z, \partial\partial Y, \partial\partial Z)$ to the system \eqref{BSDE Weak Formulation} in the sense of {\rm\Cref{def:bsde_solution}}.

\medskip

    Define the candidate feedback control process $\alpha^\star = (\alpha^\star_t)_{t \in [0, T]}$ by the condition that it maximises the extended Hamiltonian
    \begin{equation}\label{eq:verification_argmax}
        \alpha^\star_t \in \underset{a \in A}{\mathrm{argmax}} \big\{ f (t, X_t, X_t, a) + b(t, X_t, a) \cdot \big( Z_t - \sigma(t, X_t)^\top \partial Y^{\smallertext{X}_\smalltext{t}}_t \big) \big\}, \; \d t \otimes \d \P\text{\rm--a.e.}
    \end{equation}
    
    Suppose further that
    \begin{enumerate}
        \item[$(i)$] the control process $\alpha^\star$ is admissible, \emph{i.e.}, $\alpha^\star \in \Ac;$
        \item[$(ii)$] the function $v(t, x)$ identified with $Y$ via $Y_t = v(t, X_t)$ belongs to $C^{1,2}([0,T) \times \R^n) \cap C^0([0,T] \times \R^n)$.
    \end{enumerate}
    Then, $\alpha^\star$ is an equilibrium control, and $Y_t$ is the associated value process, \emph{i.e.}, $Y_t = J(t, X_t, \alpha^\star)$.
\end{theorem}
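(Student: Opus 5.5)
The proof proceeds in three stages: first identify the solution of \eqref{BSDE Weak Formulation} with the equilibrium payoffs, then compute the first-order effect of a short deviation, and finally show that this effect is nonpositive up to $o(\ell)$, uniformly enough to meet \Cref{Def:equilibrium}.

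\textbf{Step 1 (identification).} For each fixed $y$, the second equation of \eqref{BSDE Weak Formulation} is a linear BSDE. Write it under $\P^{\alpha^\star}$, using $\mathrm{d}W=\mathrm{d}W^{\alpha^\star}+b(r,X_r,\alpha^\star_r)\mathrm{d}r$; the drift term $\partial Z^y b$ then cancels. Taking conditional expectations gives
\[
\partial Y^y_t=\E^{\P^{\alpha^\star}}\Big[\nabla_y\xi(y,X_T)+\int_t^T\nabla_y f(r,y,X_r,\alpha^\star_r)\,\mathrm{d}r\,\Big|\,\Fc_t\Big].
\]
The same argument applies to $\partial\partial Y^y$. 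Now define $\mathcal{J}^y_t$ as the conditional expectation of $\xi(y,X_T)+\int_t^T f(r,y,X_r,\alpha^\star_r)\,\mathrm{d}r$. By the polynomial growth in \Cref{Assumptions} and the $\mathfrak{S}^2_{\rm loc}$ continuity, differentiation under the expectation is justified, so $\nabla_y\mathcal{J}^y=\partial Y^y$ and $\nabla^2_{yy}\mathcal{J}^y=\partial\partial Y^y$. The derivative fields are therefore the genuine parameter derivatives of the frozen-preference payoff.

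Next, apply the It\^o--Kunita--Wentzell formula to the random field $y\mapsto \mathcal{J}^y_t$, evaluated along $y=X_t$. Its drift is
\[
-f(t,X_t,X_t,\alpha^\star_t)+\partial Y^{X_t}_t\cdot\sigma b^\star+\Tr[\sigma(\partial Z^{X_t}_t)^\top]+\tfrac12\Tr[\sigma\sigma^\top\partial\partial Y^{X_t}_t],
\]
and its martingale part is $(Z^{\mathcal{J}}+\sigma^\top\partial Y)\mathrm{d}W$. Since $\alpha^\star$ attains the supremum in $H$, this drift coincides with the driver of the first equation, and the terminal values agree ($\xi(X_T,X_T)$). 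Uniqueness for Lipschitz BSDEs in $(Y,Z)$ then yields $Y_t=\mathcal{J}^{X_t}_t=J(t,X_t,\alpha^\star)$. This identification of the diagonal is the hardest step. The IKW formula needs the field to be a $C^2$ semimartingale in $y$ with continuous local characteristics, which must be extracted from the locally uniform spaces of \Cref{def:bsde_solution}. I would first mollify in $y$ and pass to the limit using the $\mathfrak{S}^2_{\rm loc}$ bounds together with Kolmogorov continuity.

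\textbf{Step 2 (deviation).} Fix $t$, $\ell$ and $\alpha\in\Ac$, and set $\tau=t+\ell$. Condition at $\tau$: after $\tau$ the control is $\alpha^\star$, so $J(t,X_t,\alpha\otimes_\ell\alpha^\star)$ equals the expectation under $\P^{t,X_t,\alpha}$ of $\int_t^\tau f(r,X_t,X_r,\alpha_r)\,\mathrm{d}r+\mathcal{J}^{X_t}_\tau$ (using \Cref{Thm:Concatenated:M}). Apply It\^o's formula to $v(r,X_r)$ on $[t,\tau]$ under $\P^\alpha$; this is legitimate by $(ii)$, with $Z=\nabla_x v\,\sigma$. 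Apply IKW to $\mathcal{J}^{y}_r$ at both $y=X_r$ and $y=X_t$. Combining these, the difference $J(t,X_t,\alpha^\star)-J(t,X_t,\alpha\otimes_\ell\alpha^\star)$ equals the expectation of
\[
\int_t^\tau\Big(h(r,X_r,\alpha^\star_r)-h(r,X_r,\alpha_r)\Big)\,\mathrm{d}r+R_\ell,
\qquad h(r,x,a)\coloneqq f(r,x,x,a)+b(r,x,a)\cdot(Z_r-\sigma^\top\partial Y^{x}_r).
\]
The remainder $R_\ell$ collects the differences between the $y=X_t$ and $y=X_r$ evaluations of $f$, $\partial Y$, $\partial Z$ and $\partial\partial Y$ over $[t,\tau]$.

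\textbf{Step 3 (conclusion).} The first integral is nonnegative by \eqref{eq:verification_argmax}. For the remainder, continuity in $y$ in the locally uniform norms, continuity of the paths and polynomial moments (\Cref{lem:state_moments}) give $|R_\ell|\le\ell\,\omega(\ell)$ with $\omega(\ell)\to0$. A uniform bound over $t$ and over $\P$-a.e.\ paths is then obtained by localising on $\{\sup\|X\|\le R\}$ and using the Blumenthal zero--one law. This shows $\ell_\varepsilon>0$, so $\alpha^\star\in\Ec$.
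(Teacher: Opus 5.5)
Your skeleton is the paper's. You identify $Y$ with the diagonal $\mathcal{J}^{X_t}_t$ of the frozen-preference payoff via the It\^o--Kunita--Wentzell formula and uniqueness for the BSDE with $\alpha^\star$ frozen. You then split a deviation on $[t,t+\ell]$ into a signed Hamiltonian gap plus an $o(\ell)$ remainder. Two of your technical choices are more careful than the paper's.

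In Step~1, the conditional-expectation representation followed by differentiation under the integral actually proves $\nabla_y\mathcal{J}^y=\partial Y^y$ and $\nabla^2_{yy}\mathcal{J}^y=\partial\partial Y^y$. The paper only asserts this ``by uniqueness''.

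In Step~2 you expand $\mathcal{J}^{X_t}_r$ (frozen parameter) and $\mathcal{J}^{X_r}_r$ (moving parameter), so every stochastic integral is adapted. The paper instead expands the $\Fc_{t+\ell}$-measurable field $y\mapsto\mathcal{Y}^{y,\alpha^\star}_{t+\ell}$ along $(X_r)_{r\in[t,t+\ell]}$. That is an anticipating composition: its zero-mean claim is unjustified, and it never generates the cross-variation term $\Tr[\sigma(\partial Z^{X_r}_r)^\top]$ that must cancel the last term of $H$.

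In your route the $C^{1,2}$ part of hypothesis $(ii)$ is in fact superfluous. $J(t,x,\alpha\otimes_\ell\alpha^\star)-v(t,x)$ is the $\P^{t,x,\alpha}$-expectation of $\int_t^{\tau}f(r,x,X_r,\alpha_r)\,\mathrm{d}r+\mathcal{J}^x_\tau-\mathcal{J}^x_t$. The identity $Z_r=Z^{\mathcal{J},X_r}_r+\sigma^\top\partial Y^{X_r}_r$ delivered by Step~1 is all you need to make the Hamiltonian gap appear.

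The gap is in Step~3, and it has two parts.

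First, the remainder. When you carry out the computation, the $\partial Y$, $\partial\partial Y$ and trace terms cancel exactly. What remains in $R_\ell$ is $f(r,X_t,X_r,a)-f(r,X_r,X_r,a)$ for $a\in\{\alpha_r,\alpha^\star_r\}$, together with $(Z^{\mathcal{J},X_t}_r-Z^{\mathcal{J},X_r}_r)\cdot\big(b(r,X_r,\alpha_r)-b(r,X_r,\alpha^\star_r)\big)$. The latter is an increment of the \emph{martingale integrand} of the level field. Bounding it by $\ell\,\omega(\ell)$ requires control of $\partial Z^y=\nabla_yZ^{\mathcal{J},y}$ uniformly over the random segment between $X_t$ and $X_r$, that is, with the supremum in $y$ inside the expectation. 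The $\mathfrak{H}^2_{\mathrm{loc}}$ norms of \Cref{def:bsde_solution} put it outside, so you must import the extra regularity you extract for IKW in Step~1.

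Second, and more seriously, the uniformity argument fails. \Cref{Def:equilibrium} needs one deterministic $\ell_\varepsilon$ valid for every $\alpha\in\Ac$, every $t$ and $\P$-a.e.\ path. Under \Cref{Assumptions} your $\omega(\ell)$ has constants that grow polynomially in $\|X_t\|$. Since $A$ may be unbounded, they also depend on $\alpha$ through $f(\cdot,\alpha)$, $b(\cdot,\alpha)$ and the $\P^\alpha$-moments of $X$. Because $\P[\sup_t\|X_t\|>R]>0$ for every $R$ when $\sigma$ is non-degenerate, localising on $\{\sup\|X\|\le R\}$ only produces a path-dependent threshold. The Blumenthal zero--one law concerns $\Fc_0$ and does not bridge this.

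The paper does not overcome this either. Its proof stops at $J(t,x,\alpha\otimes_\ell\alpha^\star)-v(t,x)\le o(\ell)$ for fixed $(t,x,\alpha)$, which is only the pointwise $\liminf$ property from the remark after the definition. To reach the statement as formulated you need hypotheses that make the $o(\ell)$ uniform: bounded $A$, bounded coefficients and $y$-derivatives, and a uniform bound on $\partial Z$. Otherwise, claim only the pointwise version.
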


 The proof can be found in \Cref{Appendix:verification}.

 \begin{remark}[Existence of a measurable equilibrium feedback]\label{rem:measurable_selection}
    In the statement of {\rm\Cref{thm:verification}}, we defined the candidate control $\alpha^\star$ via the maximisation of the Hamiltonian, assuming that an admissible, measurable selection of the argmax exists. Let us briefly mention why assuming this is perfectly reasonable in our setting.
    \medskip
    
    Consider the set-valued map $\Phi: [0,T] \times \R^n \times \R^d \times \R^n \rightrightarrows A$ defined by the set of maximisers
    \[
        \Phi(t, x, z, \gamma) \coloneqq \underset{a \in A}{\mathrm{argmax}} \big\{ f(t, x, x, a) + b(t, x, a) \cdot (z - \sigma(t, x)^\top\gamma) \big\}.
    \]
    Under {\rm\Cref{Assumptions}}, the coefficients $b, \sigma,$ and $f$ are continuous in all arguments. 
    Consequently, the function being maximised is jointly continuous in $((t, x, z, \gamma), a)$, which implies that the map $\Phi$ has a measurable graph and takes closed values. 
    \medskip
    
    Since the action space $A$ is a closed subset of a Polish space $($and assuming the maximum is attained, \emph{e.g.}, if $A$ is compact or under suitable coercivity conditions$)$, the \emph{Kuratowski--Ryll--Nardzewski selection theorem} $($or rather, a corollary of it, see, \emph{e.g.}, {\rm\cite[Theorem 17.18]{aliprantis2006infinite}}$)$ guarantees the existence of a Borel-measurable function $\mathcal{V}^\star: [0,T] \times \R^n \times \R^d \times \R^n \longrightarrow A$ such that $\mathcal{V}^\star(t, x, z, \gamma) \in \Phi(t, x, z, \gamma)$ for all inputs.
    Defining the process $\alpha^\star_t \coloneqq \mathcal{V}^\star(t, X_t, Z_t, \partial Y^{\smallertext{X}_\smalltext{t}}_t)$ yields an $\F$-predictable control candidate.
\end{remark}

 \subsection{Well-posedness of the solution}
 \label{sec:wp}

We finish with a result guaranteeing existence of solutions in the sense of \Cref{def:bsde_solution}. We first define the driver functions $G_1$ and $G_2$ corresponding to the second and third equations of the system \eqref{BSDE Weak Formulation}.
We denote the arguments by $(t, x, y, z, \gamma, v, \mathfrak{v})$, where $z$ represents the volatility of the value process $Z_t$, $\gamma$ represents the inconsistency term $\partial Y_t^{X_t}$, and $v, \mathfrak{v}$ represent the derivative volatilities $\partial Z^y_t$ and $\partial\partial Z^y_t$, respectively.
\begin{align*}
    G_1(t, x, y, z, \gamma, v) &\coloneqq \nabla_y f(t, y, x, \alpha^\star) + v b(t, x, \alpha^\star), \;
G_2(t, x, y, z, \gamma, \mathfrak{v}) \coloneqq \nabla_{yy}^2 f(t, y, x, \alpha^\star) + \mathfrak{v} b(t, x, \alpha^\star),
\end{align*}

where $\alpha^\star \coloneqq \mathcal{V}^\star(t, x, z, \gamma)$ (see \Cref{rem:measurable_selection}).

\begin{assumption}[Drivers integrability and  regularity]\label{ass:poly_growth}
    Let $\Theta \coloneqq (z, \mathfrak{u}, v, \mathfrak{v})$ be the vector of inputs for the drivers $($representing the $Z$, $\partial Y$, $\partial Z$, and $\partial \partial Z$ components respectively$)$. We assume there exist a constant $C > 0$ such that:
    \begin{enumerate}
        \item[$(i)$] \emph{Regularity of the Hamiltonian driver $H$.} The driver of the value process satisfies a Lipschitz-continuity condition. For any $t, x$ and inputs $\Theta$, $\Theta^\prime$
        \[
            |H(t, x, \Theta) - H(t, x, \Theta^\prime)| \le C \|\Theta - \Theta^\prime\|.
        \]

        \item[$(ii)$] \emph{Structure of the derivative drivers $G \in \{G_1, G_2\}$.} The drivers for the gradient and Hessian processes satisfy a Lipschitz-continuity condition. For any parameter $y$ and input vector $\Theta$
         \[
            |G(t, x, y, \Theta) - G(t, x, y, \Theta^\prime)| \le C \|\Theta - \Theta^\prime\|.
        \]

        \item[$(iii)$] \emph{Integrability of source terms.} The terminal conditions and the drivers evaluated at the null input vector $\Theta^0 \coloneqq 0$ satisfy the following integrability requirements
\begin{itemize}
    \item \emph{value process source:} the diagonal terminal cost and the base Hamiltonian are square-integrable
    \[
    \mathbb{E}^{\mathbb{P}}\bigg[ \big|\xi(X_T, X_T)\big|^2 + \int_0^T \big|H(t, X_t, \Theta^0)\big|^2 \mathrm{d}t \bigg] < \infty;
    \]
    \item \emph{derivative fields source:} the parameter-dependent source terms have finite weighted norms
    \[
    \sup_{y \in \mathbb{R}^n} \rho(y) \mathbb{E}^{\mathbb{P}}\bigg[ \big|\nabla_y \xi(y, X_T)\big|^2 + \big|\nabla_{yy}^2 \xi(y, X_T)\big|^2 + \int_0^T \sum_{i=1}^2 \big|G_i(t, X_t, y, \Theta^0)\big|^2 \mathrm{d}t \bigg] < \infty.
    \]
\end{itemize}

    \end{enumerate}
\end{assumption}

Note that the integrability of the state process $X$ is already guaranteed by {\rm\Cref{Assumptions}}.$(iv)$, which is essential to ensure that these polynomial bounds result in integrable random variables. Now we are able to state our uniqueness and existence result.

\begin{theorem}[Well-posedness]\label{thm:wellposedness}
    Under {\rm \Cref{Assumptions,ass:poly_growth}}, there exists a weighting parameter $\beta > 0$ such that the {\rm BSDE} system \eqref{BSDE Weak Formulation} admits a {unique solution} in the weighted space $\mathcal{K}_\beta$. Consequently, this solution also satisfies the conditions of {\rm \Cref{def:bsde_solution}}.
\end{theorem}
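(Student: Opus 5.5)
The plan is a Banach fixed-point argument on $\mathcal{K}_\beta^{n,d}(\F,\P)$, which is complete by \Cref{prop:BanachSpace}. Define a map $\Phi$ as follows. Given an input $\Theta=(y,z,u,v,U,\mathfrak{v})\in\mathcal{K}_\beta$, freeze all drivers at the input, namely $H(r,X_r,z_r,u^{X_r}_r,U^{X_r}_r,v^{X_r}_r)$ and $G_i(r,X_r,y',z_r,u^{X_r}_r,\cdot)$ evaluated at the input's volatility fields. Let $\Phi(\Theta)=(Y,Z,\partial Y,\partial Z,\partial\partial Y,\partial\partial Z)$ be the unique solution of the resulting decoupled family of BSDEs with these frozen drivers. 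For each fixed $y'$, these are linear BSDEs with given square-integrable data, so the martingale representation property for $(\F,\P)$ recalled in \Cref{sec:setting} gives existence and uniqueness of $(Y^{y'},Z^{y'})\in\mathbb{S}^2_\beta\times\mathbb{H}^2_\beta$. Membership in the weighted spaces then follows from the standard a priori estimate
\[
\|Y\|^2_{\mathbb{S}^2_\beta}+\|Z\|^2_{\mathbb{H}^2_\beta}\le C\,\E^\P\Big[|\xi|^2+\int_0^T \mathrm{e}^{\beta t}|g_t|^2\,\d t\Big],
\]
multiplied by $\rho(y')$ and combined with \Cref{ass:poly_growth}.$(iii)$ and the Lipschitz bounds to control the driver at $\Theta$ by the driver at $\Theta^0$ plus $C\|\Theta\|$. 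Continuity of $y'\mapsto \partial Y^{y'}$ in $\mathbb{S}^2_\beta$ also follows from the same stability estimate, applied to the difference of two parameters and using continuity of $\nabla_y f,\nabla^2_{yy}\xi$ in $y$ with polynomial domination and dominated convergence.

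The contraction step applies the stability estimate to the difference of two images. The standard Itô computation on $\mathrm{e}^{\beta t}|\delta Y_t|^2$ with Young's inequality gives
\[
\|\delta Y\|^2_{\mathbb{S}^2_\beta}+\|\delta Z\|^2_{\mathbb{H}^2_\beta}\le \frac{C}{\beta}\,\E\int_0^T \mathrm{e}^{\beta t}|\delta g_t|^2\,\d t,
\]
and likewise, uniformly in $y'$ after weighting by $\rho(y')$, for the derivative fields. By \Cref{ass:poly_growth}.$(i)$--$(ii)$, $|\delta g_t|\le C\|\delta\Theta_t\|$. Choosing $\beta$ large makes $\Phi$ a contraction for the $\mathcal{K}_\beta$ norm, and its fixed point is the unique solution. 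The embedding \Cref{lemma:embedding_spaces}, together with $\mathbb{S}^2_\beta\subset\mathbb{S}^2$, then yields the conditions of \Cref{def:bsde_solution}.

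The main obstacle is the diagonal evaluation $u^{X_r}_r$ (and similarly $U^{X_r}_r$, $v^{X_r}_r$) in the $Y$ and $G_i$ drivers. The norm on $\mathcal{K}_\beta$ only controls $\sup_{y}\rho(y)\|u^y\|^2$ for deterministic $y$, whereas the Lipschitz bound produces $\E\int_0^T \mathrm{e}^{\beta t}|\delta u^{X_t}_t|^2\,\d t$, with a random parameter. I would first try to bound this using joint measurability and continuity of $y\mapsto u^y$, conditioning or freezing $X_t$ on a countable partition of space, and using the moments of $X$ from \Cref{Assumptions}.$(iv)$ and \Cref{lem:state_moments} to absorb the weight $\rho(X_t)^{-1}=(1+\|X_t\|^2)^k$. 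This is where $2k\ge m$ and the polynomial structure enter. The resulting bound has the form $C\,\E[(1+\|X\|_\infty^{2k})\cdot]$ times the weighted norm. If a direct pointwise-in-$y$ bound fails, I would instead enlarge the norm with a diagonal component $\E\int \mathrm{e}^{\beta t}\rho(X_t)|u^{X_t}_t|^2\,\d t$, or use a Kolmogorov-type sup-over-compact estimate, and still close the contraction with large $\beta$.
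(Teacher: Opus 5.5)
You follow the same fixed-point scheme as the paper: freeze the inputs in the drivers, solve the decoupled BSDEs, use the $C/\beta$ estimate from It\^o's formula on $\mathrm{e}^{\beta t}|\delta Y_t|^2$, weight by $\rho(y)$ and take $\sup_y$, then apply Banach's fixed-point theorem and \Cref{lemma:embedding_spaces}. You even check continuity of $y\longmapsto\partial Y^y$, which the paper skips. The genuine gap is the step you flag yourself and leave open. The contraction requires $\E\int_0^T\mathrm{e}^{\beta t}\|\delta u^{X_t}_t\|^2\,\mathrm{d}t\le C\|\delta u\|^2$, and likewise for the $\partial\partial Y$ and $\partial Z$ slots. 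For the $\mathcal{K}_\beta$-norm this is false: that norm controls $\rho(y)\|u^y\|^2$ only for each \emph{deterministic} $y$, and says nothing about $y\longmapsto u^y_t(\omega)$ pathwise.

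Here is a counterexample. Take $n=d=1$, $\sigma\equiv1$ (so $X=x_0+W$ under $\P$), and $b\equiv0$, $f\equiv0$, $\xi\equiv0$, so all assumptions hold. Let $\psi_\varepsilon(z)\coloneqq(1-|z|/\varepsilon)^+$ and $v^y_t\coloneqq\psi_\varepsilon(X_t-y)$.
\begin{itemize}
\item The map $y\longmapsto v^y$ is continuous into $\mathbb{H}^2_\beta$.
\item By the Gaussian density of $X_t$, $\rho(y)\|v^y\|^2_{\mathbb{H}^2_\beta}\le\mathrm{e}^{\beta T}\int_0^T\P[|X_t-y|<\varepsilon]\,\mathrm{d}t\le C_{\beta,T}\,\varepsilon$, uniformly in $y$.
\item Yet $v^{X_t}_t=1$ for every $t$.
\end{itemize}
Since $\partial Z^{X_r}_r$ enters $H$ through $-\Tr[\sigma\rho^\top]$, consider the input with $v$ in the $\partial Z$-slot and all else zero. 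It is at $\mathcal{K}_\beta$-distance $O(\sqrt{\varepsilon})$ from $0$, yet under your map the two outputs satisfy $\delta Y_t=-(T-t)$. So $\Phi$ is not even continuous on $\mathcal{K}_\beta$, whatever $\beta$. Worse, the diagonal is not well defined on equivalence classes. The field $v^y_t+\mathbf{1}_{\{X_t=y\}}$ is the same element of $\mathbb{H}^2_\beta$ as $v^y$ for every $y$, but it shifts the diagonal by $1$.

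Your first remedy cannot close this. Partitioning space into cells $B_j$, freezing $y_j\in B_j$ and absorbing $\rho(X_t)^{-1}$ with moments of $X$ yields terms $\E[\|u^{y_j}_t\|^2\mathbf{1}_{\{X_t\in B_j\}}]$. The norm bounds these only by $\rho(y_j)^{-1}\|u\|^2$, not by $\P[X_t\in B_j]\rho(y_j)^{-1}\|u\|^2$, so nothing sums. The target bound $C\,\E[1+\|X\|_\infty^{2k}]\,\|u\|^2$ is exactly what the example refutes.

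This is also where the paper's own proof is deficient. It asserts the ``pointwise bound'' $\|u^y_r\|^2\le\rho(y)^{-1}\|u\|^2$ and substitutes $y=X_r$, but that inequality holds only after taking $\E\sup_r$ at a fixed $y$. The analogous step is harmless when the parameter is deterministic time, as in \cite{hernandez2023me}, since $\E[\|u^t_t\|^2]\le\sup_s\E[\sup_r\|u^s_r\|^2]$.

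Your two fallbacks point in the right direction but are not yet arguments.
\begin{itemize}
\item Adding a diagonal component to the norm forces you to estimate the output's diagonal $\delta U^{X_t}_t=\E[\int_t^T\delta g^{\eta}_s\,\mathrm{d}s\,|\,\Fc_t]_{\eta=X_t}$. This involves the inputs at the frozen random parameter $X_t$ on all of $[t,T]$, so the norm would have to be a supremum over $\Fc_t$-measurable parameters. Moreover, the volatility diagonal $\partial Z^{X_t}_t$ is the trace on $\{s=t\}$ of $(t,s)\longmapsto\partial Z^{X_t}_s$, which estimates integrated in $\mathrm{d}s$ do not control; this is the familiar $Z(t,t)$ difficulty for BSVIEs.
\item A Kolmogorov sup-over-compacts bound needs $L^p$ H\"older estimates in $y$ with $p>n$, for the outputs and for their differences. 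That requires one more $y$-derivative, and higher moments, of both the data and the input fields. At the Hessian level this is unavailable, since $\nabla^2_{yy}f$ and $\nabla^2_{yy}\xi$ are merely continuous in $y$.
\end{itemize}
As it stands, the contraction is not established, and so neither is existence and uniqueness in $\mathcal{K}_\beta$.
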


We remark that \Cref{ass:poly_growth} imposes strong Lipschitz-continuity requirements, and that the inconsistent linear--quadratic regulator is not covered by our result. Our point here is to present a general well-posedness result, and demonstrate the kind of techniques and spaces that are necessary to consider. We believe that a result where $H$, $G_1$ and $G_2$ have a stochastic Lipschitz coefficient proportional to $1+ \|X_t\|^2 + \|Z_t\|$ (which is exactly what is required to cover the linear--quadratic example) is achievable and we leave it as an open problem for future research. We will content ourselves here to mention that the literature on BSDEs whose generators have BMO Lipschitz-continuity constants, see \citeauthor*{imkeller2012differentiability} \cite{imkeller2012differentiability}, or quadratic BSVIEs, see \citeauthor*{hernandez2023quadratic} \cite{hernandez2023quadratic}, should be a good starting point.

\begin{remark}[Dependency of the functional spaces on the driver's growth]\label{rem:weight}
    The definition of the weighted space $\mathcal{K}_\beta$ involving the polynomial weight $\rho(y) \coloneqq (1+\|y\|^2)^{-k}$ is not intrinsic to the general theory but is a specific choice made to accommodate the polynomial growth as the one we have on the LQR case.

\end{remark}

\section{An example: the linear--quadratic time-inconsistent regulator}\label{sec:LQR}

After introducing all our results, we present a full study of a time-inconsistent problem whose inconsistency comes fully from the presence of the current state variable in the reward functional.

\subsection{Problem setting}\label{sec:LQRsetting}

We consider the linear--quadratic regulator (LQR) problem with a state-dependent terminal cost, a classical example in the literature of time-inconsistent control (see \citeauthor*{bjork2021time} \cite{bjork2021time, hu2017time}).
For simplicity, we take the dimension of the state process to be $n=d=1$.
The state process $X$ evolves according to the linear dynamics
\begin{equation}
    \d X_t = \big(\bar{a}X_t + \bar{b}\al_t\big)\d t + \sigma \d W_t, \; X_0 = x_0.
\end{equation}
The objective is to minimise the squared distance of the terminal state from the current state, penalised by the control effort.
Hence, the cost functional is given by
\begin{equation}\label{eq:lossLQR}
    J(t, x, \alpha) \coloneqq  \E^{\mathbb{P}^{\smalltext{t}, \smalltext{x}, \smalltext{\alpha}}}\bigg[\int_t^T \frac{1}{2} \al_s^2 \d s + \frac{\Gamma}{2} (X_T-x)^2\bigg].
\end{equation}
Here, we identify $f(t,  y, x, a) = \frac{1}{2}a^2$ and $\xi(y, x) = \frac{\Gamma}{2}(x-y)^2$.
The appearance of the current state $x$ in the terminal cost $\xi$ creates the time-inconsistency.

\begin{example}[Motivation: the political economy of debt management]
\label{ex:gov_debt}
Consider a government managing its national debt ratio $X$. The dynamics are governed by the interest rate gap $\bar{a}$ $($growth rate of debt$)$ and fiscal adjustments $\alpha$ $($surplus/deficit spending$)$
\[
\mathrm{d}X_t =\big (\bar{a}X_t + \bar{b}\alpha_t\big)\mathrm{d}t + \sigma \mathrm{d}W_t.
\]
The government aims at minimising the cost of fiscal interventions (tax distortions), represented by $\frac{1}{2}\alpha_t^2$.
However, the terminal objective exhibits \textit{reference point adaptation}. A government at time $t$ commits to bringing the debt $X_T$ close to their current observed level $X_t$. They penalise deviations from this inherited baseline rather than an absolute historical zero
\[
J(t, x, \alpha) =  \E^{\mathbb{P}^{\smalltext{t}, \smalltext{x}, \smalltext{\alpha}}}\bigg[ \int_t^T \frac{1}{2}\alpha_s^2 \mathrm{d}s + \frac{\Gamma}{2}(X_T - x)^2 \bigg].
\]
This creates a time-inconsistent preference structure: as the debt drifts, future administrations continuously reset the target $x$ to the new prevailing debt level, leading to the \emph{`}drifting goalpost\emph{'} phenomenon that we will analyze shortly.
\end{example}

\subsection{Equilibrium controls representation}\label{sec:LQRsolution}

Following the general theory in \Cref{sec:mainresults}, the equilibrium value function and the associated dual processes are characterised by the BSDE system \eqref{BSDE Weak Formulation}.
For the LQR problem, this system corresponds to, under $\P$

\begin{equation}\label{eq:BSDE for LQR}
    \begin{cases}
       \displaystyle Y_t =  \int_t^T H\big(r, X_r, Z_r, \partial Y_r^{\smallertext{X}_\smalltext{r}}, \partial\partial Y_r^{\smallertext{X}_\smalltext{r}}, \partial Z_r^{\smallertext{X}_\smalltext{r}}\big) \d r - \int_t^T Z_r\cdot \d W_r,\; t\in[0,T],\\[0.5em]
     \displaystyle   \partial Y_t^y = \Gamma(y - X_T) + \int_t^T \partial Z_r^y \sigma^{-1}(\bar{a}X_r + \bar{b}\alpha^\star_r) \d r - \int_t^T \partial Z_r^y\cdot \d W_r, \; t\in[0,T],\\[0.5em]
   \displaystyle     \partial \partial Y_t^y = \Gamma,\; t\in[0,T].
    \end{cases}
\end{equation}

The extended Hamiltonian $H$ corresponds to:
\begin{align*}
    H(t, x, z, \gamma, \eta, \rho) \coloneqq  \inf_{a \in \R} \bigg\{ \frac{1}{2}a^2 + (\bar{a}x + \bar{b}a)\sigma^{-1}z - (\bar{a}x + \bar{b}a)\gamma \bigg\} - \frac{1}{2}\sigma^2 \eta - \sigma \rho.
\end{align*}
The equilibrium control $\alpha^\star$ is the minimiser of this Hamiltonian.
The first-order conditions yield
\[
    a + \bar{b}\sigma^{-1}z - \bar{b}\gamma = 0 \Longleftrightarrow a = -\bar{b}(\sigma^{-1}z - \gamma).
\]

\begin{remark}[Sign convention]
    The general theory in \emph{\Cref{sec:mainresults}} is formulated as a maximisation
    problem, with the agent seeking to maximise the functional $J$.
    The linear--quadratic example studied in this section is instead
    a \emph{minimisation} problem: the agent incurs a quadratic
    running cost $\tfrac{1}{2}\alpha^2$ and a quadratic terminal
    penalty $\tfrac{\Gamma}{2}(X_T - x)^2$, both non-negative, and
    seeks to minimise their expected sum.  To embed this within the
    general framework it suffices to replace $J$ by $-J$ throughout,
    or equivalently to replace $\sup$ by $\inf$ in the Hamiltonian
    \eqref{eq:Hamiltonian_Def} and reverse the inequality in the
    equilibrium condition \ref{Def:equilibrium}.  All structural
    results---the extended \emph{DPP}, the \emph{BSDE} characterisation, the
    necessity and verification theorems---carry over verbatim under
    this sign change. In the notation of this section we, therefore
, write the Hamiltonian as an infimum and identify
    $f(t,y,x,a) = \tfrac{1}{2}a^2$ and
    $\xi(y,x) = \tfrac{\Gamma}{2}(x-y)^2$.
\end{remark}

\begin{remark}[Verification of assumptions]
    The {\rm LQR} problem fits within the framework of \emph{\Cref{Assumptions}}. Thus, \emph{\Cref{thm:necessity}} and \emph{\Cref{thm:verification}} apply to this case. In particular, all equilibria that satisfy the hypotheses of \emph{\Cref{thm:necessity}} must satisfy the above {\rm BSDE}. The fact that \emph{\Cref{thm:wellposedness}} cannot be used here simply prevents us from stating that the equilibrium we are deriving below is unique.
\end{remark}

Substituting the BSDE variables $Z_t$ and $\partial Y_t^{\smallertext{X}_\smalltext{t}}$, we obtain the feedback form
\begin{equation}\label{eq:LQRequilibrium}
    \alpha^\star_t = \bar{b}\big(\partial Y_t^{\smallertext{X}_\smalltext{t}} - \sigma^{-1}Z_t\big).
\end{equation}

To explicitly solve this system, we make use of \Cref{thm:verification} by looking for a decoupling field $\mathcal{J}(t, x, y)$ such that $\mathcal{J}(t, X_t, y) = \mathcal{Y}_t^y$.
This function must solve the following parametrised PDE
\begin{equation}\label{eq:PDE_J}
    \partial_t \mathcal{J} + (\bar{a}x + \bar{b}\alpha^\star)\partial_x \mathcal{J} + \frac{1}{2}\sigma^2 \partial_{xx} \mathcal{J} + \frac{1}{2}(\alpha^\star)^2 = 0, \; \mathcal{J}(T, x, y) = \frac{\Gamma}{2}(x-y)^2.
\end{equation}

\begin{lemma}[Derivation of the Riccati system]\label{lemma:LQR_Riccati}
 Assume that the value function admits the quadratic \emph{Ansatz}
    \begin{equation}\label{eq:Ansatz}
        \mathcal{J}(t, x, y) = A(t)x^2 + B(t)y^2 + C(t)xy + D(t)x + F(t)y + H(t).
\end{equation}
    Then, the equilibrium control is linear in $x$
    \begin{equation}\label{eq:LQR_control_sol}
        \alpha^\star(t, x) = -\bar{b}\big( (2A(t) + C(t))x  \big).
\end{equation}
    The time-dependent coefficients satisfy the following system of ordinary differential equations
    \begin{equation}\label{eq:SystemODEs}
    \begin{aligned}
        &A^{\prime} + 2\bar{a}A - 2\bar{b}^2 A(2A+C) + \frac{1}{2}\bar{b}^2(2A+C)^2 = 0,  \;A(T)=\Gamma/2, \\
        &C^{\prime} + \bar{a}C - \bar{b}^2 C(2A+C) = 0, \; C(T)=-\Gamma, \\
        &H^{\prime} - \frac{1}{2}\bar{b}^2 D^2 + \sigma^2 A = 0,  \; H(T)=0,
        \end{aligned}
    \end{equation}
    with $B(t) \equiv \Gamma/2$, $D(t) \equiv 0$ and $F(t) \equiv 0$.
    \end{lemma}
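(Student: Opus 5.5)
The plan is to work at the level of the decoupling field: insert the quadratic \emph{Ansatz} \eqref{eq:Ansatz} into the parametrised PDE \eqref{eq:PDE_J}, with the feedback \eqref{eq:LQRequilibrium} also expressed through $\mathcal{J}$, and then identify coefficients of the monomials $x^2, y^2, xy, x, y, 1$. The lemma only claims that the \emph{Ansatz} forces this structure, so I will not address solvability of the resulting Riccati system on $[0,T]$.

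\textbf{Step 1: express the control in terms of $\mathcal{J}$.} Along the trajectory, $Y_t=V(t,X_t)$ with $V(t,x)=\mathcal{J}(t,x,x)$, $Z_t=\sigma\partial_x V(t,X_t)$, and $\partial Y^{X_t}_t=\partial_y\mathcal{J}(t,X_t,X_t)$. The chain rule on the diagonal gives
\[
\partial_x V(t,x)=\partial_x\mathcal{J}(t,x,x)+\partial_y\mathcal{J}(t,x,x).
\]
Substituting into \eqref{eq:LQRequilibrium}, the $\partial_y\mathcal{J}$ contributions cancel, so
\[
\alpha^\star(t,x)=-\bar b\,\partial_x\mathcal{J}(t,x,y)\big|_{y=x}=-\bar b\big((2A+C)(t)\,x+D(t)\big).
\]
Hence $\alpha^\star$ is affine in $x$. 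It becomes the linear law \eqref{eq:LQR_control_sol} once $D\equiv0$ is shown in Step 3. I write $k\coloneqq 2A+C$ below.

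\textbf{Step 2: plug into \eqref{eq:PDE_J} and match coefficients.} The terms of the PDE are:
\begin{itemize}
\item $\partial_t\mathcal{J}=A'x^2+B'y^2+C'xy+D'x+F'y+H'$;
\item the drift $\big((\bar a-\bar b^2k)x-\bar b^2D\big)(2Ax+Cy+D)$;
\item the diffusion term $\sigma^2A$;
\item the running cost $\tfrac12\bar b^2(kx+D)^2$.
\end{itemize}
The terminal condition $\tfrac{\Gamma}{2}(x-y)^2$ gives $A(T)=B(T)=\Gamma/2$, $C(T)=-\Gamma$, and $D(T)=F(T)=H(T)=0$. Matching coefficients:
\begin{itemize}
\item $y^2$: $B'=0$, so $B\equiv\Gamma/2$.
\item $xy$: $C'+(\bar a-\bar b^2k)C=0$.
\item $x^2$: $A'+2A(\bar a-\bar b^2k)+\tfrac12\bar b^2k^2=0$.
\item $y$: $F'-\bar b^2CD=0$.
\item $x$: $D'+(\bar a-\bar b^2k)D-2\bar b^2AD+\bar b^2kD=0$.
\item constant: $H'-\bar b^2D^2+\tfrac12\bar b^2D^2+\sigma^2A=0$.
\end{itemize}
These are exactly the equations of \eqref{eq:SystemODEs} once $D\equiv0$.

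\textbf{Step 3: the linear sub-system.} The $x$-equation is a linear homogeneous ODE for $D$, with continuous coefficient on any interval where $A$ and $C$ exist, and $D(T)=0$. Uniqueness therefore gives $D\equiv0$. The $y$-equation then reads $F'=0$ with $F(T)=0$, so $F\equiv0$. With $D\equiv0$, the constant-term equation reduces to $H'+\sigma^2A=0$, consistent with the stated form $H'-\tfrac12\bar b^2D^2+\sigma^2A=0$.

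\textbf{Main obstacle.} The delicate point is Step 1, namely justifying which gradient enters the feedback. The Hamiltonian is written with $Z=\sigma\partial_xV$, while $\mathcal{J}$ is differentiated at a frozen parameter $y$. The diagonal chain rule and its cancellation must be handled carefully to get $-\bar b\,\partial_x\mathcal{J}$ rather than $-\bar b\,\partial_xV$. The remaining work is routine bookkeeping of polynomial coefficients, with care over the $\sigma^{-1}$ factors in the weak-formulation drift.
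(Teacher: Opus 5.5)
Your proposal is correct and follows essentially the same route as the paper. You express the feedback through the \emph{Ansatz}: your diagonal chain-rule cancellation giving $\alpha^\star=-\bar b\,\partial_x\mathcal{J}(t,x,x)$ is the same computation the paper carries out explicitly as $\bar b(\partial_y\mathcal{J}-\partial_x V)$. You then substitute into \eqref{eq:PDE_J} and match the coefficients of $x^2,y^2,xy,x,y,1$, exactly as the paper does. Your explicit use of $D(T)=0$ together with uniqueness for the linear homogeneous ODE to conclude $D\equiv 0$, and hence $F\equiv 0$, is a welcome clarification of a step the paper only asserts.
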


\begin{proof}
    We derive the system by substituting the \emph{Ansatz} into the equilibrium condition and the PDE.
    First, recall the identifications from the Markovian setting: $Z_t = \sigma \partial_x V(t, x)$ and $\partial Y_t^{\smallertext{X}_\smalltext{t}} = \partial_y \mathcal{J}(t, x, y)|_{y=x}$, where $V(t, x) = \mathcal{J}(t, x, x)$ is the equilibrium value function.
    Using the \emph{Ansatz} \eqref{eq:Ansatz}, the derivatives are
    \begin{align*}
        \partial_x \mathcal{J}(t, x, y) &= 2A(t)x + C(t)y + D(t), \;  \partial_y \mathcal{J}(t, x, y) = 2B(t)y + C(t)x + F(t).
\end{align*}
    The equilibrium value function is $V(t, x) = (A+B+C)x^2 + (D+F)x + H$.
 Thus,
    \[
        \partial_x V(t, x) = 2(A+B+C)x + (D+F).
\]
    Substituting these into the control formula \eqref{eq:LQRequilibrium} (noting that $\sigma^{-1}Z_t = \partial_x V(t,x)$ implies the term $\bar{b}(\partial Y - \sigma^{-1}Z)$ corresponds to $\bar{b}(\partial_y \mathcal{J} - \partial_x V)$):
    \begin{align*}
        \alpha^\star(t, x) = \bar{b} \Big( \big( 2Bx + Cx + F \big) - \big( 2(A+B+C)x + D+F \big) \Big) 
        = -\bar{b} \big( (2A + C)x + D \big).
\end{align*}
    Let us define the feedback gains $K(t) \coloneqq  \bar{b}(2A(t)+C(t))$ and $\Lambda(t) \coloneqq  \bar{b}D(t)$, so $\alpha^\star = -Kx - \Lambda$.
    Now, substitute $\mathcal{J}$ and $\alpha^\star$ into the PDE \eqref{eq:PDE_J}. We expand all terms fully
    \[
        \underbrace{(A^{\prime}x^2 + B^{\prime}y^2 + C^{\prime}xy + D^{\prime}x + F^{\prime}y + H^{\prime})}_{\partial_\smalltext{t} \mathcal{J}} + (\bar{a}x - \bar{b}Kx - \bar{b}\Lambda)\underbrace{(2Ax + Cy + D)}_{\partial_\smalltext{x} \mathcal{J}} + \frac{1}{2}\sigma^2 \underbrace{(2A)}_{\partial_{\smalltext{x}\smalltext{x}}\mathcal{J}} + \frac{1}{2}\underbrace{(K^2x^2 + 2K\Lambda x + \Lambda^2)}_{(\alpha^\smalltext{\star})^\smalltext{2}} = 0.
    \]
    Matching coefficients for each monomial term
    \begin{itemize}
        \item $x^2$: $A^{\prime} + 2A(\bar{a} - \bar{b}K) + \frac{1}{2}K^2 = 0$.
        Substituting $K$
        \[ A^{\prime} + 2\bar{a}A - 2\bar{b}^2 A(2A+C) + \frac{1}{2}\bar{b}^2(2A+C)^2 = 0. \]
        \item $xy$: $C^{\prime} + C(\bar{a} - \bar{b}K) = 0 \implies C^{\prime} + \bar{a}C - \bar{b}^2 C(2A+C) = 0$.
        \item $x$: $D^{\prime} + D(\bar{a} - \bar{b}K) - 2A\bar{b}\Lambda + K\Lambda = 0$.
        Substituting $\Lambda = \bar{b}D$
        \[ D^{\prime} + D(\bar{a} - \bar{b}K) - 2A\bar{b}^2 D + \bar{b}KD = D^{\prime} + \bar{a}D - 2\bar{b}^2 AD = 0. \]

        \item $y^2$: $B^{\prime} = 0$.
        Boundary condition $B(T) = \Gamma/2 \implies B(t) \equiv \Gamma/2$.
        \item $y$: $F^{\prime} - C\bar{b}\Lambda = 0 \implies F^{\prime} - \bar{b}^2 CD = 0$. This implies $F^{\prime} = 0 \implies F(t) \equiv 0$.
        \item constant: $H^{\prime} - D\bar{b}\Lambda + \sigma^2 A + \frac{1}{2}\Lambda^2 = 0$.
        Since $D \equiv 0 \implies \Lambda \equiv 0$, this simplifies to $H^{\prime} + \sigma^2 A = 0$.
    \end{itemize}
    Finally, note that since $D(t) \equiv 0$, the affine part of the control vanishes, and $\alpha^\star(t, x) = -K(t)x$.
\end{proof}

\subsection{Comparison of strategies}\label{sec:numsimLQR}

We compare the performance of the sophisticated (equilibrium) agent against the naive agent. More precisely, we consider

\medskip
\emph{$(i)$ equilibrium strategy:} defined by $\alpha^\star(t, x) = -K_{eq}(t)x$, where $K_{eq}(t) = \bar{b}(2A(t) + C(t))$ is derived from \Cref{lemma:LQR_Riccati}.

\medskip
\emph{$(ii)$ naive strategy:} the naive feedback law $\alpha^{\text{naive}}(t,x) = -K_{\text{naive}}(t)x$ is derived by solving a standard time-consistent LQR problem at each instant $t$, where the agent treats the current state as a fixed target $y = X_t$ for the remaining horizon $[t, T]$. By postulating a quadratic value function $V(s, x; y) = P(s)x^2 + Q(s)xy + R(s)y^2 + M(s)x + N(s)y + L(s)$, the HJB equation for a fixed parameter $y$ yields the following system for the principal coefficients
\begin{gather*}
P'(t) + 2\overline{a}P(t) - 2\overline{b}^2 P(t)^2 = 0, \; P(T) = \Gamma/2,\\
Q'(t) + (\overline{a} - 2\overline{b}^2 P(t))Q(t) = 0, \; Q(T) = -\Gamma.
\end{gather*}
Solving for $Q(t)$ via an integrating factor and evaluating the optimal control $a^* = -\overline{b}(2P(t)x + Q(t)y)$ on the diagonal where $y=x$ leads directly to:
\begin{equation}
K_{\text{naive}}(t) = \overline{b} \left( 2P(t) - \Gamma \exp \left( \int_{t}^{T} (\overline{a} - 2\overline{b}^2 P(u)) du \right) \right).
\end{equation}

\medskip

We simulate the trajectories of the state process $X$ under both strategies using an Euler--Maruyama discretisation. We use the parameters $T=1$, $\bar{a}=0.5$, $\bar{b}=1$, $\sigma=0.5$, $x_0=1$, and $\Gamma=5.0$.
\medskip

\begin{figure}[ht!]
    \centering
    \includegraphics[width=0.9\textwidth]{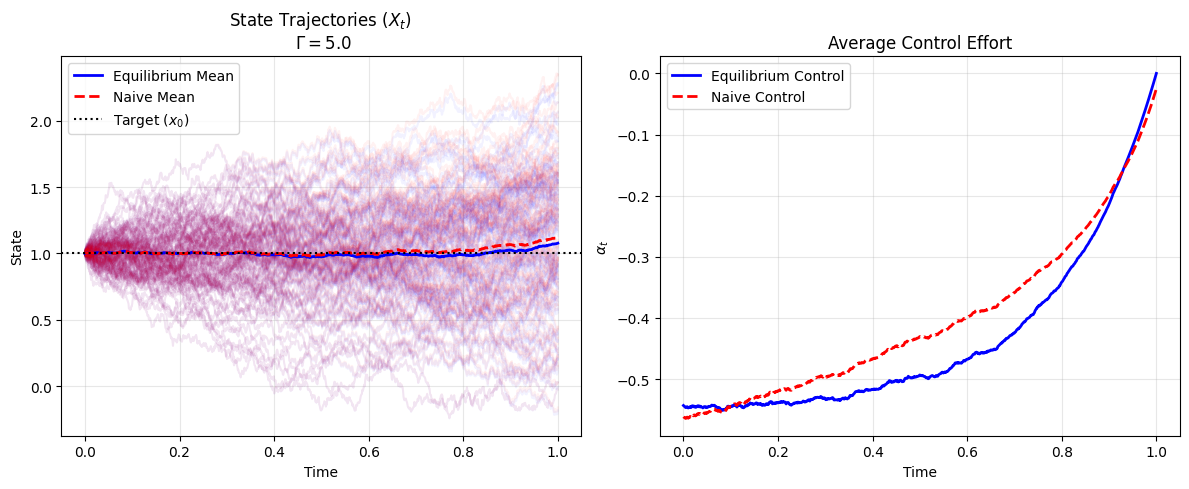}
    \caption{\footnotesize Comparison of state trajectories (left) and control effort (right). The equilibrium strategy (blue) maintains the state slightly near the target $x_0=1.0$ with moderate effort. The naive strategy (red dashed) applies more control initially.}
    \label{fig:LQR_Comparison}
\end{figure}

To rigorously quantify the performance gap, we compute the exact expected time-$0$ cost $J(0, x_0)$ for both strategies. Since both strategies are linear feedback laws of the form $\alpha(t, x) = -K(t)x$, we can derive the cost analytically.
\begin{proposition}[Exact cost]\label{prop:exact_cost}
    For a linear control $\alpha_t = -K(t)X_t$, the expected cost is
    \begin{equation}
        J(0, x_0, \alpha) = \int_0^T \frac{1}{2} K(t)^2 S(t) \d t + \frac{\Gamma}{2} \big( S(T) - 2x_0 m(T) + x_0^2 \big),
    \end{equation}
    
    where $m(t) = \E^{\P^\smalltext{\alpha}}[X_t]$ and $S(t) = \E^{\P^\smalltext{\alpha}}[X_t^2]$ are the first two moments of the state process under the controlled measure $\P^\alpha$, satisfying the {\rm ODEs}
    \begin{equation}
        m^{\prime}(t) = (\bar{a} - \bar{b}K(t))m(t), \; S^{\prime}(t) = 2(\bar{a} - \bar{b}K(t))S(t) + \sigma^2,
    \end{equation}
    with initial conditions $m(0)=x_0$, $S(0)=x_0^2$.
\end{proposition}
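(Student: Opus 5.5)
Under the linear feedback $\alpha_t=-K(t)X_t$, the state is a linear SDE, and the cost is a quadratic functional of $X$. So the plan is to write both the running and terminal costs in terms of $m$ and $S$, and then derive the moment ODEs with It\^o's formula.

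\textbf{Step 1: the SDE and well-posedness.} Under $\P^\alpha$ the state solves
\[
\d X_t=(\bar a-\bar bK(t))X_t\,\d t+\sigma\,\d W^\alpha_t,\qquad X_0=x_0.
\]
This is a linear SDE with continuous (hence bounded) coefficients on $[0,T]$. \Cref{lem:state_moments} therefore gives $\E^{\P^\alpha}[\sup_t|X_t|^p]<\infty$ for all $p$. I take $K$ continuous, which holds for the Riccati gains of \Cref{lemma:LQR_Riccati} and for $K_{\text{naive}}$.

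\textbf{Step 2: rewrite the cost.} Since the running cost is $\frac12\alpha_t^2=\frac12K(t)^2X_t^2$ and the integrand has finite moments, Fubini gives
\[
\E^{\P^\alpha}\Big[\int_0^T\tfrac12\alpha_t^2\,\d t\Big]=\int_0^T\tfrac12K(t)^2S(t)\,\d t.
\]
For the terminal term, expanding $(X_T-x_0)^2=X_T^2-2x_0X_T+x_0^2$ gives
\[
\E^{\P^\alpha}\big[\tfrac\Gamma2(X_T-x_0)^2\big]=\tfrac\Gamma2\big(S(T)-2x_0m(T)+x_0^2\big),
\]
which is the claimed formula.

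\textbf{Step 3: the moment ODEs.} Integrating the SDE and taking expectations, the stochastic integral $\int_0^t\sigma\,\d W^\alpha$ is a true martingale with zero mean. Hence
\[
m(t)=x_0+\int_0^t(\bar a-\bar bK(u))m(u)\,\d u .
\]
Next, It\^o's formula applied to $X_t^2$ gives
\[
\d X_t^2=\big(2(\bar a-\bar bK(t))X_t^2+\sigma^2\big)\d t+2\sigma X_t\,\d W^\alpha_t .
\]
The local martingale $\int 2\sigma X\,\d W^\alpha$ is a true martingale because $\E\int_0^T X_t^2\,\d t<\infty$ by Step 1. Taking expectations yields
\[
S(t)=x_0^2+\int_0^t\big(2(\bar a-\bar bK(u))S(u)+\sigma^2\big)\,\d u .
\]
Continuity of the integrands in $u$ then gives the differential forms, with $m(0)=x_0$ and $S(0)=x_0^2$. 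Both ODEs are linear, so they have unique solutions, and these are the moments.

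The only real point of care is the martingale property in Step 3, which is what justifies dropping the stochastic integrals. It is handled by the moment bounds of \Cref{lem:state_moments}. Everything else is direct computation.
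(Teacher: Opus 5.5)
Your proposal is correct and follows the same route as the paper. Under $\P^\alpha$ the state solves the linear SDE $\d X_t=(\bar a-\bar bK(t))X_t\,\d t+\sigma\,\d W^\alpha_t$; taking expectations, and applying It\^o's formula to $X_t^2$, gives the ODEs for $m$ and $S$, and expanding $\tfrac12K(t)^2X_t^2$ and $\tfrac\Gamma2(X_T-x_0)^2$ gives the cost formula. The one addition is that you justify dropping the stochastic integrals (and the Fubini exchanges) through the moment bounds of \Cref{lem:state_moments}, which the paper leaves implicit.
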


\medskip

\begin{proof}
    The state dynamics under the measure $\P^\alpha$ are given by $\d X_t = (\bar{a} - \bar{b}K(t))X_t \d t + \sigma \d W^\alpha_t$. Taking expectations yields the ODE for $m(t)$. Applying It\^o's formula to $X_t^2$ gives $\d (X_t^2) = (2(\bar{a}-\bar{b}K)X_t^2 + \sigma^2)\d t + 2\sigma X_t \d W^\alpha_t$. Taking expectations under $\P^\alpha$ yields the ODE for $S(t)$. Substituting $\E^{\P^\alpha}[\alpha_t^2] = K(t)^2 S(t)$ and expanding the terminal term yields the cost formula.
\end{proof}

\begin{figure}[ht!]
    \centering
    \includegraphics[width=0.5\textwidth]{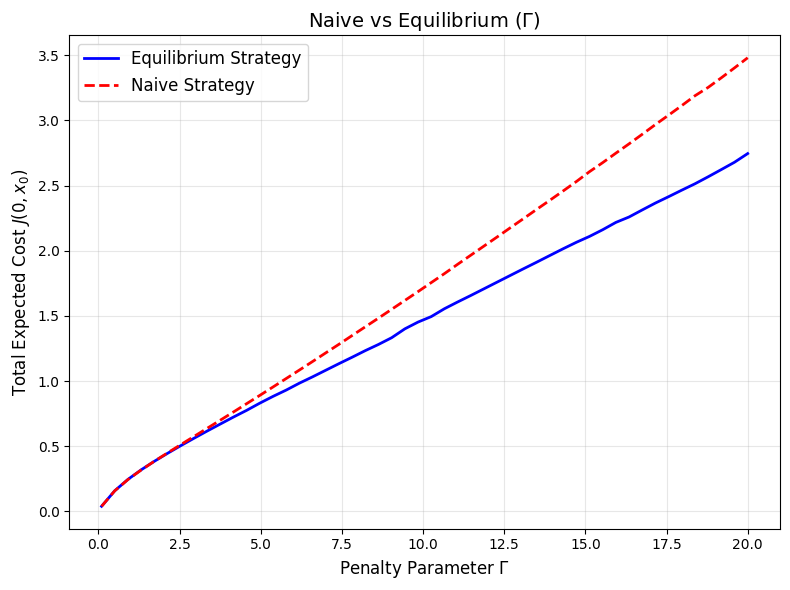}
    \caption{\footnotesize Sensitivity analysis. The total expected cost $J(0, x_0)$ is plotted against the inconsistency parameter $\Gamma$. The equilibrium strategy consistently outperforms the naive strategy as the penalty parameter increases.}
    \label{fig:Gamma_Sensitivity}
\end{figure}

The sensitivity analysis in \Cref{fig:Gamma_Sensitivity}, computed using Proposition \ref{prop:exact_cost}, confirms that the sophisticated strategy yields a strictly lower cost for all $\Gamma > 0$, with the gap widening as $\Gamma$ increases. This is coherent with the intuition that the parameter $\Gamma$ incentivises cooperation between past and future versions of the controllers by increasing the scale of the quadratic penalty.

\section{Time-dependency as a particular case of state-dependence}\label{sec:time-dependency}

The primary focus of this paper has been the dependence of preferences on the current state $x$. However, the vast majority of the literature on time-inconsistent control focuses on a different source of inconsistency: time-dependent preferences. The canonical example is non-exponential discounting (\emph{e.g.}, hyperbolic or quasi-hyperbolic discounting), where the agent's valuation of future rewards depends on the specific time $t$ at which the valuation is made. A natural question arises: is the theory developed here for state-dependent inconsistency compatible with the existing theory for time-dependent inconsistency?

\medskip
In this section, we show that our result is, in fact, a strict generalisation of \cite{hernandez2023me} in the Markovian, uncontrolled volatility case. We achieve this by viewing the initial time $t$ not as an independent parameter, but as a component of the initial state vector. By augmenting the state process, we can cover the time-dependent problem perfectly in our state-dependent framework.

\begin{remark}
    Note that the non-Markovian case is not feasible in our setting, since the presence of the current state in the reward functional compels us to look for feedback strategies that depend on the current state exclusively.
    However, we believe that the extension to controlled volatility should be possible, although technically involved.
\end{remark} 

\subsection{General problem formulation}

Let us consider a reward functional where the running cost $f$ and the terminal cost $\xi$ depend explicitly on the initialisation time $t$. We define the cost functional for an agent initialised at time $t$ with state $x$ as
\begin{align} \label{eq:Hamiltonian Time Dependency}
    \tilde J(t, x, \alpha) \coloneqq  \E^{\mathbb{P}^{\smalltext{t}, \smalltext{x}, \smalltext{\alpha}}}\bigg[\int_{t}^T f(s,   t, X_s, \al_s)\d s + \xi(t, X_T)\bigg].
\end{align}
Here, the distinction between the variable $t$ and the variable $s$ is crucial
\begin{itemize}
    \item $s \in [t, T]$ is the running time, representing the evolution of the system;
    \item $t \in [0, T]$ is the preference parameter, representing the current time from the perspective of the agent.
\end{itemize}
For example, in non-exponential discounting, one might have $f(s,   t, X_s, \alpha_s) = h(s-t) U(X_s, \alpha_s)$, where $h(\cdot)$ is the discount function. The inconsistency arises because the discount factor $h(s-t)$ changes as the initial time $t$ moves forward.

\subsection{The augmented state technique}

To apply the theory from \Cref{sec:mainresults}, we must recast the dependence on the parameter $t$ as a dependence on a state variable. We accomplish this by introducing the augmented state process. Let $\mathbf{X}$ be a process valued in $\R^{n+1}$ defined for $s \in [0,T]$ by
\[
    \mathbf{X}_s \coloneqq  \begin{pmatrix} s \\ X_s \end{pmatrix}.
\]
The dynamics of this augmented process under the control $\alpha$ are given by
\begin{equation}
    \d \mathbf{X}_s = \begin{pmatrix} 1 \\ \sigma(s, X_s)b(s, X_s, \al_s) \end{pmatrix} \d s + \begin{pmatrix} 0_{1 \times d} \\ \sigma(s, X_s) \end{pmatrix} \d W^{\alpha}_s, \;
    \text{initialised at } 
    \mathbf{X}_t = \begin{pmatrix} t \\ x \end{pmatrix} \eqqcolon  \mathbf{x}.
\end{equation}

We can now define the augmented cost functions $\tilde{f}$ and $\tilde{\xi}$ on the augmented space $\R^{n+1} \times \R^{n+1}$ (where the first coordinate represents the time component)
\begin{align*}
    \tilde{f}_s(\mathbf{x}, \mathbf{z}, a) \coloneqq  f(s,   \mathbf{x}_\smallertext{1},  \mathbf{z}_{\smallertext{2}:n\smallertext{+}\smallertext{1}}, a), \;
    \tilde{\xi}(\mathbf{x}, \mathbf{z}) \coloneqq  \xi(\mathbf{x}_\smallertext{1}, \mathbf{z}_{\smallertext{2}:n\smallertext{+}\smallertext{1}}).
\end{align*}
Using this notation, the time-dependent functional \eqref{eq:Hamiltonian Time Dependency} can be rewritten exactly in the form of our state-dependent problem
\begin{equation}
    J(\mathbf{x}, \alpha) = \E^{\P^{\smalltext{\mathbf{x}}, \smalltext{\alpha}}} \bigg[ \int_t^T \tilde{f}_s(\mathbf{x}, \mathbf{X}_s, \al_s) \d s + \tilde{\xi}(\mathbf{x}, \mathbf{X}_T) \bigg].
\end{equation}
This reformulation allows us to apply \Cref{thm:verification} directly. The parameter of the problem is now the vector $\mathbf{x} = (t, x)$.

\subsection{Sanity check: recovering the non-exponential discounting system}

We now demonstrate that applying our general BSDE system to this augmented set-up recovers the specific system derived in \cite{hernandez2023me} for the purely time-dependent case. In the augmented framework, the equilibrium value function $Y_s$ is accompanied by a gradient process $\partial Y^{\mathbf{y}}$. Since the parameter is $\mathbf{y} = (t, x)$, this gradient decomposes into two components:
\[
    \partial Y_s^{\mathbf{y}} = \begin{pmatrix} \partial Y_s^{(t)} \\ \partial Y_s^{(x)} \end{pmatrix}.
\]
Here, $\partial Y^{(t)}$ represents the sensitivity of the value to the initial time (the time-inconsistency term), while $\partial Y^{(x)}$ represents the sensitivity to the initial state (the spatial inconsistency term).

\medskip

Assume the problem's time inconsistency comes purely from the appearance of the present time (as in \cite{hernandez2023me}). This means the preferences depend on $t$, but not on $x$ as a parameter. In other words
\[
    \partial_x f(s,    t, y, a) = 0,\; \text{and} \; \partial_x \xi(t, y) = 0.
\]
Let us examine the BSDE for the gradient component (the second line of \Cref{BSDE Weak Formulation}) applied to our augmented set-up.

\medskip
\emph{$(i)$ The spatial component $(\partial Y^{(x)})$:} since the drivers $\partial_x f$ and $\partial_x \xi$ are zero, the BSDE for the spatial gradient $\partial Y^{(x)}$ becomes a homogeneous linear BSDE with zero terminal condition. By uniqueness, $\partial Y^{(x)}_s \equiv 0$. This aligns with expectation: if preferences do not depend on the initial state $x$, the inconsistency adjustment for $x$ vanishes.

\medskip

\emph{$(ii)$ The inconsistency adjustment:}
Recall that in our general framework, the driver of the BSDE for $Y$ contains the inconsistency adjustment term corresponding to the operator $\Lc^{\alpha^\star}_{(\mathbf{y})}$. For the augmented state $\mathbf{X}$, this is defined as:
\[
    \mathcal{K}_s \coloneqq b^{\mathbf{X}}_s(\mathbf{X}_s, \alpha^\star_s) \cdot \partial_{\mathbf{y}} \mathcal{J}(s, \mathbf{X}_s, \mathbf{X}_s) + \frac{1}{2}\Tr\big[ \Sigma_s(\mathbf{X}_s)\Sigma_s(\mathbf{X}_s)^\top \partial_{\mathbf{yy}}^2 \mathcal{J}(s, \mathbf{X}_s, \mathbf{X}_s) \big] + \Tr\big[ \Sigma_s(\mathbf{X}_s)\Sigma_s(\mathbf{X}_s)^\top \partial_{\mathbf{xy}}^2 \mathcal{J}(s, \mathbf{X}_s, \mathbf{X}_s) \big],
\]
where $\mathbf{y} = (t, x)$ denotes the preference parameter in the augmented set-up, and $\Sigma_s(\mathbf{X}_s)$ is the diffusion matrix of the augmented process. We compute these terms explicitly. The augmented state dynamics $\d \mathbf{X}_s = (1, b)^\top \d s + (0, \sigma)^\top \d W_s$ imply that the coefficients are vectors and matrices in $\R^{n+1}$
\[
    b^{\mathbf{X}} = \begin{pmatrix} 1 \\ \sigma(s, X_s)b(s, X_s, \alpha^\star_s) \end{pmatrix}, \;
    \Sigma = \begin{pmatrix} 0_{1 \times d} \\ \sigma(s, X_s) \end{pmatrix}, \; 
    \Sigma\Sigma^\top = \begin{pmatrix} 0 & 0_{1 \times n} \\ 0_{n \times 1} & \sigma(s, X_s)\sigma(s, X_s)^\top \end{pmatrix}.
\]

Since $\partial_x \mathcal{J} = 0$, the derivatives with respect to $\mathbf{y}$ simplify. The Jacobian $\partial_{\mathbf{y}} \mathcal{J}$ is $(\partial_t \mathcal{J}, 0)^\top$, and the Hessian matrices have zeros in all entries except potentially the top-left (time-time), which does not interact with the non-zero block of $\Sigma\Sigma^\top$. Specifically
\[
    \Tr\bigg[ \begin{pmatrix} 0 & 0 \\ 0 & \sigma^2 \end{pmatrix} \begin{pmatrix} \partial_{tt}^2 \mathcal{J} & 0 \\ 0 & 0 \end{pmatrix} \bigg] = 0.
\]
The mixed derivative term trace is similarly zero. Thus, the total inconsistency adjustment reduces to the drift term:
    \[
        \mathcal{K}_s = b^{\mathbf{X}} \cdot \partial_{\mathbf{y}} \mathcal{J} = \partial_t \mathcal{J}(s, \mathbf{X}_s, \mathbf{X}_s).
    \]
This confirms that the extra drift in the Hamiltonian is exactly the time-derivative of the value function with respect to the initial time. The BSDE for the time-derivative component $ \partial Y^{(t)}_s$ is then obtained directly from our general system \eqref{BSDE Weak Formulation}
\begin{align}\label{eq:time_derivative_BSDE}
    \d \partial Y^{(t)}_s &= - \big( \partial_t f(s,    s, X_s, \alpha^\star_s) + \mathcal{Z}_s \cdot b(s, X_s, \alpha^\star_s) \big) \d s + \mathcal{Z}_s \cdot\d W_s, \; \partial{Y}^{(t)}_T = \partial_t \xi(t, X_T).
\end{align}
This recovers the structure of the adjoint equation derived \cite{hernandez2023me}.

\bibliography{bibliographyDylan}

\begin{appendix}

\section{Proof of the extended dynamic programming principle}\label{Appendix:ddp}

In this section, we provide the detailed proof of \Cref{thm: extendedDPP}. We rely on the definition of equilibrium and the regularity of the value function with respect to the preference parameter. We define the auxiliary value function $\Psi(t, x; y)$ as the expected future reward from state $x$ at time $t$ under a fixed equilibrium strategy $\alpha^\star$, evaluated with the fixed preference parameter $y$
\begin{equation}\label{eq:Psi_def}
    \Psi(t, x; y) \coloneqq  \E^{\P^{\smalltext{t}, \smalltext{x}, \smalltext{\alpha}^\tinytext{\star}}} \bigg[ \int_t^T f(u,    y, X_u, \alpha^\star_u) \diff u + \xi(y, X_T) \bigg].
\end{equation}
By definition, the equilibrium value function corresponds to the diagonal restriction $v(t, x) = \Psi(t, x; x)$. We assume throughout this section that the regularity conditions in \Cref{Assumptions} hold.

\medskip
Before we start, let us introduce a technical lemma from the theory of stochastic calculus that turns out to be the crucial step in understanding the dynamic of the process that we are interested in.
\begin{lemma}[Itô--Kunita--Wentzell’s formula]\label{lemma:iv} Let $f(t, x)$ be a family
of $\F$-adapted and measurable stochastic processes, continuous in $(t, x) \in (\mathbb{R}_+ \times \mathbb{R}^d)$, $\P${\rm--a.s.} satisfying
\begin{enumerate}
    \item[$(i)$] for each $t \geq 0$, $\R^d\ni x \longmapsto f(t, x)\in\R$ is $C^2;$
    \item[$(ii)$] there is some $m\in\N^\star$ such that for each $x\in\R^d$, $f(t, x)$ is a continuous $(\F,\P)$--semi-martingale with
    \[
    \mathrm{d}f(t, x) = \sum_{j= 1}^m f_t^j(x) \mathrm{d}M_t^j,
    \]
    where for any $j\in\{1,\dots,m\}$ $M_j$ is a continuous $(\F,\P)$--semi-martingale, and for any $x\in\R^d$, $f_j(x)$ is an $\F$--adapted and measurable stochastic
processes continuous in $(t, x)$, such that $\R^d\ni x \longmapsto f_j(x)\in\R$ is $C^1$.
\end{enumerate}

Let $X = (X_1, \dots, X_d)$ be a continuous $(\F,\P)$--semi-martingale. Then
\begin{align}\label{Ito Wentzell Formula}
\nonumber f(t, X_t) &= f(0, X_0) + \sum_{j= 1}^m \int_0^t f_s^j(X_s) \mathrm{d}M_s^j + \sum_{i= 1}^d\int_0^t \partial_{x_\smalltext{i}}f(s, X_s) \mathrm{d}X^i_s 
    + \sum_{j= 1}^m\sum_{i= 1}^d \int_{0}^t \partial_{x_\smalltext{i}}f^j_s(   X_s) \mathrm{d} [ X^i, M^j]_s  \\
    &\quad + \frac{1}{2}\sum_{j= 1}^d\sum_{i= 1}^d \int_0^t \partial^2_{x_\smalltext{i}x_\smalltext{j}} f(s,   X_s) \mathrm{d} [ X^i, X^j]_s. 
\end{align}
\end{lemma}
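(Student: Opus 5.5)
The plan is to reduce the statement to the classical It\^o formula by approximating the random field by functions whose spatial dependence is simple, and then passing to the limit. I will work locally throughout. Introduce the stopping times $\tau_N\coloneqq\inf\{t:\|X_t\|\ge N\}$ together with stopping times that bound the total variation and quadratic variation of the $M^j$. Since $f$ and $f^j$ are continuous in $(t,x)$ and $C^2$ (resp. $C^1$) in $x$, this makes $f$, $\nabla f$, $\nabla^2 f$, $f^j$ and $\nabla f^j$ bounded on $[0,t\wedge\tau_N]\times\bar B_N$. After localisation it therefore suffices to prove \eqref{Ito Wentzell Formula} on $[0,t\wedge\tau_N]$ and let $N\to\infty$.

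The core step is a discretisation in time. Fix a partition $0=t_0<\dots<t_p=t$ of mesh $\delta$. Then
\[
f(t,X_t)-f(0,X_0)=\sum_k\big(f(t_{k+1},X_{t_{k+1}})-f(t_k,X_{t_{k+1}})\big)+\sum_k\big(f(t_k,X_{t_{k+1}})-f(t_k,X_{t_k})\big),
\]
and I treat the two sums separately.

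\emph{Second sum.} For fixed $k$, the map $x\mapsto f(t_k,x)$ is an $\Fc_{t_k}$-measurable $C^2$ function. Applying the ordinary It\^o formula on $[t_k,t_{k+1}]$ gives $\int\partial_{x_i}f(t_k,X_s)\,\d X^i_s+\tfrac12\int\partial^2_{x_ix_j}f(t_k,X_s)\,\d[X^i,X^j]_s$. As $\delta\to0$, continuity of $(s,x)\mapsto\nabla f,\nabla^2 f$ together with dominated convergence for stochastic integrals yields the $\d X$ and $\d[X,X]$ terms.

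\emph{First sum.} This is $\sum_k\sum_j\int_{t_k}^{t_{k+1}} f^j_s(X_{t_{k+1}})\,\d M^j_s$, a parametrised integral evaluated at a non-adapted point $X_{t_{k+1}}$. I write $f^j_s(X_{t_{k+1}})=f^j_s(X_s)+\big(f^j_s(X_{t_{k+1}})-f^j_s(X_s)\big)$. The first part converges to $\int f^j_s(X_s)\,\d M^j_s$. For the correction, I use the $C^1$ regularity in $x$ and expand to first order:
\[
f^j_s(X_{t_{k+1}})-f^j_s(X_s)\approx\nabla f^j_{t_k}(X_{t_k})\cdot(X_{t_{k+1}}-X_s)+o(\|X_{t_{k+1}}-X_{t_k}\|).
\]
Integrating against $\d M^j$ and using $\int_{t_k}^{t_{k+1}}(X_{t_{k+1}}-X_s)\,\d M^j_s=\int_{t_k}^{t_{k+1}}(M^j_s-M^j_{t_k})\,\d X_s+[X,M^j]_{t_k}^{t_{k+1}}$ (by integration by parts), the sum of the bracket increments converges to $\sum_{i,j}\int\partial_{x_i}f^j_s(X_s)\,\d[X^i,M^j]_s$. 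The remaining terms vanish in probability.

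The main obstacle is exactly this last step. Because $X_{t_{k+1}}$ is anticipating, the stochastic integral $\int_{t_k}^{t_{k+1}}f^j_s(y)\,\d M^j_s$ must be understood as a random field in $y$ evaluated afterwards, and I need a version that is jointly continuous in $y$. I also need the error terms $\sum_k o(\|\Delta X_k\|)\,|\Delta M_k|$ to vanish. My first attempt would use Kolmogorov's continuity criterion for $y\mapsto\int f^j(y)\,\d M^j$ on compact sets. Combined with the local boundedness of $\nabla f^j$ and uniform continuity on $\bar B_N$, this should give $\sum_k\|\Delta X_k\|\,|\Delta M_k|\cdot\omega_N(\delta)\to0$, using Cauchy--Schwarz and the finiteness of the localised quadratic variations. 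If that is too delicate, I would instead invoke the statement directly from \cite{kunita1981some}, where it is proved under precisely these hypotheses.
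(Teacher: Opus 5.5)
The paper does not prove this lemma. It quotes it from \cite[Theorem 1.5.3.2]{jeanblanc2009mathematical} and \cite[Theorem 1]{kunita1981some}, so your fallback is exactly the paper's route. Your self-contained argument, however, has a genuine gap at the step you flag.

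Once $\int_{t_k}^{t_{k+1}}f^j_s(y)\,\d M^j_s$ is read as a random field evaluated at $y=X_{t_{k+1}}$, the splitting $f^j_s(X_{t_{k+1}})=f^j_s(X_s)+\big(f^j_s(X_{t_{k+1}})-f^j_s(X_s)\big)$ under the integral has no meaning. The term $\int_{t_k}^{t_{k+1}}f^j_s(X_s)\,\d M^j_s$ is not an evaluation of that field, and the remainder is an anticipating integral.

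Bounding the error by $\sum_k o(\|\Delta X_k\|)\,|\Delta M^j_k|$ then treats a stochastic integral as a Stieltjes one. When $M^j$ has a martingale part there is no pathwise bound $\big|\int_{t_k}^{t_{k+1}}g_s\,\d M^j_s\big|\le\sup_s|g_s|\,|\Delta M^j_k|$, even for adapted $g$.

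Kolmogorov's criterion does not repair this. With $\nabla f^j$ merely continuous, it gives H\"older continuity of $y\longmapsto\int_{t_k}^{t_{k+1}}f^j_s(y)\,\d M^j_s$. It gives no modulus for the $y$-gradient of this field at points between $X_{t_k}$ and $X_{t_{k+1}}$. You would need such a modulus, since $\sum_k\|\Delta X_k\|$ diverges.

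The missing idea is to stop expanding $f^j$ inside the integral. Instead, Taylor-expand the increment field $D_k(y)\coloneqq f(t_{k+1},y)-f(t_k,y)$, which is $C^2$ by $(i)$, to second order around the $\Fc_{t_k}$-measurable point $X_{t_k}$. This gives $D_k(X_{t_{k+1}})=D_k(X_{t_k})+\nabla D_k(X_{t_k})\cdot\Delta X_k+R_k$ with $|R_k|\le\frac12\sup_{\bar B_N}\|\nabla^2f(t_{k+1},\cdot)-\nabla^2f(t_k,\cdot)\|\,\|\Delta X_k\|^2$. Hence $\sum_k|R_k|\to0$ in probability, by uniform continuity of $\nabla^2 f$ on $[0,t]\times\bar B_N$ and convergence of $\sum_k\|\Delta X_k\|^2$.

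At the adapted point, $D_k(X_{t_k})=\sum_j\int_{t_k}^{t_{k+1}}f^j_s(X_{t_k})\,\d M^j_s$ and $\nabla D_k(X_{t_k})=\sum_j\int_{t_k}^{t_{k+1}}\nabla f^j_s(X_{t_k})\,\d M^j_s$. For fixed $y$ this follows from difference quotients and dominated convergence for stochastic integrals. You then approximate $X_{t_k}$ by countably-valued $\Fc_{t_k}$-measurable variables, using the continuity of $D_k$ and $\nabla D_k$ that you are already given. No Kolmogorov argument is needed.

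Next, apply the product rule on $[t_k,t_{k+1}]$ to $\int_{t_k}^{\cdot}\nabla f^j_s(X_{t_k})\,\d M^j_s$ and $X_\cdot-X_{t_k}$. In the limit this yields $\sum_{i,j}\int_0^t\partial_{x_i}f^j_s(X_s)\,\d[X^i,M^j]_s$, and the two remaining stochastic integrals vanish.

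Finally, this argument, your localisation and your treatment of the second sum all need $\nabla f$, $\nabla^2 f$ and $\nabla f^j$ to be jointly continuous in $(t,x)$. That regularity is part of Kunita's notion of continuous $C^k$-processes. It does not follow from joint continuity plus differentiability in $x$ for each fixed $t$, contrary to your localisation claim. For example, $\sqrt{s}\sin(x/s)$, extended by $0$ at $s=0$, is jointly continuous and smooth in $x$, yet its $x$-derivative is unbounded near $s=0$. So this regularity has to be added as an assumption, not deduced.
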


\begin{remark}
    Note that, in particular, the Itô--Kunita--Wentzell's formula says that, with the assumptions of the theorem, the composition of an Itô process and a one-parametric family of Itô processes is again an Itô process, which is not such a trivial statement.
\end{remark}

We readily see that if the process $X$ is a constant $x$, we are left with the original decomposition of the process $f(t, x).$ This version of the theorem was obtained from \citeauthor*{jeanblanc2009mathematical} \cite[Theorem 1.5.3.2]{jeanblanc2009mathematical}, and we present it here without proof, referring to \citeauthor*{kunita1981some} \cite[Theorem 1]{kunita1981some}.

\medskip
Let us move now to the proof of the extended DPP. We will divide most of the work in \Cref{lemma:auxiliaryforverification,lemma:iterated_inequality,lemma:convergence}.

\begin{lemma}\label{lemma:local_comparison}
    Let $(t, x) \in [0, T] \times \R^n$. Let $\tau \in \Tc_{t, T}$ be an $\F$--stopping time bounded by $t + \delta$ for some deterministic constant $\delta > 0$. For any admissible control $\alpha \in \Ac(t, x)$, the following inequality holds
    \begin{align}\label{eq:local_comparison}
        v(t, x) \ge \E^{\P^{\smalltext{t},\smalltext{x}, \smalltext{\alpha}}} \bigg[ v(\tau, X_\tau) + \int_t^\tau f(u,    x, X_u, \alpha_u) \diff u
        + \big( \Psi(\tau, X_\tau; x) - \Psi(\tau, X_\tau; X_\tau) \big) \bigg] - r(\delta),
    \end{align}
    where $r(\delta)$ is a non-negative error term satisfying the asymptotic property $r(\delta) = o(\delta)$ as $\delta \longrightarrow 0$. Furthermore, if $\alpha = \alpha^\star$, the equality holds with $r(\delta) \equiv 0$.
\end{lemma}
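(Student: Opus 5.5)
The plan rests on one algebraic identity, after which the two cases $\alpha=\alpha^\star$ and general $\alpha$ are treated separately.

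\textbf{Step 1: reduction to the frozen-parameter value.} For the concatenated control $\alpha\otimes_\tau\alpha^\star$ the parameter $x$ is frozen. We condition at $\tau$ via \Cref{Thm:Concatenated:M}. Under $\P^{t,x,\alpha\otimes_\tau\alpha^\star}$ the regular conditional law after $\tau$ is $\P^{\tau,X_\tau,\alpha^\star}$, by the Markov property and weak uniqueness. The tower property then gives
\[
J(t,x,\alpha\otimes_\tau\alpha^\star)=\E^{\P^{\smalltext{t},\smalltext{x},\smalltext{\alpha}}}\bigg[\int_t^\tau f(u,x,X_u,\alpha_u)\,\diff u+\Psi(\tau,X_\tau;x)\bigg].
\]
Since $v(\tau,X_\tau)=\Psi(\tau,X_\tau;X_\tau)$, the right-hand side of \eqref{eq:local_comparison} without $r(\delta)$ equals exactly $J(t,x,\alpha\otimes_\tau\alpha^\star)$. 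The lemma is therefore equivalent to
\[
v(t,x)\ge J(t,x,\alpha\otimes_\tau\alpha^\star)-r(\delta).
\]

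\textbf{Step 2: the case $\alpha=\alpha^\star$.} Here $\alpha^\star\otimes_\tau\alpha^\star=\alpha^\star$. The identity of Step 1 yields $J(t,x,\alpha^\star)=v(t,x)$ exactly, so equality holds with $r\equiv0$.

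\textbf{Step 3: the general case, using \Cref{Def:equilibrium}.} We take $r(\delta)\coloneqq\big(\sup_{\alpha}\{J(t,x,\alpha\otimes_\tau\alpha^\star)-v(t,x)\}\big)^+$, the supremum running over all $\alpha$ and all stopping times $\tau\le t+\delta$. It remains to show that this quantity is $o(\delta)$. Fix $\varepsilon>0$. The equilibrium definition gives $\ell_\varepsilon>0$ such that, for all $\ell<\ell_\varepsilon$ and a.e.\ starting point,
\[
J(t,X_t,\alpha\otimes_\ell\alpha^\star)\le v(t,X_t)+\varepsilon\ell .
\]
Two issues must be addressed before this can be applied.
\begin{enumerate}
\item[$(a)$] \emph{Passing from a.s.\ to the fixed point $x$.} We use the continuity of $J$ and $v$ in $x$, which follows from \Cref{Assumptions} and the non-degeneracy of $\sigma$. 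Alternatively, we work on the support of $X_t$, which under non-degeneracy is all of $\R^n$.
\item[$(b)$] \emph{Replacing the deterministic horizon $\ell$ by the stopping time $\tau\le t+\delta$.} Set $\ell=\delta$ and define $\tilde\alpha\coloneqq\alpha\mathbf 1_{[t,\tau)}+\alpha^\star\mathbf 1_{[\tau,t+\delta)}$. Then $\tilde\alpha\otimes_\delta\alpha^\star=\alpha\otimes_\tau\alpha^\star$, and $\tilde\alpha$ is admissible; if needed, continuity is restored by a mollification or approximation argument.
\end{enumerate}
These give $J(t,x,\alpha\otimes_\tau\alpha^\star)-v(t,x)\le\varepsilon\delta$ for all $\delta<\ell_\varepsilon$. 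Hence $r(\delta)\le\varepsilon\delta$, and since $\varepsilon>0$ is arbitrary, $r(\delta)=o(\delta)$.

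\textbf{Expected main obstacle.} The delicate step is $(b)$, together with the uniformity of the bound over all admissible $\alpha$ in $(a)$. The equilibrium definition is stated with a deterministic horizon $\ell$ and on a $\P$-full set. I expect to rely on the fact that the definition quantifies over \emph{all} $\alpha\in\Ac$: the rewriting $\tilde\alpha\otimes_\delta\alpha^\star=\alpha\otimes_\tau\alpha^\star$ absorbs the random horizon into a new admissible control. The continuity requirement on controls in $\Ac$ may have to be relaxed or handled by approximation.
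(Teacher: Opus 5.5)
Your proposal follows the paper's proof. Both use the concatenated-measure and tower-property identity to turn the right-hand side of \eqref{eq:local_comparison} (without $r(\delta)$) into $J(t,x,\alpha\otimes_\tau\alpha^\star)$, get the case $\alpha=\alpha^\star$ trivially, and take the $o(\delta)$ slack from \Cref{Def:equilibrium}. The paper simply asserts $J(t,x,\alpha^\star)\ge J(t,x,\alpha\otimes_\tau\alpha^\star)-o(\delta)$, whereas your rewriting $\tilde\alpha\otimes_\delta\alpha^\star=\alpha\otimes_\tau\alpha^\star$ actually justifies replacing the deterministic horizon of the definition by the stopping time $\tau$. Your bound $r(\delta)\le\varepsilon\delta$ is also uniform in $\alpha$, $\tau$ and the starting point, which is what the summation in \Cref{lemma:iterated_inequality} needs.

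Regarding $(a)$: the continuity in $x$ you invoke is not supplied by \Cref{Assumptions}. For path-dependent $\alpha$, $J(t,\cdot,\alpha)$ is not even a function of the current point alone. Your support argument gives the inequality only for a.e.\ $x$, and fails at $t=0$, where $X_0=x_0$. The paper ignores this point entirely, and its later use of the lemma only needs the $\P$-a.s.\ statement at $x=X_{t_i}$, which \Cref{Def:equilibrium} gives directly because $\P^\alpha\ll\P$.
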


\begin{proof}
    We begin by constructing a specific perturbation of the equilibrium strategy. As usual, let $\hat{\alpha} \coloneqq  \alpha \otimes_\tau \alpha^\star$ be the concatenated control defined by
    \[
        \hat{\alpha}_s(\omega) \coloneqq  \alpha_s(\omega)\mathbf{1}_{[t, \tau(\omega))}(s) + \alpha^\star_s(\omega)\mathbf{1}_{[\tau(\omega), T]}(s).
    \]
    This strategy follows the arbitrary control $\alpha$ until the stopping time $\tau$, and reverts to the equilibrium strategy $\alpha^\star$ thereafter.
    
    \medskip
    By \Cref{Def:equilibrium}, the strategy $\alpha^\star$ is optimal against local deviations up to a first-order error. In other words, for small enough $\delta$
    \[
        J(t, x, \alpha^\star) \ge J(t, x, \hat{\alpha}) - o(\delta).
    \]

    The left-hand side is, by the definition of the value function, exactly $v(t, x)$. We now analyze the right-hand side, $J(t, x, \hat{\alpha})$. By the definition of the cost functional, we have
    \[
        J(t, x, \hat{\alpha}) = \E^{\P^{\smalltext{t}, \smalltext{x}, \smalltext{\alpha}}} \bigg[ \int_t^\tau f(u,    x, X_u, \alpha_u) \diff u + \int_\tau^T f(u,    x, X_u, \alpha^\star_u) \diff u + \xi(x, X_T) \bigg].
    \]
    Note that the probability measure $\P^{t, x, \alpha}$ governs the dynamics in $[t, \tau]$, while the dynamics in $(\tau, T]$ is governed by $\alpha^\star$ given the state in $\tau$. We apply the tower property of conditional expectations, conditioning on the $\sigma$-algebra $\Fc_\tau$
    \[
        J(t, x, \hat{\alpha}) = \E^{\P^{\smalltext{t}, \smalltext{x}, \smalltext{\alpha}}} \Bigg[ \int_t^\tau f(u,    x, X_u, \alpha_u) \diff u + \E^{\P^{\smalltext{t}, \smalltext{x}, \smalltext{\hat{\alpha}}}} \bigg[ \int_\tau^T f(u,    x, X_u, \alpha^\star_u) \diff u + \xi(x, X_T) \bigg| \Fc_\tau \bigg] \Bigg].
    \]

    By the properties of the concatenated measure introduced in \Cref{Thm:Concatenated:M}, the conditional distribution of the process after $\tau$ given $\Fc_\tau$ is precisely given by the kernel $\P^{\tau, X_\smalltext{\tau}, \alpha^\smalltext{\star}}$. Consequently, the inner conditional expectation satisfies
    \[
        \E^{\P^{\smalltext{t}, \smalltext{x}, \smalltext{\hat{\alpha}}}} \bigg[ \int_\tau^T f(u,    x, X_u, \alpha^\star_u) \diff u + \xi(x, X_T) \bigg| \Fc_\tau \bigg] = \E^{\P^{\smalltext{\tau}, \smalltext{X}_\tinytext{\tau}, \smalltext{\alpha}^\tinytext{\star}}} \bigg[ \int_\tau^T f(u,    x, X_u, \alpha^\star_u) \diff u + \xi(x, X_T) \bigg].
    \]
    Comparing this to the definition in \eqref{eq:Psi_def}, we identify the right-hand side precisely as the auxiliary value function $\Psi(\tau, X_\tau; x)$.
    Substituting this back into the expansion of $J$, we obtain
    \[
        J(t, x, \hat{\alpha}) = \E^{\P^{\smalltext{t}, \smalltext{x}, \smalltext{\alpha}}} \bigg[ \int_t^\tau f(u,    x, X_u, \alpha_u) \diff u + \Psi(\tau, X_\tau; x) \bigg].
    \]    
    
    Using the initial inequality $v(t, x) \ge J(t, x, \hat{\alpha}) - o(\delta)$, we have
    \[
        v(t, x) \ge \E^{\P^{\smalltext{t}, \smalltext{x}, \smalltext{\alpha}}} \bigg[ \int_t^\tau f(u,    x, X_u, \alpha_u) \diff u + \Psi(\tau, X_\tau; x) \bigg] - o(\delta).
    \]
    Finally, we introduce the equilibrium value function at time $\tau$. Recall that $v(\tau, z) = \Psi(\tau, z; z)$. We add and subtract $v(\tau, X_\tau) = \Psi(\tau, X_\tau; X_\tau)$ inside the expectation
    \[
        \Psi(\tau, X_\tau; x) = v(\tau, X_\tau) + \big( \Psi(\tau, X_\tau; x) - \Psi(\tau, X_\tau; X_\tau) \big).
    \]
    Plugging this decomposition into the inequality yields the result \eqref{eq:local_comparison}.
\end{proof}

\begin{lemma}\label{lemma:iterated_inequality}
    Fix a time horizon $S > t$ and $N\in\N^\star$. Let $\Pi_\smallertext{N} \coloneqq \{t_0, t_1, \dots, t_\smallertext{N}\}$ be a partition of the interval $[t, S]$, where $t_0 = t$ and $t_\smallertext{N} = S$. For any admissible control $\alpha$
     \begin{align}\label{eq:iterated_sum}
        v(t, x) &\ge \E^{\P^\smalltext{\alpha}} \Bigg[ v(S, X_S) + \sum_{i=0}^{\smallertext{N}\smallertext{-}1} \int_{t_\smalltext{i}}^{t_{\smalltext{i}\smalltext{+}\smalltext{1}}} f(u,    X_{t_\smalltext{i}}, X_u, \alpha_u) \diff u + \sum_{i=0}^{\smallertext{N}\smallertext{-}1} \Big( \Psi\big(t_{i+1}, X_{t_{\smalltext{i}\smalltext{+}\smalltext{1}}}; X_{t_\smalltext{i}}) - \Psi(t_{i+1}, X_{t_{\smalltext{i}\smalltext{+}\smalltext{1}}}; X_{t_{\smalltext{i}\smalltext{+}\smalltext{1}}}) \Big) \Bigg] - o(1).
    \end{align}
\end{lemma}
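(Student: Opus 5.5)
The plan is to iterate \Cref{lemma:local_comparison} along the partition $\Pi_N$ and telescope. The natural reading of \eqref{eq:iterated_sum} is that $\P^\alpha$ stands for $\P^{t,x,\alpha}$ and that the $o(1)$ term is $\sum_{i} r(t_{i+1}-t_i)$. This sum vanishes as the mesh $|\Pi_N|\to 0$, because each $r(\delta_i)=o(\delta_i)$ and $\sum_i\delta_i=S-t$.

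\textbf{Step 1 (one interval, conditionally).} For each $i\in\{0,\dots,N-1\}$, apply \Cref{lemma:local_comparison} with initial condition $(t_i,X_{t_i})$, deterministic stopping time $\tau=t_{i+1}$ and $\delta_i\coloneqq t_{i+1}-t_i$. The control used is the restriction of $\alpha$ to $[t_i,T]$, read through the r.c.p.d.\ of $\P^{t,x,\alpha}$ given $\Fc_{t_i}$. \Cref{Thm:Concatenated:M} identifies this r.c.p.d.\ with $\P^{t_i,X_{t_i},\alpha}$ for $\P^{t,x,\alpha}$--a.e.\ path. This yields, almost surely,
\begin{align*}
v(t_i,X_{t_i}) \ge \E^{\P^{t,x,\alpha}}\bigg[v(t_{i+1},X_{t_{i+1}})+\int_{t_i}^{t_{i+1}}f(u,X_{t_i},X_u,\alpha_u)\diff u+\Psi(t_{i+1},X_{t_{i+1}};X_{t_i})-\Psi(t_{i+1},X_{t_{i+1}};X_{t_{i+1}})\,\bigg|\,\Fc_{t_i}\bigg]-r(\delta_i).
\end{align*}
Here the frozen preference parameter in \Cref{lemma:local_comparison} is the initial state of that sub-problem, namely $X_{t_i}$. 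This is exactly why $f(u,X_{t_i},\cdot)$ and $\Psi(\cdot\,;X_{t_i})$ appear.

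\textbf{Step 2 (telescoping).} Take $\E^{\P^{t,x,\alpha}}$ of the Step 1 inequality for $i=N-1$. Substitute it into the inequality for $i=N-2$ using the tower property, and proceed backwards down to $i=0$. Since $X_{t_0}=x$ is deterministic, the left side becomes $v(t,x)$. All intermediate terms $v(t_i,X_{t_i})$, $1\le i\le N-1$, cancel, leaving $v(S,X_S)$ plus the two sums in \eqref{eq:iterated_sum}, minus $\sum_i r(\delta_i)$. Every expectation involved is finite by \Cref{Assumptions}.$(iii)$--$(iv)$: the polynomial growth of $f$ and $\xi$ combined with moments of all orders of $X$ makes $v$, $\Psi$ and the running integrals integrable.

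\textbf{Main obstacle.} The delicate point is the uniformity of the error $r(\delta)$ in the starting point, and its measurability. \Cref{lemma:local_comparison} is applied at the random initial condition $(t_i,X_{t_i})$ with random parameter $X_{t_i}$, so we need a bound $r(\delta)$ independent of $\omega$. I would obtain it directly from \Cref{Def:equilibrium}: for fixed $\varepsilon>0$ and $\delta<\ell_\varepsilon$, the equilibrium inequality $J(s,X_s,\alpha^\star)\ge J(s,X_s,\alpha\otimes_\delta\alpha^\star)-\varepsilon\delta$ holds $\P$--a.s.\ simultaneously for all $s$. Transferring this null set from $\P$ to $\P^{t,x,\alpha}$ uses equivalence of measures, which follows from the Girsanov density. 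One then gets $r(\delta)\le\varepsilon\delta$ for all $\delta<\ell_\varepsilon$, uniformly in the state. Consequently $\sum_i r(\delta_i)\le\varepsilon(S-t)$ once $|\Pi_N|<\ell_\varepsilon$. Since $\varepsilon$ is arbitrary, the error is $o(1)$ as $N\to\infty$ along partitions with vanishing mesh.
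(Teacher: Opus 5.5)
Your proposal is the paper's argument: apply \Cref{lemma:local_comparison} on each $[t_i,t_{i+1}]$ conditionally on $\Fc_{t_i}$ with the preference parameter frozen at $X_{t_i}$, take expectations and telescope; the paper simply sums the $o(t_{i+1}-t_i)$ errors and never addresses the uniformity in the starting point that you raise. In that extra step, note that $\P^{t,x,\alpha}$ pins the path at time $t$ and in general is not equivalent to $\P$ when $t>0$ (Girsanov only gives $\P^\alpha\sim\P$, hence $(\P^\alpha)^t_\omega\sim\P^t_\omega$ for $\P$--a.e.\ $\omega$), so your transfer of the null set from \Cref{Def:equilibrium} is literally justified only when starting from $(0,x_0)$, or for $\P$--a.e.\ starting configuration at time $t$, and not for every deterministic $(t,x)$.
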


\begin{proof}

We proceed by backward induction or simple iteration. Consider the interval $[t_i, t_{i+1}]$. We apply \Cref{lemma:local_comparison} conditioned on the filtration $\Fc_{t_\smalltext{i}}$, with the preference parameter frozen at the state $X_{t_\smalltext{i}}$. This gives
    \[
        v(t_i, X_{t_\smalltext{i}}) \ge \E^{\P^\smalltext{\alpha}} \bigg[ v(t_{i+1}, X_{t_{\smalltext{i}\smalltext{+}\smalltext{1}}}) + \int_{t_\smalltext{i}}^{t_{\smalltext{i}\smalltext{+}\smalltext{1}}} f(u,    X_{t_\smalltext{i}}, X_u, \alpha_u) \diff u + \Delta_{i} \bigg| \Fc_{t_i} \bigg] - o(t_{i+1}-t_i),
    \]
    where $\Delta_i \coloneqq \Psi(t_{i+1}, X_{t_{\smalltext{i}\smalltext{+}\smalltext{1}}}; X_{t_\smalltext{i}}) - \Psi(t_{i+1}, X_{t_{\smalltext{i}\smalltext{+}\smalltext{1}}}; X_{t_{\smalltext{i}\smalltext{+}\smalltext{1}}})$. Taking expectations under $\P^\alpha$ and summing these inequalities from $i=0$ to $N-1$ leads to a telescoping sum for the value function terms $v(t_i, X_{t_\smalltext{i}})$, leaving only the initial term $v(t, x)$ and the terminal term $v(S, X_S)$, plus the cumulative sums of the running costs and the adjustment terms $\Delta_i$.
\end{proof}

Now we conclude the proof of the extended dynamic programming principle in \Cref{thm: extendedDPP} by showing that the sums in \Cref{lemma:iterated_inequality} converge to the terms we expect.
\begin{lemma}[Convergence of the discrete inequality]\label{lemma:convergence}
    Let $(\Pi_N)_{N\in\N^\smalltext{\star}}$ be a sequence of partitions of $[t, S]$ whose mesh size tends to $0$. The discrete inequality in {\rm\Cref{lemma:iterated_inequality}} converges to the following integral formulation
    \begin{align*}
        v(t, x) &\ge \E^{\P^\smalltext{\alpha}} \bigg[ v(S, X_\smallertext{S}) + \int_t^\smallertext{S} f(u,    X_u, X_u, \alpha_u) \diff u - \int_t^\smallertext{S} \bigg( b(u, X_u, \alpha_u) \cdot \partial_y \Psi(u, X_u; X_u) \\ &\quad + \frac{1}{2}\Tr\big[\sigma(u, X_u) \sigma^\top(u, X_u) \partial_{yy}^2 \Psi(u, X_u; X_u)\big] 
         + \Tr\big[\sigma(u, X_u) \sigma^\top(u, X_u) \partial_{xy}^2 \Psi(u, X_u; X_u)\big] \bigg) \diff u \Bigg].
    \end{align*}
\end{lemma}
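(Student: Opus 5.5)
We take the limit in each of the three sums of \eqref{eq:iterated_sum} separately, with $\alpha$ fixed, along a sequence of partitions whose mesh tends to $0$.

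\emph{The running-cost sum.} The sum $\sum_i\int_{t_i}^{t_{i+1}} f(u,X_{t_i},X_u,\alpha_u)\,\d u$ converges $\P^\alpha$--a.s. to $\int_t^S f(u,X_u,X_u,\alpha_u)\,\d u$. This follows from continuity of $f$ in its second argument (\Cref{Assumptions}.$(i)$) and continuity of the paths of $X$. Domination, hence $\mathbb L^1(\P^\alpha)$ convergence, comes from the polynomial growth bound \Cref{Assumptions}.$(iii)$ together with the moment estimate \Cref{Assumptions}.$(iv)$, applied to $\sup_u\|X_u\|$ and $\|\alpha_u\|$.

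\emph{The adjustment sum.} This is the heart of the argument. For each frozen $y$, I will write the Markovian representation $\Psi(s,X_s;y)=\mathcal J^y_s$, where $\mathcal J^y$ is a continuous $\P^\alpha$--semimartingale in $s$ depending on the parameter $y$. Assuming the $C^{1,2,2}$ regularity used throughout (as in \Cref{thm:necessity}), It\^o's formula in $x$ with $y$ frozen gives the decomposition
\[
\d \Psi(s,X_s;y)=\Big(\partial_t\Psi+\Lc^{\alpha_s}_s\Psi\Big)(s,X_s;y)\,\d s+\partial_x\Psi(s,X_s;y)\sigma(s,X_s)\,\d W^\alpha_s .
\]
Both coefficients are $C^1$ in $y$, which is exactly what \Cref{lemma:iv} requires. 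I then apply the It\^o--Kunita--Wentzell formula on $[t_i,t_{i+1}]$ to $s\longmapsto\Psi(s,X_s;X_s)-\Psi(s,X_s;X_{t_i})$, that is, to the random field evaluated along the semimartingale parameter $y=X_s$, started from $X_{t_i}$. Since $\Delta_i$ is minus the increment of this difference on $[t_i,t_{i+1}]$, it equals, up to martingale terms, minus
\[
\int_{t_i}^{t_{i+1}}\Big(\partial_y\Psi\cdot\sigma b(u,X_u,\alpha_u)+\tfrac12\Tr\big[\sigma\sigma^\top\partial^2_{yy}\Psi\big]+\Tr\big[\sigma\sigma^\top\partial^2_{xy}\Psi\big]\Big)(u,X_u;X_u)\,\d u
\]
plus a remainder. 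This remainder is the difference of the $(\partial_t+\Lc^{\alpha_u})$ drifts of $\Psi$ at $y=X_u$ and at $y=X_{t_i}$, whose integrand is $o(1)$ uniformly as the mesh vanishes by continuity. The cross term $\d[X,\partial_x\Psi\,\sigma W]$ is precisely what produces the mixed derivative $\partial^2_{xy}\Psi$. Summing over $i$ and taking expectations removes the stochastic integrals, provided they are true martingales; I will check this using the locally uniform integrability $\mathfrak S^2_{\rm loc}$, $\mathfrak H^2_{\rm loc}$ together with a localisation by the exit times of $X$ from balls $\bar B_R$. The drift sum is then a Riemann-type sum converging to the stated integral.

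\emph{The error term and the conclusion.} The accumulated error is $\sum_i o(t_{i+1}-t_i)$. This tends to $0$ provided the $o(\cdot)$ of \Cref{lemma:local_comparison} is uniform in the initial point $(t_i,X_{t_i})$. That uniformity is what \Cref{Def:equilibrium} delivers: $\ell_\eps>0$ holds simultaneously for all $t$, $\P$--a.s., so $\sum_i\eps(t_{i+1}-t_i)=\eps(S-t)$, after which we let $\eps\downarrow0$.

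The main obstacle is the adjustment sum, specifically the uniform integrability needed to pass the limit inside $\E^{\P^\alpha}$ and to kill the local-martingale parts. Polynomial growth of $\partial_y\Psi$, $\partial^2_{yy}\Psi$ and $\partial^2_{xy}\Psi$ in $(x,y)$, inherited from \Cref{Assumptions}.$(iii)$, combined with moments of $\sup\|X\|$ of every order, should give a dominating function. I would first prove the result on $\{\sup_u\|X_u\|\le R\}$ via stopping, then remove the localisation by dominated convergence.
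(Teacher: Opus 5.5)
Your proposal follows the paper's route. The running-cost sum is handled by dominated convergence. The adjustment term $\Delta_i$ is written as minus the increment of $\Psi(s,X_s;X_s)-\Psi(s,X_s;X_{t_i})$ on $[t_i,t_{i+1}]$, which is exactly the paper's Term~I minus Term~II. Its diagonal It\^o--Kunita--Wentzell expansion produces the $\partial_y\Psi$, $\partial^2_{yy}\Psi$ and $\partial^2_{xy}\Psi$ terms, and the off-diagonal remainder vanishes by continuity and domination. The paper first uses the PDE for $\Psi$ to rewrite that remainder, so it only needs continuity of $f$ and $\partial_x\Psi$ in $y$. Your explicit treatment of the accumulated $\sum_i o(t_{i+1}-t_i)$ error, via the uniformity in $t$ built into \Cref{Def:equilibrium}, supplies a step the paper's proof leaves implicit.
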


\begin{proof}
    To rigorously analyse the convergence of the discrete sums appearing in \Cref{lemma:iterated_inequality}, we introduce the time-discretisation map $\tau_\smallertext{N}: [t, S] \longrightarrow \{t_0, \dots, t_{\smallertext{N}\smallertext{-}1}\}$ defined by $\tau_N(u) \coloneqq t_i$ for $u \in [t_i, t_{i+1})$. This notation allows us to express the discrete Riemann sums as continuous stochastic integrals over the full interval $[t, S]$, facilitating the use of dominated convergence arguments.

    \medskip

    \textit{Part $1$: convergence of the running cost.}
    We consider the Riemann sum approximating the running cost
    \[
        I^{\Pi_\smalltext{N}} \coloneqq \sum_{i=0}^{\smallertext{N}\smallertext{-}1} \int_{t_\smalltext{i}}^{t_{\smalltext{i}\smalltext{+}\smalltext{1}}} f(u,    X_{t_i}, X_u, \alpha_u) \diff u.
    \]
    Using the discretisation map $\tau_\smallertext{N}$, we rewrite this sum as a single global integral
    \[
        I^{\Pi_\smalltext{N}} = \int_t^\smallertext{S} f(u,    X_{\tau_\smalltext{N}(u)}, X_u, \alpha_u) \diff u.
    \]
    We claim that $I^{\Pi_\smalltext{N}}$ converges to $\int_t^\smallertext{S} f(u,    X_u, X_u, \alpha_u) \diff u$ in $\L^1(\R,\Fc,\P^\alpha)$. Indeed, we can apply the dominated convergence theorem under the measure $\P^\alpha$
    \begin{enumerate}
        \item \textit{pointwise convergence:} the trajectories of $X$ are continuous $\P$--a.s. (and thus $\P^\alpha$--a.s.). As the mesh size $|\Pi_\smallertext{N}| \longrightarrow 0$, we have $\tau_\smallertext{N}(u) \longrightarrow u$, implying $X_{\tau_\smalltext{N}(u)} \longrightarrow X_u$ for all $u$. Since $f$ is continuous in its arguments, the integrand $f(u,    X_{\tau_\smalltext{N}(u)}, X_u, \alpha_u)$ converges pointwise to $f(u,    X_u, X_u, \alpha_u)$ for $\mathrm{d}t \otimes \P^\alpha$--almost every $(u, \omega)$;

\medskip
        \item \textit{domination:} we seek a uniform integrable bound. By the polynomial growth assumption on $f$ (\Cref{Assumptions}), there exist constants $C > 0$ and $m \ge 1$ such that for all $u \in [t, S]$
        \[
             \big|f(u,    X_{\tau_\smalltext{N}(u)}, X_u, \alpha_u)\big| \le C\big(1 + \|X_{\tau_\smalltext{N}(u)}\|^m + \|X_u\|^m\big) \le 2C\bigg(1 + \sup_{s \in [t, S]} \|X_s\|^m\bigg) \eqqcolon \mathcal{Z}.
        \]

 Since we have $\mathcal{Z} \in \L^1(\R,\Fc,\P^\alpha)$ due to \Cref{Assumptions}, we can conclude.

    \end{enumerate}

 \medskip
    \textit{Part $2$: convergence of the adjustment term.}
    We now turn to the inconsistency adjustment sum
    \[
        \mathcal{A}^{\Pi_N} \coloneqq \sum_{i=0}^{N-1} \Delta_i, \qquad \Delta_i \coloneqq \Psi(t_{i+1}, X_{t_{i+1}}; X_{t_i}) - \Psi(t_{i+1}, X_{t_{i+1}}; X_{t_{i+1}}).
    \]
    All expectations in the following are taken under $\P^{\alpha}$, the measure induced by the arbitrary control $\alpha$. Fix a partition interval $[t_i, t_{i+1}]$ and decompose
    \[
        \Delta_i = \underbrace{ \Psi(t_{i+1},X_{t_{i+1}};X_{t_i}) - \Psi(t_i,X_{t_i};X_{t_i}) }_{\text{Term I}} - \underbrace{ \big(v(t_{i+1},X_{t_{i+1}}) - v(t_i,X_{t_i})\big) }_{\text{Term II}}.
    \]

    \medskip
    \noindent\textit{Term I.}
    Apply It\^o's formula to $r \longmapsto \Psi(r, X_r; X_{t_i})$ under $\P^{\alpha}$, holding the preference parameter $X_{t_i}$ fixed. Writing $\mathcal{L}^{\alpha_r}_r = \mathcal{L}^{\alpha^\star_r}_r + (\mathcal{L}^{\alpha_r}_r - \mathcal{L}^{\alpha^\star_r}_r)$ and using the PDE $(\partial_t + \mathcal{L}^{\alpha^\star_r}_r)\Psi(\cdot, \cdot; y) = -f(\cdot, y, \cdot, \alpha^\star_r)$, we obtain
    \begin{align*}
        \Psi(t_{i+1},X_{t_{i+1}};X_{t_i}) - \Psi(t_i,X_{t_i};X_{t_i}) &= \int_{t_i}^{t_{i+1}} \Big[ -f(r, X_{t_i}, X_r, \alpha^\star_r) \\
        &\quad + \big(b(r,X_r,\alpha_r) - b(r,X_r,\alpha^\star_r)\big) \cdot \sigma(r,X_r)^{\top} \partial_x \Psi(r,X_r;X_{t_i}) \Big] \d r + M^{(i),\mathrm{I}},
    \end{align*}
    where $M^{(i),\mathrm{I}}$ is a stochastic integral against $W^{\alpha}$ and hence a true $\P^{\alpha}$--martingale by the polynomial growth of $\partial_x\Psi$ and \Cref{Assumptions}. Taking the conditional expectation $\E^{\P^{\alpha}}[\, \cdot \mid \Fc_{t_i}]$ eliminates $M^{(i),\mathrm{I}}$.

    \medskip
    \noindent\textit{Term II.}
    Since $v(r,x) = \Psi(r,x;x)$, applying the chain rule for spatial derivatives gives
    \[
        \partial_x v(r,x) = \partial_x\Psi(r,x;x) + \partial_y\Psi(r,x;x), \qquad \partial_{xx}^2 v(r,x) = \partial_{xx}^2\Psi(r,x;x) + 2\partial_{xy}^2\Psi(r,x;x) + \partial_{yy}^2\Psi(r,x;x).
    \]
    Applying \Cref{lemma:iv} (It\^o--Kunita--Wentzell formula) to $v(r,X_r)$ under $\P^{\alpha}$, substituting these identities, and using the PDE for $\Psi$ to simplify $\partial_t\Psi + \mathcal{L}^{\alpha_r}_r\Psi = -f(r,X_r,X_r,\alpha^\star_r) + (b(r,X_r,\alpha_r) - b(r,X_r,\alpha^\star_r))\cdot\sigma(r,X_r)^{\top}\partial_x\Psi(r,X_r;X_r)$, we find
    \begin{align*}
        \E^{\P^{\alpha}}\big[v(t_{i+1},X_{t_{i+1}}) - v(t_i,X_{t_i}) \mid \Fc_{t_i} \big] &= \E^{\P^{\alpha}} \bigg[ \int_{t_i}^{t_{i+1}} \Big[ -f(r,X_r,X_r,\alpha^\star_r) \\
        &\quad + \big(b(r,X_r,\alpha_r)-b(r,X_r,\alpha^\star_r)\big) \cdot\sigma(r,X_r)^{\top}\partial_x\Psi(r,X_r;X_r) \\
        &\quad + \mathcal{L}^{\alpha_r}_{r,(y)}\Psi(r,X_r;X_r) \Big] \d r \bigg| \Fc_{t_i} \bigg],
    \end{align*}
    where we define the generator acting exclusively on the $y$-variable under the \emph{arbitrary} control $\alpha$ as:
    \[
        \mathcal{L}^{\alpha_r}_{r,(y)}\Psi \coloneqq b(r,X_r,\alpha_r)\cdot\sigma(r,X_r)^{\top}\partial_y\Psi + \frac{1}{2}\Tr\big[\sigma(r,X_r)\sigma(r,X_r)^{\top} \partial_{yy}^2\Psi\big] + \Tr\big[\sigma(r,X_r)\sigma(r,X_r)^{\top} \partial_{xy}^2\Psi\big].
    \]

    \medskip
    \noindent\textit{Combining.}
    Subtracting Term II from Term I, taking the unconditional expectation $\E^{\P^{\alpha}}$, summing over $i$, and rewriting the result as a single integral via the discretisation map $\tau_N$ yields:
    \begin{align*}
        \E^{\P^{\alpha}}\big[\mathcal{A}^{\Pi_N}\big] &= \E^{\P^{\alpha}} \bigg[ \int_t^S \Big[ f(r,X_r,X_r,\alpha^\star_r) - f(r,X_{\tau_N(r)},X_r,\alpha^\star_r) \\
        &\quad + \big(b(r,X_r,\alpha_r) - b(r,X_r,\alpha^\star_r)\big) \cdot\sigma(r,X_r)^{\top} \big( \partial_x\Psi(r,X_r;X_{\tau_N(r)}) - \partial_x\Psi(r,X_r;X_r) \big) \\
        &\quad - \mathcal{L}^{\alpha_r}_{r,(y)}\Psi(r,X_r;X_r) \Big] \d r \bigg].
    \end{align*}

    \medskip
    \noindent\textit{Passage to the limit.}
    As $|\Pi_N|\to 0$, we have $X_{\tau_N(r)}\to X_r$, $\P$--a.s.\ by the continuity of the trajectories. By the continuity of $f$ and $\partial_x\Psi$ in all their arguments, the first two lines of the integrand converge pointwise to zero. Specifically:
    \begin{itemize}
        \item $f(r,X_r,X_r,\alpha^\star_r) - f(r,X_{\tau_N(r)},X_r,\alpha^\star_r) \longrightarrow 0$ pointwise;
        \item $\partial_x\Psi(r,X_r;X_{\tau_N(r)}) - \partial_x\Psi(r,X_r;X_r) \longrightarrow 0$ pointwise. Since for any fixed $(r, \omega)$ the evaluated state and controls are finite, the drift difference $(b^\alpha_r - b^{\alpha^\star}_r)$ acts as a finite multiplier, guaranteeing that the entire cross-term pointwise converges to zero.
    \end{itemize}
    Both terms are uniformly dominated by the integrable random variable $\mathcal{Z}$ constructed in Part 1 (scaled by constants depending on the Lipschitz continuity of $b$ and the polynomial growth of $\partial_x\Psi$ from \Cref{Assumptions}). Applying the dominated convergence theorem under $\P^{\alpha}$ gives:
    \[
        \lim_{|\Pi_N|\to 0} \E^{\P^{\alpha}}\big[\mathcal{A}^{\Pi_N}\big] = -\E^{\P^{\alpha}} \bigg[ \int_t^S \mathcal{L}^{\alpha_r}_{r,(y)}\Psi(r,X_r;X_r) \d r \bigg].
    \]
    Substituting the explicit form of $\mathcal{L}^{\alpha_r}_{r,(y)}\Psi$ and combining with Part 1 yields the integral inequality stated in the lemma.

    \medskip

\end{proof}

With these lemmata, we can finally conclude the proof of the extended dynamic programming principle.

\begin{proof}[Proof of Theorem \ref{thm: extendedDPP}]
   \Cref{lemma:convergence} establishes that for any admissible control $\alpha \in \Ac$, the value function satisfies the integral inequality
    \begin{align*}
        v(t, x) &\ge \E^{\P^\smalltext{\alpha}} \bigg[ v(S, X_\smallertext{S}) + \int_t^\smallertext{S} f(u,    X_u, X_u, \alpha_u) \d u - \int_t^\smallertext{S} \bigg( b(u, X_u, \alpha_u) \cdot \nabla_y \Psi(u, X_u; X_u) \\ &\quad + \frac{1}{2}\Tr\big[\sigma(u, X_u) \sigma^\top(u, X_u) \nabla_{yy}^2 \Psi(u, X_u; X_u)\big] 
         + \Tr\big[\sigma(u, X_u) \sigma^\top(u, X_u) \nabla_{xy}^2 \Psi(u, X_u; X_u)\big] \bigg) \d u \Bigg].
    \end{align*}
    To conclude the proof, we must show that equality holds when $\alpha = \alpha^\star$. Recall from \Cref{lemma:local_comparison} that if we choose the equilibrium control $\alpha^\star$, the local error term $r(\delta)$ is identically zero. This implies that the discrete-time inequality becomes an equality at every step of the iteration in \Cref{lemma:iterated_inequality}. Specifically, for $\alpha = \alpha^\star$, the telescoping sum argument holds exactly without any $o(1)$ error terms.
    
\medskip
    Consequently, passing to the limit as the mesh size $|\Pi_\smallertext{N}| \longrightarrow 0$ in the equality case proceeds identically to the inequality case, but with equalities throughout. Thus

\begin{align*}
        v(t, x) &= \E^{\P^{\smalltext{t},\smalltext{x},\smalltext{\alpha}^\tinytext{\star}}} \Bigg[ v(S, X_\smallertext{S}) + \int_t^\smallertext{S} f(u,    X_u, X_u, \alpha^\star_u) \d u - \int_t^\smallertext{S} \bigg( b_u( X_u, \alpha^\star_u) \cdot \nabla_y \Psi(u, X_u; X_u) \\
        &\quad + \frac{1}{2}\Tr\big[\sigma_u \sigma_u^\top \nabla_{yy}^2 \Psi(u, X_u; X_u)\big] + \Tr\big[\sigma_u \sigma_u^\top \nabla_{xy}^2 \Psi(u, X_u; X_u)\big] \bigg)\d u \Bigg].
    \end{align*}

    Finally, we recall the definition of the auxiliary function $\Psi(u, x; y)$ as the expected reward with fixed parameter $y$. Differentiating under the expectation sign (justified by \Cref{Assumptions}), we observe that the derivatives $\nabla_y \Psi$, $\nabla_{yy}^2 \Psi$, and $\nabla_{xy}^2 \Psi$ evaluated at $(u, X_u; X_u)$ correspond exactly to the expectation terms appearing in the theorem statement \eqref{eq:extended_dpp}, thereby concluding the proof.

\end{proof}

\section{Proof of the necessity theorem}\label{Appendix:Necessity}

Before proving the main necessity result, we establish the following consequence of the extended dynamic programming principle. 

\begin{lemma}[Martingale optimality property]\label{lemma:martingale_property}
    Let $\alpha^\star \in \Ac$ be an equilibrium control satisfying the extended {\rm DPP} identity \eqref{eq:extended_dpp}. Define the inconsistency adjustment term $\mathcal{K}_t(a)$ for any $a \in A$ by

\[
        \mathcal{K}_t(a) \coloneqq b(t, X_t, a) \cdot \sigma(t, X_t)^\top \nabla_y \mathcal{J}(t, X_t, X_t) + \Tr\bigg[ \bigg(\frac{1}{2}\nabla_{yy}^2 \mathcal{J}(t, X_t, X_t) + \nabla_{xy}^2 \mathcal{J}(t, X_t, X_t)\bigg)\sigma(t, X_t)\sigma(t, X_t)^\top \bigg],\; t\in[0,T].
    \]
    Then, the process $M^{\alpha^\smalltext{\star}}$ defined by
    \[
        M_t^{\alpha^\smalltext{\star}} \coloneqq v(t, X_t) + \int_0^t \big( f(r,  X_r, X_r, \alpha^\star_r) - \mathcal{K}_r(\alpha^\star_r) \big) \d r, \; t \in [0, T],
    \]
    is an $(\F,\P^{\alpha^\smalltext{\star}})$-martingale. Furthermore, for any arbitrary admissible control $\alpha \in \Ac$, the corresponding process $M^\alpha$ is a $(\F,\P^\alpha)$--super-martingale.
\end{lemma}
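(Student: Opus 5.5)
The plan is to read off both claims from the extended DPP \eqref{eq:extended_dpp}, applied between two intermediate dates and conditioned on the information at the first date. First I identify the integrand. Differentiating $\mathcal{J}=\Psi$ under the expectation (as at the end of the proof of \Cref{thm: extendedDPP}), the three correction terms in \eqref{eq:extended_dpp} are exactly
\[
b\cdot\sigma^\top\nabla_y\mathcal{J}+\Tr\big[\sigma\sigma^\top\nabla^2_{xy}\mathcal{J}\big]+\tfrac12\Tr\big[\sigma\sigma^\top\nabla^2_{yy}\mathcal{J}\big],
\]
all evaluated at $(r,X_r,X_r)$. Hence the integrand in \eqref{eq:extended_dpp} is $f(r,X_r,X_r,\alpha_r)-\mathcal{K}_r(\alpha_r)$.

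Fix $0\le s\le t\le T$. Rather than using \eqref{eq:extended_dpp} only with deterministic initial data, I apply \Cref{lemma:convergence} (and its equality version for $\alpha^\star$) started at time $s$ from the random state $X_s$. To do this, I disintegrate $\P^\alpha$ along its r.c.p.d.\ given $\Fc_s$. By \Cref{Thm:Concatenated:M}, for $\P^\alpha$-a.e.\ $\omega$ the conditional law of the path after $s$ is $\P^{s,X_s(\omega),\alpha}$. The control must be understood as shifted by the fixed past $\omega$, which is harmless because admissibility and the bounds are stable under concatenation. Applying the inequality for each such $\omega$ gives, $\P^\alpha$--a.s.,
\[
v(s,X_s)\ge \E^{\P^\alpha}\Big[v(t,X_t)+\int_s^t\big(f(r,X_r,X_r,\alpha_r)-\mathcal{K}_r(\alpha_r)\big)\d r\,\Big|\,\Fc_s\Big].
\]
For $\alpha=\alpha^\star$ this holds with equality, since $\alpha^\star$ attains the supremum and \Cref{lemma:local_comparison} has no error term in that case. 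Adding $\int_0^s(f-\mathcal{K})\d r$, which is $\Fc_s$-measurable, to both sides gives $M^\alpha_s\ge\E^{\P^\alpha}[M^\alpha_t|\Fc_s]$, with equality for $\alpha^\star$.

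It remains to check integrability and adaptedness. Adaptedness follows from the Borel measurability of $v$ and $\mathcal{K}$. For integrability, I use the polynomial growth in \Cref{Assumptions}$(iii)$, passed through the expectation to $v$ and to the derivatives of $\mathcal{J}$, together with the Lipschitz/linear growth of $b$, the boundedness of $\sigma$, and the moment bound \Cref{Assumptions}$(iv)$. Together these give $\E^{\P^\alpha}[\sup_t|M^\alpha_t|]<\infty$. Some care is needed with the $\|a\|^m$ terms for unbounded $A$; I would assume the admissible controls have the required moments, as the paper implicitly does.

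The main obstacle is the conditioning step. I have to justify that the DPP inequality, proved for deterministic $(s,x)$, holds simultaneously for $\P^\alpha$-a.e.\ path, with a null set that does not depend on the (uncountably many) times involved. I would also need to check that the $o(1)$ errors from \Cref{lemma:iterated_inequality} vanish after integration against the conditional kernels. The plan is to use the measurability of $(s,x)\longmapsto\P^{s,x,\alpha}$ from the weak uniqueness assumption. I would fix $s,t$ first, which is enough for the (super)martingale property at those two times. Finally, I would apply dominated convergence to the kernel-integrated error terms, using the same dominating variable $\mathcal{Z}$ as in \Cref{lemma:convergence}.
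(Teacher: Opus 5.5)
Your argument is essentially the paper's: condition on $\Fc_s$, restart the problem from $(s,X_s)$, and recognise $f-\mathcal{K}$ as the integrand of the extended DPP. The martingale property for $\alpha^\star$ is then the equality case, and the super-martingale property for arbitrary $\alpha$ is the inequality case (\Cref{lemma:convergence}) read under the regular conditional probabilities. That second part goes beyond the paper's proof, which only treats $\alpha^\star$. One point should be made explicit. In the equality case, admissibility of the control shifted by the past $\omega$ is not enough; you need that shifted control to coincide with $\alpha^\star$ restarted at $(s,X_s(\omega))$. This holds because $\alpha^\star_r=\alpha^\star(r,X_r)$ is of feedback form, which is precisely the Markov/feedback property the paper invokes.
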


\begin{proof}
   We prove the martingale property for $\alpha^\star$. Fix $0 \le s \le t \le T$.
        We compute the conditional expectation of the increment
        \begin{align*}
            \E^{\P^{\alpha^\star}} \big[ M_t^{\alpha^\star} - M_s^{\alpha^\star} \mid \Fc_s \big] &= \E^{\P^{\alpha^\star}} \left[ v(t, X_t) - v(s, X_s) + \int_s^t \Big( f(r,  X_r, X_r, \alpha^\star_r) - \mathcal{K}_r(\alpha^\star_r) \Big) \d r \bigg| \Fc_s \right].
        \end{align*}
        By the Markov property of the state process $X$ and the feedback nature of $\alpha^\star$, we can rewrite the conditional expectation using the expectation starting at time $s$
        \[
            \E^{\P^{\alpha^\smalltext{\star}}} \big[ M_t^{\alpha^\smalltext{\star}} - M_s^{\alpha^\smalltext{\star}} \big| \Fc_s \big] = \E^{\P^{\smalltext{s}\smalltext{,} \smalltext{X}_\tinytext{s}\smalltext{,} \smalltext{\alpha}^\tinytext{\star}}} \bigg[ v(t, X_t) + \int_s^t \big( f(r,  X_r, X_r, \alpha^\star_r) - \mathcal{K}_r(\alpha^\star_r) \Big) \d r \bigg] - v(s, X_s).
        \]

 We now compare this expression with the extended DPP \eqref{eq:extended_dpp}.
        Observe that the expectation terms appearing in \eqref{eq:extended_dpp} are taken under the measure $\P^{r, X_\smalltext{r}, \alpha^\smalltext{\star}}$. 
        These terms correspond precisely to the derivatives $\nabla_y \mathcal{J}(r, X_r, X_r)$, $\nabla_{yy}^2 \mathcal{J}(r, X_r, X_r)$, and $\nabla_{xy}^2 \mathcal{J}(r, X_r, X_r)$ appearing in our definition of $\mathcal{K}_r(\alpha^\star_r)$.
        Consequently, the integral term involving $\mathcal{K}_r$ exactly cancels the inconsistency cost terms in the extended DPP, leaving the martingale difference equal to zero.

\end{proof}

With this in mind, we go on to provide a rigorous proof of \Cref{thm:necessity}. We assume the existence of a smooth equilibrium control $\alpha^\star$ and smooth value functions $V$ and $\mathcal{J}$, and we show that they necessarily induce a solution to the BSDE system \eqref{BSDE Weak Formulation} and satisfy the Hamiltonian maximisation condition.

\begin{proof}[Proof of Theorem \ref{thm:necessity}]
    The proof proceeds in three steps: first, we identify the auxiliary processes for the parameter derivatives; second, we derive the dynamics of the value function using the extended DPP; and third, we verify the Hamiltonian maximisation condition.

    \medskip
    Let us start by showing that the derivative processes satisfy \eqref{BSDE Weak Formulation}. Recall the definition of the auxiliary value function with fixed preference parameter $y \in \R^n$
    \[
        \mathcal{J}(t, x, y) \coloneqq \E^{\P^{\smalltext{t}\smalltext{,} \smalltext{x}\smalltext{,} \smalltext{\alpha}^\tinytext{\star}}}\bigg[\int_t^T f(s,   y, X_s, \alpha^\star_s) \d s + \xi(y, X_T)\bigg].
    \]
   By the classical Feynman--Kac theorem, for each fixed $y$, the function $(t, x) \longmapsto \mathcal{J}(t, x, y)$ solves the linear PDE
    \begin{equation}\label{eq:PDE_J_fixed_y}
        \partial_t \mathcal{J}(t, x, y) + \mathcal{L}_t^{\alpha^\star(t, x)} \mathcal{J}(t, x, y) + f(t, y, x, \alpha^\star(t, x)) = 0, \; (t,x,y)\in[0,T)\times\R^n\times\R^n,
    \end{equation}
    with terminal condition $\mathcal{J}(T, x, y) = \xi(y, x)$.
    By the hypothesis of \Cref{thm:necessity}, $\mathcal{J}$ is of class $C^{1,2}$ with respect to the spatial and parameter variables. We can therefore differentiate \eqref{eq:PDE_J_fixed_y} with respect to the parameter $y$. Note that the derivatives of the cost functions $\partial_y f$ and $\partial_y \xi$ exist by \Cref{Assumptions}. Let $v_y(t, x) \coloneqq \partial_y \mathcal{J}(t, x, y)$ denote the gradient with respect to $y$. It satisfies the linearised PDE
    \[
        \partial_t v_y(t,x) + \Lc_t^{\alpha^\smalltext{\star}(t,x)} v_y(t,x) + \partial_y f(t, y, x, \alpha^\star(t, x)) = 0, \; (t,x,y)\in[0,T)\times\R^n\times\R^n,\; v_y(T, x) = \partial_y \xi(y, x),\; (x,y)\in\R^n\times\R^n.
    \]
    
    This is a standard linear parabolic equation. The probabilistic representation of its solution is given by the BSDE
    \begin{align*}
        \partial Y^y_t &= \partial_y \xi(y, X_T) + \int_t^T \partial_y f(r,  y, X_r, \alpha^\star_r) \d r - \int_t^T \partial Z^y_r\cdot \d W^{\alpha^\star}_r,\; t\in[0,T],
    \end{align*}
    where we identify $\partial Y^y_t = \partial_y \mathcal{J}(t, X_t, y)$ and $\partial Z^y_t = \sigma(t, X_t)^\top \partial_{xy}^2 \mathcal{J}(t, X_t, y)$. We also let $\alpha^\star_t\coloneqq \alpha^\star(t,X_t)$, abusing notations slightly.

\medskip
    However, the system \eqref{BSDE Weak Formulation} is written under the reference measure $\P$ (where $W$ is an $(F,\P)$--Brownian motion), not $\P^{\alpha^\smalltext{\star}}$. Recall that $\d W^{\alpha^\smalltext{\star}}_r = \d W_r - b(r, X_r, \alpha^\star_r) \d r$. Substituting this change of measure into the equation above yields
    \begin{align*}
        \partial Y^y_t &= \partial_y \xi(y, X_T) + \int_t^T \big( \partial_y f(r,  y, X_r, \alpha^\star_r) + \partial Z^y_r \cdot b(r, X_r, \alpha^\star_r) \big) \d r - \int_t^T \partial Z^y_r\cdot \d W_r.
    \end{align*}
    This matches exactly the second equation of the system \eqref{BSDE Weak Formulation}. The derivation for the Hessian process $\partial\partial Y^y$  follows an identical argument by differentiating the PDE twice.

    \medskip
    Let us now address the dynamics of the process $Y$.We start by determining its driver, keeping in mind that $Y_t \coloneqq V(t, X_t)$. By It\^o's formula
    \[
        \d Y_t = \big( \partial_t V(t, X_t) + \mathcal{L}_t^{\alpha^\smalltext{\star}_\smalltext{t}} V(t, X_t) \big) \d t + \nabla_x V(t, X_t)^\top \sigma(t, X_t) \d W^{\alpha^\smalltext{\star}}_t.
    \]
    \medskip
    
    To identify the drift term $\partial_t V + \Lc^{\alpha^\smalltext{\star}} V$, we use the extended DPP (\Cref{thm: extendedDPP}). Since $\alpha^\star$ is an equilibrium control, \Cref{lemma:martingale_property} implies that the process
    \[
        M_t \coloneqq V(t, X_t) + \int_0^t \big( f(r,  X_r, X_r, \alpha^\star_r) - \mathcal{K}_r(\alpha^\star_r) \big) \d r,\; t\in[0,T],
    \]
      is an $(\F,\P^{\alpha^\smalltext{\star}})$-martingale. Thus, the drift of $M$ must vanish. Calculating it and setting it to zero gives
    \[
        \underbrace{\partial_t V(t, X_t) + \mathcal{L}_t^{\alpha^\smalltext{\star}_\smalltext{t}} V(t, X_t)}_{\text{Drift of } V} + \underbrace{f(t, X_t, X_t, \alpha^\star_t) - \mathcal{K}_t(\alpha^\star_t)}_{\text{Drift from integral}} = 0,\; \diff t\otimes\P\text{\rm--a.e.}
    \]
    Therefore, the generator of the value function is given by
    \begin{equation}\label{eq:generator_V_identified}
        \partial_t V(t, X_t) + \mathcal{L}_t^{\alpha^\smalltext{\star}_\smalltext{t}} V(t, X_t) = - f(t, X_t, X_t, \alpha^\star_t) + \mathcal{K}_t(\alpha^\star_t),\; \diff t\otimes\P\text{\rm--a.e.}
    \end{equation}
    We now define the BSDE variables for the value function. Let $Z_t \coloneqq \sigma(t, X_t)^\top \nabla_x V(t, X_t)$, $t\in[0,T]$. Under the reference measure $\P$, the dynamics of $Y$ is
    \[
        \d Y_t = \big( \partial_t V(t, X_t) + \mathcal{L}_t^{\alpha^\smalltext{\star}_\smalltext{t}} V(t, X_t) - Z_t \cdot b(t, X_t, \alpha^\star_t) \big) \d t + Z_t \cdot\d W_t.
    \]

    Substituting the generator expression from \eqref{eq:generator_V_identified} and expanding $\mathcal{K}_t$
    \begin{align*}
        \d Y_t &= \bigg(  b(t, X_t, \alpha^\star_t) \cdot \sigma(t, X_t)^\top \nabla_y \mathcal{J}(t, X_t, X_t) -f(t, X_t, X_t, \alpha^\star_t)+ \frac12\Tr\big[ \sigma(t, X_t)\sigma(t, X_t)^\top \nabla_{yy}^2 \mathcal{J}(t, X_t, X_t)\big]\\
        &\quad + \Tr\big[\sigma(t, X_t)\sigma(t, X_t)^\top \nabla_{xy}^2 \mathcal{J}(t, X_t, X_t) \bigg]- Z_t \cdot b(t, X_t, \alpha^\star_t) \bigg) \d t   + Z_t \cdot\d W_t.
    \end{align*}

    We identify the terms with the BSDE variables defined above
    \[
        \partial Y^{\smallertext{X}_\smalltext{t}}_t = \nabla_y \mathcal{J}(t, X_t, X_t), \; 
        \partial \partial Y^{\smallertext{X}_\smalltext{t}}_t = \nabla_{yy}^2 \mathcal{J}(t, X_t, X_t), \; 
        \partial Z^{\smallertext{X}_\smalltext{t}}_t = \sigma(t, X_t)^\top \nabla_{xy}^2 \mathcal{J}(t, X_t, X_t), \; t\in[0,T].
    \]
      The driver becomes
    \[
         \text{driver}_t = f(t, X_t, X_t, \alpha^\star_t) + b(t, X_t, \alpha^\star_t) \cdot (Z_t - \sigma(t, X_t)^\top \partial Y^{\smallertext{X}_\smalltext{t}}_t) - \frac{1}{2}\Tr\big[\sigma(t, X_t)\sigma(t, X_t)^\top \partial \partial Y^{\smallertext{X}_\smalltext{t}}_t\big] - \Tr\big[\sigma(t, X_t) \partial Z^{\smallertext{X}_\smalltext{t}}_t\big].
    \]
    
     This matches the drift of the first equation of \eqref{BSDE Weak Formulation}, provided that $\alpha^\star$ maximises the Hamiltonian, which is what we are left to prove.

     \medskip
    To do so, we compare the dynamics of the equilibrium value function under $\alpha^\star$ versus an arbitrary control $\alpha$. Since we know $M^{\alpha^{\smalltext{\star}}}$ is an $(\F,\P^{\alpha^\smalltext{\star}})$-martingale, we have that its drift is exactly zero
    \begin{equation}\label{eq:drift_equality}
        \partial_t V(t, X_t) + \Lc_t^{\alpha^\smalltext{\star}_\smalltext{t}} V(t, X_t) + f(t, X_t, X_t, \alpha^\star) - \mathcal{K}_t(\alpha^\star_t) = 0.
    \end{equation}

    Using \Cref{lemma:martingale_property} we also have that its drift must be non-positive
    \begin{equation}\label{eq:drift_inequality}
        \partial_t V(t, X_t) + \Lc_t^{\alpha_\smalltext{t}} V(t, X_t) + f(t, X_t, X_t, \alpha) - \mathcal{K}_t(\alpha_t) \le 0.
    \end{equation}

\medskip

    Now we simply we subtract the equality \eqref{eq:drift_equality} from the inequality \eqref{eq:drift_inequality}. Note that terms not depending on the control cancel out immediately
    \begin{itemize}
        \item the time derivative $\partial_t V$ cancels;
        \item the second-order diffusion term in $\Lc^{\alpha^\smalltext{\star}_\smalltext{t}}$ and $\Lc^{\alpha_\smalltext{t}}$ involves $\frac{1}{2}\sigma(t,x)\sigma(t, x)^\top \nabla_{xx}^2 V$. Since volatility is uncontrolled, this term is identical for both $\alpha$ and $\alpha^\star$ and cancels;
        \item the second-order trace term inside the inconsistency adjustment $\mathcal{K}_t$ (see \Cref{lemma:martingale_property}) also depends only on $\sigma(x)$ (see \Cref{remark:necessity_clarification}). It is identical in both equations and also cancels.
    \end{itemize}

  We are left with the first-order terms
    \begin{align*}
        &\big( b(t, X_t, \alpha_t) \cdot \sigma(t, X_t)^\top \nabla_x V + f(t,X_t,X_t,\alpha_t) - b(t, X_t, \alpha_t) \cdot \sigma(t, X_t)^\top \partial_y \mathcal{J}(t, X_t, X_t) \big)\\
- &\big( b(t, X_t, \alpha^\star_t) \cdot \sigma(t, X_t)^\top \nabla_x V + f(t,X_t,X_t,\alpha^\star_t) - b(t, X_t, \alpha^\star_t) \cdot \sigma(t, X_t)^\top \partial_y \mathcal{J}(t, X_t, X_t) \big) \le 0.
    \end{align*}
    Rearranging this inequality to isolate the terms dependent on $a$ and identifying $\sigma^{-1} Z_t = \nabla_x V$ and $\partial Y_t^{\smallertext{X}_\smalltext{t}} = \nabla_y \mathcal{J}(t, X_t, X_t)$, we obtain:

    \[f(t, X_t, X_t, \alpha_t) + b(t, X_t, \alpha_t) \cdot \big( Z_t - \sigma(t, X_t)^\top \partial Y^{\smallertext{X}_\smalltext{t}}_t \big)
\le
f(t, X_t, X_t, \alpha^\star_t) + b(t, X_t, \alpha^\star_t) \cdot \big( Z_t - \sigma(t, X_t)^\top \partial Y^{\smallertext{X}_\smalltext{t}}_t \big).\]
    Since this holds for any arbitrary admissible control $\alpha_t$, it implies that $\alpha^\star_t$ maximises the expression $\d t \otimes \d \P$-a.e., concluding the proof.
\end{proof}

\section{Proof of the verification theorem}\label{Appendix:verification}

In this section we present the proof of \Cref{thm:verification}. We will start by proving that the BSDE system, which was introduced informally in \Cref{sec:mainresults}, has a close relation to the problem. Let us introduce the following notation:
\[
h_t(y, x, z, a) = f(t, y,x, a) + z \cdot b(t, x,a).
\]
 
  We also denote by $\mathcal{L}_{t,(y)}^{\alpha}$ the generator associated with the control $\alpha$ but acting on the variable $y$. 
    For a function $\psi(y)$, we define:
    \[
       \mathcal{L}_{t,(y)}^{\alpha_\smalltext{t}} \psi(y) \coloneqq b(t, X_t, \alpha_t) \cdot \sigma(t, X_t)^\top \nabla_y \psi(y) + \frac{1}{2}\Tr\big[\sigma(t, X_t)\sigma(t, X_t)^\top \nabla_{yy}^2 \psi(y)\big].
    \]

In the spirit of \citeauthor{hernandez2023me} \cite{hernandez2023me}, for a control process $\alpha \in \Ac$ and an initial condition $(t, x)$ for the state process $X$, we define the following auxiliary processes $(\mathcal{Y}^{y, \alpha}, \mathcal{Z}^{y, \alpha})$. For a fixed parameter $y \in \R^n$, they solve the BSDE

\begin{equation}\label{eq:aux_verification}
    \begin{cases}
  \displaystyle      \mathcal{Y}_s^{y, \alpha} = \xi(y, X_T) + \int_s^T h_u\big(y, X_u, \mathcal{Z}_u^{y, \alpha}, \alpha_u\big) \mathrm{d}u -\int_s^T \mathcal{Z}_u^{y, \alpha}  \mathrm{d}W_u, \;
 s \in [t, T],\\[0.8em]
\displaystyle        \partial\mathcal{Y}_s^{y, \alpha} =  \nabla_{y}\xi(y, X_T) + \int_s^T \big( \nabla_{y} f(u, y, X_u, \alpha_u) + \partial\mathcal{Z}_u^{y, \alpha} b(u, X_u, \alpha_u) \big) \mathrm{d}u -\int_s^T  \partial \mathcal{Z}_u^{y, \alpha}  \mathrm{d}W_u,\;
 s\in[t,T],\\[0.8em]
    \displaystyle     \partial \partial\mathcal{Y}_s^{y, \alpha} =  \nabla_{yy}^2 \xi(y, X_T) 
+ \int_s^T \big( \nabla_{yy}^2 f(u, y, X_u, \alpha_u) + \partial\partial\mathcal{Z}_u^{y, \alpha} b(u, X_u, \alpha_u) \big) \mathrm{d}u -\int_s^T  \partial \partial \mathcal{Z}_u^{y, \alpha}  \mathrm{d}W_u,\;
 s\in[t,T].
\end{cases}
\end{equation}

We see that the structure of the system is the same as the one of \eqref{BSDE Weak Formulation}, and we will impose the same concept of solution. We start the analysis with the following lemma.
\begin{lemma} \label{lemma:auxiliaryforverification}
    We have that $\mathcal{Y}_t^{y, \alpha} = J(t, x, y, \alpha)$.

\end{lemma}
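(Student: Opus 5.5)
The plan is the standard linear-BSDE representation argument, run under a Girsanov change of measure. Fix $y\in\R^n$ and the control $\alpha\in\Ac$. The first equation in \eqref{eq:aux_verification} is linear in $\mathcal{Z}^{y,\alpha}$, since $h_u(y,X_u,z,\alpha_u)=f(u,y,X_u,\alpha_u)+z\cdot b(u,X_u,\alpha_u)$. The idea is therefore to absorb the term $\mathcal{Z}^{y,\alpha}_u\cdot b(u,X_u,\alpha_u)\,\mathrm{d}u$ into the stochastic integral by switching from $\P$ to $\P^{\alpha}$. Under $\P^{\alpha}$, the process $W^{\alpha}=W-\int_0^\cdot b(r,X_r,\alpha_r)\mathrm{d}r$ is a Brownian motion, so the equation becomes
\[
\mathcal{Y}^{y,\alpha}_s=\xi(y,X_T)+\int_s^T f(u,y,X_u,\alpha_u)\,\mathrm{d}u-\int_s^T\mathcal{Z}^{y,\alpha}_u\cdot\mathrm{d}W^{\alpha}_u,\quad s\in[t,T],\ \P^{\alpha}\text{--a.s.}
\]
Here I use that $\P^\alpha\sim\P$, so $\P$--a.s. identities remain $\P^{\alpha}$--a.s.

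The key step is to show that $N_s\coloneqq\int_t^s\mathcal{Z}^{y,\alpha}_u\cdot\mathrm{d}W^{\alpha}_u$ is a true $(\F,\P^{\alpha})$-martingale, not merely a local martingale.

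1. By the equation above, $N_s$ equals $\mathcal{Y}^{y,\alpha}_s-\mathcal{Y}^{y,\alpha}_t+\int_t^s f(u,y,X_u,\alpha_u)\mathrm{d}u$.
2. Each term on the right is dominated by $\sup_{s\in[t,T]}|\mathcal{Y}^{y,\alpha}_s|+\int_t^T|f(u,y,X_u,\alpha_u)|\mathrm{d}u$.
3. By \Cref{Assumptions}.$(iii)$--$(iv)$, the polynomial growth of $f$ and the moment bounds on $X$ put the integral term in $\L^1(\P^\alpha)$.
4. For the first term, use Hölder: $\E^{\P^\alpha}\big[\sup_s|\mathcal{Y}^{y,\alpha}_s|\big]\le \E^{\P}\big[(\mathrm{d}\P^\alpha/\mathrm{d}\P)^2\big]^{1/2}\|\mathcal{Y}^{y,\alpha}\|_{\mathbb{S}^2(\P)}$.
5. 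Conclude that $\sup_s|N_s|\in\L^1(\P^\alpha)$, so the local martingale $N$ is of class (D) and hence a true martingale.

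The main obstacle is step 4, which requires a square-integrable density $\mathrm{d}\P^{\alpha}/\mathrm{d}\P$. Admissibility through \eqref{eq:integalpha} does not give this automatically. If the square-integrability is not available, I will fall back to localisation. Take stopping times $\tau_k\uparrow T$ reducing $N$, write the identity at $s=t$ up to $\tau_k$, and take $\P^\alpha$-expectations. Then let $k\to\infty$, using continuity of $\mathcal{Y}^{y,\alpha}$ and dominated convergence. The domination for the $f$-term comes from the moment bounds, and the $\mathcal{Y}_{\tau_k}$ term is handled by uniform integrability, inherited from the $\mathbb{S}^2$-type bound. I expect this limiting argument to be where care is needed.

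With the martingale property in hand, take conditional expectation given $\Fc_t$ under $\P^{\alpha}$ at $s=t$:
\[
\mathcal{Y}^{y,\alpha}_t=\E^{\P^{\alpha}}\Big[\xi(y,X_T)+\int_t^Tf(u,y,X_u,\alpha_u)\mathrm{d}u\,\Big|\,\Fc_t\Big].
\]
Evaluated on $\{X_t=x\}$ through the r.c.p.d. $\P^{t,x,\alpha}=(\P^{\alpha})^t_x$ of \Cref{sec:setting}, the right side is exactly $J(t,x,y,\alpha)$. This concludes the proof; by the Blumenthal zero--one law it is a deterministic identity when the problem is started at $(t,x)$.

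The second and third equations of \eqref{eq:aux_verification} are not needed here. The same Girsanov argument, with $\nabla_y\xi,\nabla_y f$ and $\nabla^2_{yy}\xi,\nabla^2_{yy}f$ in place of $\xi,f$, gives the analogous representations $\partial\mathcal{Y}^{y,\alpha}_t=\nabla_yJ(t,x,y,\alpha)$ and $\partial\partial\mathcal{Y}^{y,\alpha}_t=\nabla^2_{yy}J(t,x,y,\alpha)$. These follow after differentiating under the expectation, which is justified by the polynomial growth in \Cref{Assumptions}.$(iii)$.
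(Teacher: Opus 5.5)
Your route is the paper's. Under the controlled measure, the term $\mathcal{Z}^{y,\alpha}_u\cdot b(u,X_u,\alpha_u)$ is absorbed into $W^{\alpha}$, the drift collapses to $-f(u,y,X_u,\alpha_u)$, and taking expectations removes the stochastic integral. The paper works directly under $\P^{t,x,\alpha}$ rather than conditioning $\P^{\alpha}$ on $\Fc_t$ and passing through the r.c.p.d.; that difference is cosmetic. The paper simply asserts that the stochastic integral has zero mean. You are right that this is the only real issue, but your treatment does not close it.

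Your H\"older route needs $\mathrm{d}\P^{\alpha}/\mathrm{d}\P\in\L^2(\P)$, which, as you note, admissibility does not give. The localisation fallback needs the same thing in disguise. Passing to the limit in $\mathcal{Y}^{y,\alpha}_t=\E^{\P^{\alpha}}\big[\mathcal{Y}^{y,\alpha}_{\tau_k}+\int_t^{\tau_k}f(u,y,X_u,\alpha_u)\,\mathrm{d}u\,\big|\,\Fc_t\big]$ requires $(\mathcal{Y}^{y,\alpha}_{\tau_k})_k$ to be uniformly integrable under $\P^{\alpha}$. This is \emph{not} inherited from $\mathcal{Y}^{y,\alpha}\in\mathbb{S}^2(\R,\F,\P)$. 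Indeed, $\E^{\P^{\alpha}}\big[\sup_{s}|\mathcal{Y}^{y,\alpha}_s|\big]=\E^{\P}\big[\frac{\mathrm{d}\P^{\alpha}}{\mathrm{d}\P}\sup_{s}|\mathcal{Y}^{y,\alpha}_s|\big]$ is the expectation of an $\L^1(\P)$ variable times an $\L^2(\P)$ variable, which need not be finite. Nothing in the $\P$-integrability excludes that $N=\int_t^{\cdot}\mathcal{Z}^{y,\alpha}_u\cdot\mathrm{d}W^{\alpha}_u$ is a strict $\P^{\alpha}$-local martingale when $b$ is unbounded, as \Cref{Assumptions} allows.

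What is actually needed is integrability of the solution under the controlled measure. For example, $\sup_s|\mathcal{Y}^{y,\alpha}_s|\in\L^1(\P^{\alpha})$ suffices (say $\mathcal{Y}^{y,\alpha}\in\mathbb{S}^2(\R,\F,\P^{\alpha})$), or a square-integrable density, e.g.\ $b$ bounded. The paper assumes this tacitly: in the proof of \Cref{thm:verification} it writes $Z\in\mathbb{H}^2(\R^d,\F,\P^{t,x,\alpha})$ as if this followed from \Cref{def:bsde_solution}. State it as a hypothesis, and your class~(D) argument then finishes the proof.

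A minor point concerns your step 3. The growth bound in \Cref{Assumptions}.$(iii)$ contains $\|a\|^m$, so $\int_t^T|f(u,y,X_u,\alpha_u)|\,\mathrm{d}u\in\L^1(\P^{\alpha})$ also needs moments of $\alpha$, not only of $X$. This is implicit in $J(t,x,y,\alpha)$ being finite.
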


 \begin{proof}
    We work under the probability measure $\P^{t, x, \alpha}$, under which $X_t = x$ and the dynamics on $[t, T]$ are controlled by $\alpha$.
    Recall that under this measure, the Brownian motion is $W^\alpha$.
    Substituting the dynamics of $X$ into the first equation of \eqref{eq:aux_verification}, we have
    \[
        \d \mathcal{Y}_u^{y, \alpha} = -\big( h_u(y, X_u, \mathcal{Z}_u^{y, \alpha}, \alpha_u) - \mathcal{Z}_u^{y, \alpha} \cdot b(u, X_u, \alpha_u) \big) \d u + \mathcal{Z}_u^{y, \alpha} \cdot \d W^\alpha_u, \; u \in [t, T].
    \]
    The drift term simplifies to $-f(u, y, X_u, \alpha_u)$. Integrating from $t$ to $T$
    \[
        \mathcal{Y}_t^{y, \alpha} = \xi(y, X_T) + \int_t^T f(u, y, X_u, \alpha_u) \diff u - \int_t^T \mathcal{Z}_u^{y, \alpha} \cdot \diff W^\alpha_u.
    \]
    Taking expectations under $\P^{t, x, \alpha}$ eliminates the stochastic integral:
    \[
        \mathcal{Y}_t^{y, \alpha} = \E^{\P^{\smalltext{t}, \smalltext{x}, \smalltext{\alpha}}} \bigg[ \int_t^T f(u, y, X_u, \alpha_u) \diff u + \xi(y, X_T) \bigg].
    \]
    By definition, the right-hand side is exactly the cost functional $J(t, x, y, \alpha)$.
\end{proof}

In other words, the process $\mathcal{Y}_t^{y, \alpha}$ captures the dynamics of the reward functional if we fix the value $y$. Note that this could have been deduced from the PDE of $\mathcal{J}$, as it is easy to show that $\mathcal{Y}^{y, \alpha^\star}_t = \mathcal{J}(t, X_t, y)$. The idea now is to fix an equilibrium control $\alpha^\star$ and to understand the corresponding process $\mathcal{Y}^{\smallertext{X}_\smalltext{t}, \alpha^\star}_t$. One key observation is that it can be understood from two perspectives:
\medskip

\begin{enumerate}
    \item[$(i)$] from that of \eqref{eq:aux_verification}, fixing the value of $y= X_t$ and considering the resulting dynamics. This shows that $\mathcal{Y}_t^{\smallertext{X}_\smalltext{t}, \alpha^\star} = J(t, x, x, \alpha^\star) = V(t, x) = V(t, X_t);$
    \item [$(ii)$] or seen as an Itô process: we have defined a uni-parametric family of processes, and consider $\mathcal{Y}_t^{\smallertext{X}_\smalltext{t}, \alpha^\star}$ as a composition of the family with a process. In other words, we let the superscript parameter change as time advances.
\end{enumerate}

In the informal derivation of the BSDE system, we wrote $Y_t = V(t, X_t)$. The first goal of the section is that, starting from the BSDE system \eqref{BSDE Weak Formulation}, we can recover this rigorously. As we already have that $\mathcal{Y}_t^{\smallertext{X}_\smalltext{t}, \alpha^\star} = V(t, X_t)$, we must show now that $\mathcal{Y}_t^{\smallertext{X}_\smalltext{t}, \alpha^\star} = Y_t$ under suitable assumptions, which happen to be the ones introduced in \Cref{sec:mainresults}.

\medskip
\begin{proposition}\label{prop:equalityYprocess}
    Let {\rm\Cref{Assumptions}} hold. Let $(Y, Z, \partial Y, \partial Z, \partial\partial Y, \partial\partial Z)$ be a solution to \eqref{BSDE Weak Formulation} in the sense of  {\rm\Cref{def:bsde_solution}} with $\alpha^\star_t = \mathcal{V}^\star(t, X_t, Z_t, \partial Y_t^{\smallertext{X}_\smalltext{t}}).$ Then, we have that under $\mathbb{P}^{\alpha^\smalltext{\star}}$
    \[
    Y_t = \mathcal{Y}_t^{\smallertext{X}_\smalltext{t}, \alpha^\smalltext{\star}},\; t\in[0,T].
    \]
\end{proposition}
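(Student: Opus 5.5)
The strategy is to show that the diagonal process $\tilde Y_t\coloneqq \mathcal{Y}^{X_t,\alpha^\star}_t$, obtained by evaluating the random field $(\mathcal{Y}^{y,\alpha^\star})_{y\in\R^n}$ of \eqref{eq:aux_verification} along $X$, solves the same BSDE as $Y$, the first equation of \eqref{BSDE Weak Formulation}. Uniqueness for that scalar Lipschitz BSDE then gives $Y=\tilde Y$.

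\textbf{Identifying the parametric families.} The second and third equations of \eqref{eq:aux_verification} with $\alpha=\alpha^\star$ coincide with the second and third equations of \eqref{BSDE Weak Formulation}, since $\alpha^\star$ is the same process in both. By uniqueness of linear BSDEs for each fixed $y$, we get $\partial\mathcal{Y}^{y,\alpha^\star}=\partial Y^y$, $\partial\mathcal{Z}^{y,\alpha^\star}=\partial Z^y$, and similarly for the Hessian pair. Next we argue that $y\longmapsto\mathcal{Y}^{y,\alpha^\star}$ is $C^2$, with derivatives $\partial\mathcal{Y}^{y,\alpha^\star}$ and $\partial\partial\mathcal{Y}^{y,\alpha^\star}$, and likewise for the $\mathcal{Z}$'s. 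This follows from differentiating the linear BSDE in $y$ (difference quotients and a priori estimates, using the $C^2$ regularity and polynomial growth in \Cref{Assumptions}). The continuity in $y$ granted by the locally uniform spaces of \Cref{def:bsde_solution} (via \Cref{lemma:embedding_spaces}) supplies the continuous versions needed below.

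\textbf{Applying Lemma~\ref{lemma:iv}.} Write $\mathrm{d}\mathcal{Y}^{y,\alpha^\star}_t=-h_t(y,X_t,\mathcal{Z}^{y,\alpha^\star}_t,\alpha^\star_t)\,\mathrm{d}t+\mathcal{Z}^{y,\alpha^\star}_t\cdot\mathrm{d}W_t$ and $\mathrm{d}X_t=\sigma(t,X_t)\,\mathrm{d}W_t$ under $\P$. The It\^o--Kunita--Wentzell formula then gives
\[
\mathrm{d}\tilde Y_t=\Big(-h_t\big(X_t,X_t,\mathcal{Z}^{X_t}_t,\alpha^\star_t\big)+\Tr\big[\sigma(t,X_t)(\partial Z^{X_t}_t)^\top\big]+\tfrac12\Tr\big[\sigma\sigma^\top(t,X_t)\,\partial\partial Y^{X_t}_t\big]\Big)\mathrm{d}t+\big(\mathcal{Z}^{X_t}_t+\sigma(t,X_t)^\top\partial Y^{X_t}_t\big)\cdot\mathrm{d}W_t .
\]
Set $\tilde Z_t\coloneqq \mathcal{Z}^{X_t}_t+\sigma^\top\partial Y^{X_t}_t$. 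Then
\[
h_t\big(X_t,X_t,\mathcal{Z}^{X_t}_t,\alpha^\star_t\big)=f(t,X_t,X_t,\alpha^\star_t)+b(t,X_t,\alpha^\star_t)\cdot\big(\tilde Z_t-\sigma^\top\partial Y^{X_t}_t\big).
\]
Hence the drift of $\tilde Y$ equals $-\big[f+b\cdot(\tilde Z-\sigma^\top\partial Y^{X}) -\tfrac12\Tr[\sigma\sigma^\top\partial\partial Y^{X}]-\Tr[\sigma(\partial Z^X)^\top]\big]$, evaluated at $\alpha^\star_t$. The terminal value is $\tilde Y_T=\xi(X_T,X_T)$.

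\textbf{Concluding by uniqueness.} The process $(Y,Z)$ satisfies the same equation, with driver $H$ evaluated at $\alpha^\star_t=\mathcal{V}^\star(t,X_t,Z_t,\partial Y^{X_t}_t)$. Consider $\delta Y\coloneqq Y-\tilde Y$ and $\delta Z\coloneqq Z-\tilde Z$. The function $a\mapsto f(t,x,x,a)+b(t,x,a)\cdot(z-\sigma^\top\gamma)$ is evaluated at the same point $a=\alpha^\star_t$ in both equations; only its linear dependence on $z$ differs. Consequently $\delta Y$ solves the linear BSDE
\[
\delta Y_t=\int_t^T b(r,X_r,\alpha^\star_r)\cdot\delta Z_r\,\mathrm{d}r-\int_t^T\delta Z_r\cdot\mathrm{d}W_r,\qquad \delta Y_T=0.
\]
Under $\P^{\alpha^\star}$ this reads $\delta Y_t=-\int_t^T\delta Z_r\cdot\mathrm{d}W^{\alpha^\star}_r$, so $\delta Y\equiv0$ once the stochastic integral is a true martingale. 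This last point is what the statement "under $\P^{\alpha^\star}$" refers to.

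\textbf{Main obstacle.} The hard step is justifying Lemma~\ref{lemma:iv}: it requires $y\mapsto\mathcal{Y}^{y,\alpha^\star}_t$ to be $C^2$ and $y\mapsto\mathcal{Z}^{y,\alpha^\star}_t$ to be $C^1$ pathwise, jointly continuous in $(t,y)$. Under \Cref{def:bsde_solution} we only have continuity in $\mathbb{S}^2$ and $\mathbb{H}^2$ norms. I would first upgrade to higher $L^p$ estimates, uniform on compacts, and apply Kolmogorov's continuity criterion to obtain regular versions, localising with stopping times at $\{\|X\|\le R\}$. A second technical point is the martingale property of $\int\delta Z\cdot\mathrm{d}W^{\alpha^\star}$. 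Here I would use the moment bounds of \Cref{Assumptions}$(iv)$, together with the boundedness of $\sigma$ and the local uniform bounds on $\partial Y$, to get $\tilde Z\in\mathbb{H}^2(\P^{\alpha^\star})$. If needed, I would fall back on a localisation argument combined with the class-(D) property of $\delta Y$.
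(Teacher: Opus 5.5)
Your proposal is correct and follows essentially the paper's route. You apply the It\^o--Kunita--Wentzell formula to the diagonal process $\mathcal{Y}^{X_t,\alpha^\star}_t$, identify its volatility with $\mathcal{Z}^{X_t,\alpha^\star}_t+\sigma(t,X_t)^\top\partial Y^{X_t}_t$ and its $y$-derivatives with $\partial Y$ and $\partial\partial Y$, and conclude by uniqueness; the only cosmetic difference is that you work under $\P$ and pass to $\P^{\alpha^\star}$ in the final step, whereas the paper Girsanov-transforms the equation for $Y$ first. The technical points you flag (pathwise regularity in $y$ for the formula, and the true martingale property of $\int\delta Z\cdot\mathrm{d}W^{\alpha^\star}$ under $\P^{\alpha^\star}$) are not addressed in the paper's proof, so your treatment is, if anything, more careful.
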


\begin{proof}
    As the equilibrium control $\alpha^\star$ maximises the Hamiltonian $H$, we substitute the optimal drift into the first equation of \eqref{BSDE Weak Formulation}. Recall that the Hamiltonian is given by:
    \[
        H(t, x, z, \gamma, \eta, \rho) = f(t, x, x, \alpha^\star_t) + b(t, x, \alpha^\star_t) \cdot (z - \sigma(t, x)^\top\gamma) - \frac{1}{2}\Tr\big[\sigma(t, x)\sigma(t, x)^\top \eta\big] - \Tr\big[\sigma(t, x) \rho^\top\big].
    \]
    Thus, the dynamics of $Y$ under the reference measure $\P$ is
    \begin{align*}
        \d Y_t &= -\bigg( f(t, X_t, X_t, \alpha^\star_t) + b(t, X_t, \alpha^\star_t) \cdot \big(Z_t - \sigma(t, X_t)^\top \partial Y_t^{\smallertext{X}_\smalltext{t}}\big) - \frac{1}{2}\Tr\big[\sigma(t, X_t) \sigma(t, X_t)^\top \partial\partial Y_t^{\smallertext{X}_\smalltext{t}}\big] - \Tr\big[\sigma(t, X_t) (\partial Z_t^{\smallertext{X}_\smalltext{t}})^\top\big] \bigg) \d t \\
        &\quad+ Z_t \d W_t.
    \end{align*}
    We change the measure to $\mathbb{P}^{\alpha^\smalltext{\star}}$ using the transformation $\d W_t = \d W^{\alpha^\smalltext{\star}}_t + b(t, X_t, \alpha^\star_t) \d t$. The term $Z_t \cdot b(t, X_t,\alpha^\star_t)$ arising from the Girsanov transformation cancels with the term $-b(t, X_t,\alpha^\star_t) \cdot Z_t$ inside the Hamiltonian driver. This yields the following dynamics for $Y$ under $\mathbb{P}^{\alpha^\smalltext{\star}}$
    \begin{align}\label{eq:bsde_Y_under_P_alpha}
       Y_t &= \xi(X_T, X_T) + \int_t^T \bigg( f(u, X_u, X_u, \alpha^\star_u) + b(u, X_u, \alpha^\star_u) \cdot \sigma(u, X_u)^\top \partial Y^{\smallertext{X}_\smalltext{u}}_u \nonumber \\
       &\quad + \frac{1}{2}\Tr\big[\sigma(u, X_u) \sigma(u, X_u)^\top \partial\partial Y^{\smallertext{X}_\smalltext{u}}_u\big] + \Tr\big[\sigma(u, X_u) \partial (Z^{\smallertext{X}_\smalltext{u}}_u)^\top\big] \bigg) \mathrm{d}u - \int_t^T  Z_u \cdot \mathrm{d}W^{\alpha^\smalltext{\star}}_u.
    \end{align}

    Now we apply the Itô--Kunita--Wentzell formula to the composed process $\mathcal{Y}_t^{\smallertext{X}_\smalltext{t}, \alpha^\smalltext{\star}}$.
    From the auxiliary system \eqref{eq:aux_verification}, for a fixed $y$, the process $\mathcal{Y}^{y, \alpha^\smalltext{\star}}$ satisfies the dynamics under $\mathbb{P}^{\alpha^\smalltext{\star}}$
    \[
        \d \mathcal{Y}^{y, \alpha^\smalltext{\star}}_u = -f(u, y, X_u, \alpha^\star_u) \d u + \mathcal{Z}^{y, \alpha^\smalltext{\star}}_u \cdot \d W^{\alpha^\smalltext{\star}}_u.
    \]
    The dynamics of the composition $\mathcal{Y}_t^{\smallertext{X}_\smalltext{t}, \alpha^\smalltext{\star}}$ is given by
    \begin{align*}
        \d \mathcal{Y}_t^{\smallertext{X}_\smalltext{t}, \alpha^\smalltext{\star}} &= \d \mathcal{Y}^{y, \alpha^\smalltext{\star}}_t \big|_{y=\smallertext{X}_\smalltext{t}} + \partial_y \mathcal{Y}^{\smallertext{X}_\smalltext{t}, \alpha^\smalltext{\star}}_t \cdot \d X_t + \frac{1}{2}\Tr\big[ \sigma(t, X_t)\sigma(t, X_t)^\top \partial_{yy}^2 \mathcal{Y}^{\smallertext{X}_t, \alpha^\star}_t \big] \d t  + \Tr\big[ \sigma(t, X_t) (\partial \mathcal{Z}^{\smallertext{X}_\smalltext{t}, \alpha^\smalltext{\star}}_t)^\top \big] \d t,
    \end{align*}
    where $\partial \mathcal{Z}^{\smallertext{X}_t, \alpha^\smalltext{\star}}$ denotes the gradient of the field $\nabla_y \mathcal{Z}^{y, \alpha^\smalltext{\star}}|_{y=\smallertext{X}_\smalltext{t}}$.
    Substituting $\d X_t = \sigma(t, X_t) b(t, X_t, \alpha^\star_t) \d t + \sigma(t, X_t) \d W^{\alpha^\smalltext{\star}}_t$
    \begin{align*}
        \d \mathcal{Y}_t^{\smallertext{X}_\smalltext{t}, \alpha^\smalltext{\star}} &= \bigg( -f(t, X_t, X_t, \alpha^\star_t) + \partial_y \mathcal{Y}^{\smallertext{X}_\smalltext{t}, \alpha^\smalltext{\star}}_t \cdot \sigma(t, X_t) b(t, X_t, \alpha^\star_t) + \frac{1}{2}\Tr\big[ \sigma(t, X_t) \sigma(t, X_t)^\top \partial_{yy}^2 \mathcal{Y}^{\smallertext{X}_\smalltext{t}, \alpha^\smalltext{\star}}_t \big] \\
        &\quad + \Tr\big[ \sigma(t, X_t) (\partial \mathcal{Z}^{\smallertext{X}_\smalltext{t}, \alpha^\smalltext{\star}}_t)^\top \big] \bigg) \d t + \big( \mathcal{Z}^{\smallertext{X}_\smalltext{t}, \alpha^\smalltext{\star}}_t + \partial_y \mathcal{Y}^{\smallertext{X}_\smalltext{t}, \alpha^\smalltext{\star}}_t \cdot \sigma(t, X_t) \big) \cdot \d W^{\alpha^\smalltext{\star}}_t.
    \end{align*}
    Rearranging the drift term $\partial_y \mathcal{Y} \cdot \sigma b = b \cdot \sigma^\top \partial_y \mathcal{Y}$, and identifying the cross-variation trace term $\Tr[\sigma (\partial \mathcal{Z})^\top]$ with the Hamiltonian term $\Tr[\sigma \partial Z]$, we observe that $\mathcal{Y}^{\smallertext{X}_\smalltext{t}, \alpha^\smalltext{\star}}$ satisfies the exact same linear BSDE as $Y$ derived in \eqref{eq:bsde_Y_under_P_alpha}.

    \medskip
    Specifically, we identify the variable $Z_t$ with the diffusion term $\mathcal{Z}^{\smallertext{X}_\smalltext{t}, \alpha^\smalltext{\star}}_t + \sigma_t^\top(X_t) \partial Y^{\smallertext{X}_\smalltext{t}}_t$, and we identify the auxiliary field derivatives $\partial_y \mathcal{Y}$ and $\partial_{yy}^2 \mathcal{Y}$ with the solution processes $\partial Y$ and $\partial\partial Y$ (which satisfy the same equations by uniqueness).
    Thus, by the uniqueness of solutions to BSDEs, we conclude $Y_t = \mathcal{Y}_t^{\smallertext{X}_\smalltext{t}, \alpha^\smalltext{\star}}$.
\end{proof}

We remark that we have managed to arrive at the BSDE system that we had deduced from the PDE system appearing in \cite{bjork2021time} from purely probabilistic arguments, namely the Itô--Kunita--Wentzell formula. With the central \Cref{prop:equalityYprocess} proven, we move on with the proof of \Cref{thm:verification}.

\begin{proof}[Proof of Theorem \ref{thm:verification}]
    Let $(t, x)$ be a fixed pair in $[0, T] \times \R^n$ and let $\alpha$ be an arbitrary admissible control in $\Ac$. We aim to verify the equilibrium condition given in \Cref{Def:equilibrium}. For a strictly positive time step $\ell > 0$, we consider the concatenated control strategy $\hat{\alpha}$ defined by $\hat{\alpha} \coloneqq  \alpha \otimes_\ell \alpha^\star$. We analyse the difference between the cost of this perturbed strategy and the cost of the equilibrium strategy, $J(t, x, \hat{\alpha}) - J(t, x, \alpha^\star)$.
   
    \medskip
     Recall that the value function is defined as $v(t, x) = J(t, x, \alpha^\star)$. We expand the cost of the perturbed strategy using the definition of the cost functional
    \begin{align*}
        J(t, x, \hat{\alpha}) &= \E^{\P^{\smalltext{t}\smalltext{,} \smalltext{x}\smalltext{,} \smalltext{\alpha}}}\bigg[ \int_t^{t+\ell} f(r,  x, X_r, \alpha_r) \diff r + \int_{t+\ell}^T f(r,  x, X_r, \alpha^\star_r) \diff r + \xi(x, X_T) \bigg] \\
        &= \E^{\P^{\smalltext{t}\smalltext{,} \smalltext{x}\smalltext{,} \smalltext{\alpha}}}\bigg[ \int_t^{t+\ell} f(r,  x, X_r, \alpha_r) \diff r + \E^{\P^{\smalltext{t}\smalltext{,} \smalltext{x}\smalltext{,}\smalltext{\alpha}}}\bigg[ \int_{t+\ell}^T f(r,  x, X_r, \alpha^\star_r) \diff r + \xi(x, X_T) \bigg| \Fc_{t+\ell} \bigg] \bigg].
    \end{align*}

    Using the concatenated measure property, we identify the conditional expectation as the auxiliary value function $\mathcal{Y}$, evaluated with the fixed preference parameter $x$ under the equilibrium control $\alpha^\star$:
    \[
        J(t, x, \hat{\alpha}) = \E^{\P^{\smalltext{t}\smalltext{,}\smalltext{ x}\smalltext{,} \smalltext{\alpha}}}\bigg[ \int_t^{t+\ell} f(r,  x, X_r, \alpha_r) \diff r + \mathcal{Y}_{t+\ell}^{x, \alpha^\smalltext{\star}} \bigg].
    \]
    We add and subtract the equilibrium value function at time $t+\ell$, which satisfies the relation $v(t+\ell, X_{t+\ell}) = \mathcal{Y}_{t+\ell}^{\smallertext{X}_{\smalltext{t}\smalltext{+}\smalltext{\ell}}, \alpha^\smalltext{\star}}$. This yields
    \[
        J(t, x, \hat{\alpha}) = \E^{\P^{\smalltext{t}\smalltext{,} \smalltext{x}\smalltext{,} \smalltext{\alpha}}}\bigg[ \int_t^{t+\ell} f(r,  x, X_r, \alpha_r) \diff r + v(t+\ell, X_{t+\ell}) \bigg] + I,
    \]
    where the term $I$ captures the cost of inconsistency due to the changing preference parameter
    \[
        I \coloneqq  \E^{\P^{\smalltext{t}\smalltext{,} \smalltext{x}\smalltext{,} \smalltext{\alpha}}}\big[ \mathcal{Y}_{t+\ell}^{x, \alpha^\smalltext{\star}} - \mathcal{Y}_{t+\ell}^{\smallertext{X}_{\smalltext{t}\smalltext{+}\smalltext{\ell}}, \alpha^\smalltext{\star}} \big].
    \]

    Since we assumed that $v(t,x)$ is in $C^{1,2}([0,T) \times \R^n)$, let us apply It\^o's formula to the process $v(s, X_s)$ on the interval $[t, t+\ell]$ under the measure $\P^{t, x, \alpha}$. Note that $\d X_r = b(r, X_r, \alpha_r)\d r + \sigma(r, X_r)\d W^\alpha_r$.
    \begin{align*}
        v(t+\ell, X_{t+\ell}) &= v(t, x) + \int_t^{t+\ell} \bigg( \partial_t v(r, X_r) + b(r, X_r, \alpha_r) \cdot \partial_x v(r, X_r) + \frac{1}{2}\Tr\big[\sigma(r,  X_r)\sigma(r, X_r)^\top \partial_{xx}^2 v(r, X_r) \big]\bigg) \diff r \\
        &\quad + \int_t^{t+\ell} \partial_x v(r, X_r) \cdot \sigma(r, X_r) \diff W^\alpha_r.
    \end{align*}

    Taking expectations under $\P^{t, x, \alpha}$ eliminates the stochastic integral. Indeed, we identify the integrand $\partial_x v(r, X_r) \cdot \sigma(r, X_r)$ as the process $Z_r$ from the BSDE governing $v$. 
    By \Cref{def:bsde_solution}, we have $Z \in \mathbb{H}^2(\R^d,\F,\P^{t, x, \alpha})$. Consequently, the stochastic integral is a true martingale with zero expectation. Substituting this into the expression for $J(t, x, \hat{\alpha})$, we obtain
    \begin{align*}
        J(t, x, \hat{\alpha}) - v(t, x) &= \E^{\P^{\smalltext{t}\smalltext{,} \smalltext{x}\smalltext{,} \smalltext{\alpha}}}\bigg[ \int_t^{t+\ell} \bigg( f(r,  x, X_r, \alpha_r) + \partial_t v(r, X_r) + b(r, X_r, \alpha_r) \cdot \partial_x v(r, X_r)\\
        &\quad  + \frac{1}{2}\Tr\big[\sigma(r, X_r)\sigma(r, X_r)^\top \partial_{xx}^2 v(r, X_r)\big] \bigg) \diff r \bigg] + I.
    \end{align*}

    To analyse $I$, we apply \Cref{Ito Wentzell Formula} to the map $y \longmapsto \mathcal{Y}_{t+\ell}^{y, \alpha^\star}$ along the process $X_r$ for $r \in [t, t+\ell]$. This yields

\begin{equation*}
        \mathcal{Y}_{t+\ell}^{\smallertext{X}_{\smalltext{t}\smalltext{+}\smalltext{\ell}}, \alpha^\smalltext{\star}} - \mathcal{Y}_{t+\ell}^{x, \alpha^\smalltext{\star}} 
        = \int_t^{t+\ell} \partial \mathcal{Y}_{t+\ell}^{\smallertext{X}_\smalltext{r}, \alpha^\smalltext{\star}} \cdot \diff X_r + \frac{1}{2} \int_t^{t+\ell} \Tr\big[ \sigma(r, X_r)\sigma(r, X_r)^\top \partial \partial \mathcal{Y}_{t+\ell}^{\smallertext{X}_\smalltext{r}, \alpha^\smalltext{\star}} \big] \diff r.
    \end{equation*}
    Substituting the dynamics $\diff X_r = b(r, X_r, \alpha_r) \diff r + \sigma(r, X_r) \diff W^\alpha_r$, we isolate the stochastic integral term:
    \[
        \int_t^{t+\ell} \partial \mathcal{Y}_{t+\ell}^{\smallertext{X}_\smalltext{r}, \alpha^\smalltext{\star}} \cdot \sigma(r, X_r) \diff W^\alpha_r.
    \]
    Taking expectations under $\P^{t, x, \alpha}$, this term vanishes.
    Indeed, for any fixed parameter $y$, the process $\partial \mathcal{Y}^y$ solves a linear BSDE whose driver $\nabla_y f$ and terminal condition $\nabla_y \xi$ have polynomial growth in $y$ and $x$ (\Cref{Assumptions}-$(iii)$).
    Standard BSDE estimates (\emph{e.g.}, \cite[Proposition 2.1]{el1997backward}) imply that the solution $\partial \mathcal{Y}^y$ inherits this polynomial growth.
    Consequently, when evaluated at $y=X_r$, the integrand $\partial \mathcal{Y} \cdot \sigma$ has polynomial growth in $X_r$.
    Given the finite moments of $X$ (\Cref{Assumptions}-$(iv)$), the integrand belongs to $\mathbb{H}^2(\R^d,\F,\P^{t, x, \alpha})$, making the integral a true martingale with zero mean.

    \medskip
    
    We are thus left with the drift terms
   \begin{align*}
I &= - \E^{\P^{\smalltext{t}, \smalltext{x}, \smalltext{\alpha}}} \bigg[ \int_t^{t+\ell} \bigg( b(r, X_r, \alpha_r) \cdot \sigma(r, X_r)^\top \partial_y \mathcal{Y}_{t+\ell}^{\smallertext{X}_\smalltext{r}, \alpha^\smalltext{\star}} + \frac{1}{2}\Tr\big[ \sigma(r, X_r)\sigma(r, X_r)^\top \partial_{yy}^2 \mathcal{Y}_{t+\ell}^{\smallertext{X}_\smalltext{r}, \alpha^\smalltext{\star}} \big] \bigg) \diff r \bigg].
    \end{align*}

    We now combine the results. We add and subtract two specific terms inside the integral
    \begin{enumerate}
        \item the running cost evaluated at the current state preference $f(r,  X_r, X_r, \alpha_r);$
        \item the generator adjustment term evaluated at the current time $\Lc^{\alpha_\smalltext{r}}_{r, (y)} \mathcal{Y}^{\smallertext{X}_\smalltext{r}, \alpha^\smalltext{\star}}_r$.
    \end{enumerate}
    Grouping these terms appropriately, we obtain the following decomposition
    \begin{align*}
        J(t, x, \hat{\alpha}) - v(t, x) &= \E^{\P^{\smalltext{t}\smalltext{,} \smalltext{x}\smalltext{,} \smalltext{\alpha}}} \Bigg[ \int_t^{t+\ell} \underbrace{ \bigg( \partial_t v(r, X_r) + \Lc_r^{\alpha_\smalltext{r}} v(r, X_r) + f(r,  X_r, X_r, \alpha_r) - \Lc^{\alpha_\smalltext{r}}_{r,(y)} \mathcal{Y}^{\smallertext{X}_\smalltext{r}, \alpha^\smalltext{\star}}_r \bigg) }_{\text{Term A: Hamiltonian gap}} \diff r \Bigg] \\
        &\quad + \E^{\P^{\smalltext{t}\smalltext{,} \smalltext{x}\smalltext{,} \smalltext{\alpha}}} \Bigg[ \int_t^{t+\ell} \underbrace{ \bigg( f(r,  x, X_r, \alpha_r) - f(r,  X_r, X_r, \alpha_r) \bigg) }_{\text{Term B: preference approximation}} \diff r \Bigg] \\
        &\quad + \E^{\P^{\smalltext{t}\smalltext{,} \smalltext{x}\smalltext{,} \smalltext{\alpha}}} \Bigg[ \int_t^{t+\ell} \underbrace{ \Big( \Lc^{\alpha_\smalltext{r}}_{r,(y)} \mathcal{Y}^{\smallertext{X}_\smalltext{r}, \alpha^\smalltext{\star}}_r - \Lc^{\alpha_\smalltext{r}}_{r,(y)} \mathcal{Y}_{t+\ell}^{\smallertext{X}_\smalltext{r}, \alpha^\smalltext{\star}} \Big) }_{\text{Term C: continuity error}} \diff r \Bigg].
    \end{align*}

    \textbf{Analysis of term A.} 
    This term measures the local sub-optimality of the control $\alpha$. Let $\mathcal{I}_r$ denote the integrand
    \[
        \mathcal{I}_r \coloneqq \partial_t v(r, X_r) + \mathcal{L}_r^{\alpha_\smalltext{r}} v(r, X_r) + f(r,  X_r, X_r, \alpha_r) - \mathcal{L}_{r,(y)}^{\alpha_\smalltext{r}} \mathcal{Y}^{\smallertext{X}_\smalltext{r}, \alpha^\smalltext{\star}}_r.
    \]

    We identify $\partial_t v$ using the first equation of the BSDE system \eqref{BSDE Weak Formulation}. Under the reference measure $\P$, the drift of the process $Y_r = v(r, X_r)$ is given by the driver $-H$. Comparing this with the drift obtained from It\^o's formula applied to $v(r, X_r)$, we establish the identity
    \begin{equation}\label{eq:partial_t_v_id}
        \partial_t v(r, X_r) = -H\big(r, X_r, Z_r, \partial Y_r^{\smallertext{X}_\smalltext{r}}, \partial\partial Y_r^{\smallertext{X}_\smalltext{r}}, \partial Z_r^{\smallertext{X}_\smalltext{r}}\big) - \frac{1}{2}\Tr\big[\sigma(r, X_r)\sigma(r, X_r)^\top \partial_{xx}^2 v(r, X_r)\big],\; \mathrm{d}r\otimes\P\text{\rm--a.e.}
    \end{equation}
    Substituting this expression into $\mathcal{I}_r$, and expanding the operators $\Lc^{\alpha_\smalltext{r}}$ and $\Lc^{\alpha_\smalltext{r}}_{(y)}$, we observe two key cancellations
    \begin{enumerate}
        \item[$(i)$] the diffusion term $\frac{1}{2}\Tr[\sigma\sigma^\top \partial_{xx}^2 v]$ from the generator $\Lc^{\alpha_\smalltext{r}} v$ cancels with the corresponding term in \eqref{eq:partial_t_v_id}$;$
        \item[$(ii)$] the inconsistency terms involving $\partial_{yy}^2 \mathcal{J}$ and $\partial_{xy}^2 \mathcal{J}$ appearing in $\Lc^{\alpha_\smalltext{r}}_{(y)} \mathcal{Y}$ depend only on the volatility $\sigma$ (which is control-independent) and cancel exactly with the inconsistency adjustment terms included in the definition of the extended Hamiltonian $H$.
    \end{enumerate}
    
    Consequently, the integrand reduces to the difference between the Hamiltonian objective evaluated at the arbitrary control $\alpha_r$ and its maximum value
    \[
        \mathcal{I}_r = \big( f(r,  X_r, X_r, \alpha_r) + b(r, X_r, \alpha_r) \cdot \big( Z_r - \sigma(r, X_r)^\top \partial Y^{\smallertext{X}_\smalltext{r}}_r \big) \big) - \sup_{a \in A} \big\{ f(r,  X_r, X_r, a) + b(r, X_r, a) \cdot \big( Z_r - \sigma(r, X_r)^\top \partial Y^{\smallertext{X}_\smalltext{r}}_r \big) \big\}.
    \]
    Thus, $\mathcal{I}_r \le 0$ almost surely, and we readily obtain
    \[
        \int_t^{t+\ell} \mathcal{I}_r \diff r \le 0,\; \P^{t, x, \alpha}\text{\rm--a.s.}
    \]

    \textbf{Analysis of term B.} We use the Lipschitz-continuity of $f$ with respect to its first parameter (\Cref{Assumptions}). Let $L$ be the Lipschitz-continuity constant. Then
    \[
        \|\text{Term B}\| = \big\|f(r,  x, X_r, \alpha_r) - f(r,  X_r, X_r, \alpha_r)\big\| \le L \|x - X_r\|.
    \]
    Taking the expectation under $\P^{t, x, \alpha}$
    \begin{align*}
        \bigg\| \E^{\P^{\smalltext{t}, \smalltext{x}, \smalltext{\alpha}}} \bigg[\int_t^{t+\ell} \text{Term B} \diff r \bigg]\bigg\| &\le L \int_t^{t+\ell} \E^{\P^{\smalltext{t}\smalltext{,} \smalltext{x}\smalltext{,} \smalltext{\alpha}}}[\|X_r - x\|] \diff r.
    \end{align*}
    Using standard moment estimates for SDEs with linear growth coefficients (see \cite[Corollary 2.5.12]{karatzas1991brownian}), we have $\E^{\P^{\smalltext{t}\smalltext{,} \smalltext{x}\smalltext{,} \smalltext{\alpha}}}[\|X_r - x\|] \le C(1+\|x\|) \sqrt{r-t}$. Thus
    \[
        \int_t^{t+\ell} \sqrt{r-t} \diff r = \bigg[ \frac{2}{3}(r-t)^{3/2} \bigg]_t^{t+\ell} = \frac{2}{3} \ell^{3/2} = o(\ell).
    \]

\textbf{Analysis of term C.} This term arises from the time-continuity of the inconsistency adjustment. We analyse the integral of the difference
    \[
       \Delta_r \coloneqq \mathcal{L}_{r,(y)}^{\alpha_r} \mathcal{Y}^{\smallertext{X}_\smalltext{r}, \alpha^\smalltext{\star}}_r - \mathcal{L}_{r,(y)}^{\alpha_r} \mathcal{Y}_{t+\ell}^{\smallertext{X}_\smalltext{r}, \alpha^\smalltext{\star}}.
    \]
    Recall that the operator $\Lc^{\alpha}_{(y)}$ is linear in the derivatives $\partial_y \mathcal{Y}^{\cdot, \alpha^\smalltext{\star}}$ and $\partial_{yy}^2 \mathcal{Y}^{\cdot, \alpha^\smalltext{\star}}$, with coefficients $b$ and $\sigma$ that satisfy linear growth conditions.
    \medskip
    
    Since the solution to the auxiliary BSDE system belongs to the space $\mathbb{S}^2(\R^d,\F,\P)$, the mappings $r \longmapsto \partial_y \mathcal{Y}^{\cdot, \alpha^\star}_r$ and $r \longmapsto \partial_{yy}^2 \mathcal{Y}^{\cdot, \alpha^\star}_r$ are continuous with respect to time in the norm of $\L^2(\P^{t,x,\alpha})$. 
    Furthermore, the state process $X$ defines a mapping $r \longmapsto X_r$ which is continuous with respect to time in $\L^p(\P^{t,x,\alpha})$ for any $p \ge 1$.
    By H\"older's inequality, the composition appearing in $\Delta_r$ is continuous with respect to time in $\L^1(\P^{t, x, \alpha})$. Therefore, we readily obtain:
    \[
       \E^{\P^{\smalltext{t}\smalltext{,} \smalltext{x}\smalltext{,} \smalltext{\alpha}}} \bigg[ \int_t^{t+\ell} \Delta_r \diff r \bigg] = \int_t^{t+\ell} o(1) \diff r = o(\ell).
    \]

Combining the non-positivity of Term A with the $o(\ell)$ estimates for Terms B and C, we obtain
    \[
        J(t, x, \hat{\alpha}) - v(t, x) \le 0 + o(\ell) + o(\ell).
    \]
    This confirms that the equilibrium strategy $\alpha^\star$ provides a higher payoff than the perturbed strategy $\hat{\alpha}$ up to first order, thereby satisfying the definition of an equilibrium control.
\end{proof}

\begin{remark}
    This result motivates {\rm\Cref{Def:equilibrium}} in the following sense: one could argue that it would make sense to allow for improvements of order $o(\ell^k)$, since the use of $k=1$ in our definition could seem arbitrary at first. However, we see here that $k=1$ is exactly the power that we need to guarantee the result.
\end{remark}

\section{Well-posedness of the BSDE system}\label{sec:wellposedness_proof}

In this section, we provide the rigorous proof for the existence and uniqueness of the solution to the system \eqref{BSDE Weak Formulation}. We adopt a fixed-point approach on the full system of three equations. To handle the linear growth of the value function derivatives (typical in linear--quadratic problems), we work in weighted spaces that allow for polynomial growth in the parameter $y$. We also show that our work implies the existence of solutions in the sense of \Cref{def:bsde_solution}.

\medskip
To ease the notation, we will denote by $C$ an arbitrary constant that may change line by line. We first introduce and prove the following standard a priori estimate (similar to \citeauthor*{el1997backward} \cite[Proposition 2.1]{el1997backward}).

\begin{lemma}[A Priori Estimates and Contraction]\label{lemma:BSDE_estimate}
Let $(\delta Y, \delta Z)$ be the solution to the linearized BSDE with driver difference $\delta f$:
\begin{equation}
    -\mathrm{d}(\delta Y_t) = \delta f_t \mathrm{d}t - \delta Z_t \mathrm{d}W_t, \quad \delta Y_T = 0.
\end{equation}
For $\beta$ sufficiently large, the following estimate holds:
\begin{equation}
    \|\delta Y\|^2_{\mathbb{S}^\smalltext{2}_\smalltext{\beta}(\R,\F,\P)} + \|\delta Z\|^2_{\mathbb{H}^2_\beta(\R^d,\F,\P)} \le \frac{C}{\beta} \|\delta f\|^2_{\mathbb{H}^2_\beta(\R,\F,\P)}.
\end{equation}
\end{lemma}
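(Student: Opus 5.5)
The plan is the standard $\beta$-weighted It\^o argument of El Karoui--Peng--Quenez, applied to $\mathrm{e}^{\beta t}|\delta Y_t|^2$.

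\textbf{Step 1: It\^o's formula and the basic energy identity.} Apply It\^o's formula to $\mathrm{e}^{\beta s}|\delta Y_s|^2$ on $[t,T]$. Since $\delta Y_T=0$, this gives
\[
\mathrm{e}^{\beta t}|\delta Y_t|^2+\int_t^T \mathrm{e}^{\beta s}\big(\beta|\delta Y_s|^2+\|\delta Z_s\|^2\big)\mathrm{d}s
=\int_t^T 2\mathrm{e}^{\beta s}\delta Y_s\,\delta f_s\,\mathrm{d}s-\int_t^T 2\mathrm{e}^{\beta s}\delta Y_s\,\delta Z_s\cdot\mathrm{d}W_s .
\]

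\textbf{Step 2: absorbing the cross term.} Bound the cross term by Young's inequality,
\[
2\delta Y\,\delta f\le \beta|\delta Y|^2+\beta^{-1}|\delta f|^2 .
\]
The $|\delta Y|^2$ part is then absorbed by the $\beta|\delta Y|^2$ term on the left. The stochastic integral is a true martingale, which we justify by localisation: $\delta Y\in\mathbb{S}^2_\beta(\R,\F,\P)$ and $\delta Z\in\mathbb{H}^2_\beta(\R^d,\F,\P)$, so by Burkholder--Davis--Gundy it is even uniformly integrable. Taking expectations at $t=0$ yields
\[
\|\delta Z\|^2_{\mathbb{H}^\smalltext{2}_\smalltext{\beta}(\R^\smalltext{d},\F,\P)}\le \beta^{-1}\|\delta f\|^2_{\mathbb{H}^\smalltext{2}_\smalltext{\beta}(\R,\F,\P)}.
\]
The same computation, keeping the $\beta|\delta Y|^2$ term with a factor $1/2$, also controls $\beta\,\E^\P\big[\int_0^T \mathrm{e}^{\beta s}|\delta Y_s|^2\mathrm{d}s\big]$.

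\textbf{Step 3: the supremum bound, which is the main obstacle.} The delicate point is obtaining the $\mathbb{S}^2_\beta$ bound with the factor $C/\beta$ rather than merely $C$. I would take $\sup_t$ before taking expectations in the identity of Step 1. This time I apply Young's inequality with weight $1$ in $\beta$ (keeping the term $\int \mathrm{e}^{\beta s}\beta|\delta Y|^2$ available to absorb the cross term). I then control the martingale term by Burkholder--Davis--Gundy:
\[
\E^\P\Big[\sup_t\Big|\int_t^T \mathrm{e}^{\beta s}\delta Y_s\,\delta Z_s\cdot\mathrm{d}W_s\Big|\Big]
\le C\,\E^\P\Big[\Big(\int_0^T \mathrm{e}^{2\beta s}|\delta Y_s|^2\|\delta Z_s\|^2\mathrm{d}s\Big)^{1/2}\Big]
\le \tfrac14\|\delta Y\|^2_{\mathbb{S}^\smalltext{2}_\smalltext{\beta}}+C\|\delta Z\|^2_{\mathbb{H}^\smalltext{2}_\smalltext{\beta}} .
\]
Absorbing the $\frac14\|\delta Y\|^2_{\mathbb{S}^2_\beta}$ term and inserting the $\mathbb{H}^2_\beta$ bound from Step 2 gives
\[
\|\delta Y\|^2_{\mathbb{S}^\smalltext{2}_\smalltext{\beta}}\le C\beta^{-1}\|\delta f\|^2_{\mathbb{H}^\smalltext{2}_\smalltext{\beta}}.
\]
If the $\beta^{-1}$ does not come out cleanly from the sup estimate, I would fall back on a pathwise bound. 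Writing $\delta Y_t=\E^\P\big[\int_t^T \delta f_s\,\mathrm{d}s\,\big|\,\Fc_t\big]$ and using Cauchy--Schwarz with weight $\mathrm{e}^{-\beta s}$ gives
\[
\mathrm{e}^{\beta t}|\delta Y_t|^2\le \beta^{-1}\,\E^\P\Big[\int_t^T \mathrm{e}^{\beta s}|\delta f_s|^2\mathrm{d}s\,\Big|\,\Fc_t\Big].
\]
Doob's maximal inequality for this martingale then yields the $\mathbb{S}^2_\beta$ bound with constant $4/\beta$. This fallback route is the one I actually expect to be cleanest.

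Summing the bounds from Steps 2 and 3 gives the claimed estimate with a universal $C$, for every $\beta>0$ (in particular for $\beta$ large). In the later contraction argument, this is what makes the Lipschitz drivers of \Cref{ass:poly_growth} yield a factor $C/\beta<1$.
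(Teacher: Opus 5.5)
Your main line (Steps 1--3) is correct and is essentially the paper's proof. It consists of It\^o's formula for $\mathrm{e}^{\beta t}|\delta Y_t|^2$, Young's inequality for the $\mathbb{H}^2_\beta$ bounds on $\delta Z$ (and on $\sqrt{\beta}\,\delta Y$), then the supremum via Burkholder--Davis--Gundy, absorption of $\frac14\|\delta Y\|^2_{\mathbb{S}^2_\beta}$ (legitimate because $\delta Y\in\mathbb{S}^2_\beta$), and reinsertion of the $\delta Z$ bound. Your pathwise cancellation of the $\beta|\delta Y|^2$ term in Step 3 is only a slightly tidier treatment of the drift part than the paper's.

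The fallback you say you prefer does not work as written. After the conditional Cauchy--Schwarz step you only have $\mathrm{e}^{\beta t}|\delta Y_t|^2\le\beta^{-1}N_t$, where $N_t\coloneqq\E^\P[\int_0^T\mathrm{e}^{\beta s}|\delta f_s|^2\mathrm{d}s\,|\,\Fc_t]$. This is merely an $L^1$ martingale, and Doob's maximal inequality has no $L^1$ version: $\E^\P[\sup_t N_t]$ need not be finite for integrable $N_T$, and one only has the $L\log L$ bound. The constant $4$ you quote is the $L^2$ constant.

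The repair is to apply Cauchy--Schwarz pathwise, before conditioning and without squaring. Since $\mathrm{e}^{\beta t/2}=\mathrm{e}^{-\beta(s-t)/2}\mathrm{e}^{\beta s/2}$ for $s\ge t$,
\[
\mathrm{e}^{\beta t/2}|\delta Y_t|\le\E^\P\bigg[\int_t^T\mathrm{e}^{-\beta(s-t)/2}\,\mathrm{e}^{\beta s/2}|\delta f_s|\,\mathrm{d}s\,\bigg|\,\Fc_t\bigg]\le\beta^{-1/2}\,\E^\P\big[\Xi\,\big|\,\Fc_t\big],\qquad \Xi\coloneqq\bigg(\int_0^T\mathrm{e}^{\beta s}|\delta f_s|^2\,\mathrm{d}s\bigg)^{1/2}.
\]
Doob's $L^2$ inequality for the square-integrable martingale $\E^\P[\Xi\,|\,\Fc_t]$ then gives $\|\delta Y\|^2_{\mathbb{S}^2_\beta}\le 4\beta^{-1}\|\delta f\|^2_{\mathbb{H}^2_\beta}$ for every $\beta>0$. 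Combined with your Step 2 for $\delta Z$, this is a valid alternative to the paper's supremum estimate. It avoids Burkholder--Davis--Gundy and gives explicit constants.
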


\begin{proof}

We start by applying It\^o's formula to the process $e^{\beta t}|\delta Y_t|^2$:
\begin{align*}
    \mathrm{d}(e^{\beta t}|\delta Y_t|^2) &= \beta e^{\beta t}|\delta Y_t|^2 \mathrm{d}t + e^{\beta t} \left( 2 \delta Y_t \cdot \mathrm{d}(\delta Y_t) + |\delta Z_t|^2 \mathrm{d}t \right) \\
    &= e^{\beta t} \left( \beta |\delta Y_t|^2 + |\delta Z_t|^2 - 2 \delta Y_t \cdot \delta f_t \right) \mathrm{d}t + 2e^{\beta t} \delta Y_t \cdot \delta Z_t \mathrm{d}W_t.
\end{align*}
Integrating from $0$ to $T$, taking expectations, and using the fact that $\delta Y_T = 0$ and the stochastic integral is a martingale, we get:
\begin{equation*}
    \E \bigg[ \int_0^T e^{\beta s} \left( \beta |\delta Y_s|^2 + |\delta Z_s|^2 \right) \mathrm{d}s \bigg] \leq 2 \E \bigg[ \int_0^T e^{\beta s} \delta Y_s \cdot \delta f_s \mathrm{d}s \bigg].
\end{equation*}
We now use Young's inequality, $2ab \le \frac{\beta}{2}a^2 + \frac{2}{\beta}b^2$ on the right-hand side:
\begin{equation*}
    2 \delta Y_s \cdot \delta f_s \le \frac{\beta}{2} |\delta Y_s|^2 + \frac{2}{\beta} |\delta f_s|^2.
\end{equation*}
Substituting this back into the integral equality:
\begin{equation*}
    \E \bigg[ \int_0^T e^{\beta s} \left( \beta |\delta Y_s|^2 + |\delta Z_s|^2 \right) \mathrm{d}s \bigg] \le \E \bigg[ \int_0^T e^{\beta s} \left( \frac{\beta}{2} |\delta Y_s|^2 + \frac{2}{\beta} |\delta f_s|^2 \right) \mathrm{d}s \bigg].
\end{equation*}
Subtracting the term $\frac{\beta}{2} \|\delta Y\|^2_{\mathbb{H}^2_\beta}$ from both sides yields:
\begin{equation*} \label{eq:integral_bound}
    \frac{\beta}{2} \|\delta Y\|^2_{\mathbb{H}^2_\beta} + \|\delta Z\|^2_{\mathbb{H}^2_\beta} \le \frac{2}{\beta} \|\delta f\|^2_{\mathbb{H}^2_\beta}.
\end{equation*}
This inequality immediately gives two bounds:
\begin{enumerate}
    \item $\|\delta Y\|^2_{\mathbb{H}^2_\beta} \le \frac{4}{\beta^2} \|\delta f\|^2_{\mathbb{H}^2_\beta}$.
    \item $\|\delta Z\|^2_{\mathbb{H}^2_\beta} \le \frac{2}{\beta} \|\delta f\|^2_{\mathbb{H}^2_\beta}$.
\end{enumerate}

Note that the bound for $\|\delta Z\|^2_{\mathbb{H}^2_\beta}$ is the one we need, and that we residually obtained a strong bound for $\|\delta Y\|^2_{\mathbb{H}^2_\beta}$ that will also be useful later. The latter will help us prove our desired bound for $\|\delta Y\|^2_{\mathbb{S}^2_\beta}$. 

To bound the supremum, we return to the integral form of the process $e^{\beta t}|\delta Y_t|^2$. By integrating the It\^o differential from $t$ to $T$ and using the terminal condition $\delta Y_T = 0$, we have:
\begin{equation*}
    e^{\beta t}|\delta Y_t|^2 + \int_t^T e^{\beta s} (\beta |\delta Y_s|^2 + |\delta Z_s|^2) \mathrm{d}s = \int_t^T 2 e^{\beta s} \delta Y_s \cdot \delta f_s \mathrm{d}s - \int_t^T 2 e^{\beta s} \delta Y_s \cdot \delta Z_s \mathrm{d}W_s.
\end{equation*}
The integral term on the left-hand side is non-negative. Thus:
\begin{equation*}
    e^{\beta t}|\delta Y_t|^2 \le \int_t^T 2 e^{\beta s} |\delta Y_s| |\delta f_s| \mathrm{d}s + \bigg| \int_t^T 2 e^{\beta s} \delta Y_s \cdot \delta Z_s \mathrm{d}W_s \bigg|.
\end{equation*}
We now take the supremum over $t \in [0,T]$ on both sides, followed by the expectation. For the first term on the right (the drift), we effectively bound it by the integral over $[0,T]$:
\begin{align*}
    \E \bigg[ \sup_{t \in [0,T]} e^{\beta t}|\delta Y_t|^2 \bigg] \le \underbrace{\E \int_0^T e^{\beta s} |2 \delta Y_s \cdot \delta f_s| \mathrm{d}s}_{\text{Drift part}} + \underbrace{2 \E \bigg[ \sup_{t \in [0,T]} \bigg| \int_0^t e^{\beta s} \delta Y_s \cdot \delta Z_s \mathrm{d}W_s \bigg| \bigg]}_{\text{Martingale } M_t}.
\end{align*}

Using Young's inequality for the drift:

\begin{align*}
    \E \left[ \sup_{t \in [0,T]} e^{\beta t}|\delta Y_t|^2 \right] &\le \E \int_0^T e^{\beta s} \left( \beta |\delta Y_s|^2 + \frac{1}{\beta} |\delta f_s|^2 \right) \mathrm{d}s + 2 \E \left[ \sup_{t \in [0,T]} |M_t| \right] \\
    &\le \frac{C}{\beta} \|\delta f\|^2_{\mathbb{H}^2_\beta} + 2 \E \left[ \sup_{t \in [0,T]} |M_t| \right].
\end{align*}

 Here, we used that the term $\beta \|\delta Y\|^2$ is bounded by $\frac{C}{\beta} \|\delta f\|^2$. To bound the martingale term, we apply the 
Burkholder-Davis-Gundy inequality in its $L^1$ form:

\begin{align*}
    \E \left[ \sup_{t \in [0,T]} \left| \int_0^t e^{\beta s} \delta Y_s \delta Z_s \mathrm{d}W_s \right| \right] &\le 3 \E \left[ \left( \int_0^T e^{2\beta s} |\delta Y_s|^2 |\delta Z_s|^2 \mathrm{d}s \right)^{1/2} \right] \\
    &\le 3 \E \left[ \left(\sup_{r \in [0,T]} e^{\beta r/2} |\delta Y_r|\right) \left( \int_0^T e^{\beta s} |\delta Z_s|^2 \mathrm{d}s \right)^{1/2} \right].
\end{align*}
Using again Young's inequality $ab \le \frac{1}{4}a^2 + C b^2$, and putting everything together:
\begin{equation*}
     \E \left[ \sup_{t \in [0,T]} e^{\beta t}|\delta Y_t|^2 \right] \le \frac{C}{\beta} \|\delta f\|^2_{\mathbb{H}^2_\beta} +  \frac{1}{2} \E \left[ \sup_{r \in [0,T]} e^{\beta r} |\delta Y_r|^2 \right] + C \E \left[ \int_0^T e^{\beta s} |\delta Z_s|^2 \mathrm{d}s \right].
\end{equation*}
The middle term in the RHS can be absorbed into the left-hand side of our supremum estimate. The last term is proportional to $\|\delta Z\|^2_{\mathbb{H}^2_\beta}$, which we already bounded by $\frac{2}{\beta} \|\delta f\|^2$.

\medskip

Concluding, we get:
\begin{equation*}
    \|\delta Y\|^2_{\mathbb{S}^2_\beta} + \|\delta Z\|^2_{\mathbb{H}^2_\beta} \le \frac{C}{\beta} \|\delta f\|^2_{\mathbb{H}^2_\beta},
\end{equation*}

which is what we wanted to prove.

\end{proof}

We also present this immediate corollary, which will prove useful when proving that the central map in the proof of \Cref{thm:wellposedness} is a contraction.

\medskip

\begin{corollary}\label{lemma:elkaroui_est}
    Consider two real \emph{BSDEs} $-\d Y_t = f_t \d t - Z_t \cdot \d W_t$ and $-\d Y^\prime_t = f^\prime_t \d t - Z^\prime_t \cdot \d W_t$, taking values in $\R$. Assume that $Y_T = Y'_{T}$. Let $\delta Y \coloneqq Y - Y^\prime$, $\delta Z \coloneqq Z - Z^\prime$, and $\delta f_t \coloneqq f_t - f^\prime_t$.
    \medskip
    Suppose that there exists a constant $K > 0$ and a non-negative process $\phi \in \mathbb{H}^2_\beta(\R,\F,\P)$ such that
    \[ 
    |\delta f_t| \le K\big(|\delta Y_t| + \|\delta Z_t\| + \phi_t\big), \; \d t \otimes \d \P\text{\rm--a.e.} 
    \]
    Then, for $\beta$ large enough, there exists a constant $C$ such that
    \[ 
    \|\delta Y\|^2_{\mathbb{S}^\smalltext{2}_\smalltext{\beta}(\R,\F,\P)} + \|\delta Z\|^2_{\mathbb{H}^\smalltext{2}_\smalltext{\beta}(\R^\smalltext{d},\F,\P)} \le \frac{C}{\beta} \|\phi\|^2_{\mathbb{H}^\smalltext{2}_\smalltext{\beta}(\R,\F,\P)}.
    \]

\end{corollary}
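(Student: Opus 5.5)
The plan is to reduce the corollary to \Cref{lemma:BSDE_estimate}, viewing $(\delta Y,\delta Z)$ as the solution of the linearised BSDE $-\d(\delta Y_t)=\delta f_t\,\d t-\delta Z_t\cdot\d W_t$ with $\delta Y_T=0$. This holds by subtracting the two equations and using $Y_T=Y^\prime_T$. The catch is that $\delta f$ is not an exogenous input: it is controlled by $\delta Y$ and $\delta Z$ themselves. So we cannot plug it in directly and must instead absorb those terms for $\beta$ large.

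Concretely, I would first check that $\delta f\in\mathbb{H}^2_\beta(\R,\F,\P)$. This follows from the assumed bound, since $\delta Y\in\mathbb{S}^2_\beta\subset\mathbb{H}^2_\beta$, $\delta Z\in\mathbb{H}^2_\beta$ and $\phi\in\mathbb{H}^2_\beta$; here I assume, as is implicit, that both pairs lie in the standard solution spaces. Then, rather than citing \Cref{lemma:BSDE_estimate} as a black box, I would rerun its first step.

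Apply It\^o's formula to $\mathrm{e}^{\beta t}|\delta Y_t|^2$ and take expectations to get
\[
\E\bigg[\int_0^T \mathrm{e}^{\beta s}\big(\beta|\delta Y_s|^2+\|\delta Z_s\|^2\big)\d s\bigg]\le 2K\,\E\bigg[\int_0^T \mathrm{e}^{\beta s}|\delta Y_s|\big(|\delta Y_s|+\|\delta Z_s\|+\phi_s\big)\d s\bigg].
\]
Then bound the right-hand side with Young's inequality, using $2K|y|\|z\|\le 2K^2|y|^2+\tfrac12\|z\|^2$ and $2K|y|\phi\le \tfrac{\beta}{4}|y|^2+\tfrac{4K^2}{\beta}\phi^2$. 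For $\beta\ge 4(2K+2K^2)$ this lets me absorb every $\delta Y$ term and half of the $\delta Z$ term into the left-hand side. The result is $\tfrac{\beta}{2}\|\delta Y\|^2_{\mathbb{H}^2_\beta}+\tfrac12\|\delta Z\|^2_{\mathbb{H}^2_\beta}\le \tfrac{4K^2}{\beta}\|\phi\|^2_{\mathbb{H}^2_\beta}$.

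From this I also get $\|\delta f\|^2_{\mathbb{H}^2_\beta}\le 3K^2\big(\|\delta Y\|^2_{\mathbb{H}^2_\beta}+\|\delta Z\|^2_{\mathbb{H}^2_\beta}+\|\phi\|^2_{\mathbb{H}^2_\beta}\big)\le C\|\phi\|^2_{\mathbb{H}^2_\beta}$, with $C$ independent of $\beta\ge 1$. For the $\mathbb{S}^2_\beta$ part I would repeat the supremum argument of \Cref{lemma:BSDE_estimate}. Bound $\sup_t \mathrm{e}^{\beta t}|\delta Y_t|^2$ by $\E\int_0^T 2\mathrm{e}^{\beta s}|\delta Y_s||\delta f_s|\d s$ plus a martingale term. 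Handle the martingale by Burkholder--Davis--Gundy and Young, absorbing $\tfrac12\|\delta Y\|^2_{\mathbb{S}^2_\beta}$ into the left-hand side. What remains is bounded by $\beta\|\delta Y\|^2_{\mathbb{H}^2_\beta}+\beta^{-1}\|\delta f\|^2_{\mathbb{H}^2_\beta}+C\|\delta Z\|^2_{\mathbb{H}^2_\beta}$, and each of these is $\le \tfrac{C}{\beta}\|\phi\|^2_{\mathbb{H}^2_\beta}$ by the previous step.

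The main obstacle is bookkeeping the dependence on $\beta$. The absorption of the Lipschitz terms must not cost a factor that destroys the $1/\beta$ gain, and in particular the cross term $|\delta Y|\|\delta Z\|$ must go into the $\|\delta Z\|^2$ side, not be weighted by $\beta$. Choosing the Young weights as above keeps every constant depending only on $K$, which yields the stated $C/\beta$ bound.
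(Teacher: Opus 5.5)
Your proof is correct: the Young weights and the threshold $\beta\ge 8K(1+K)$ do give $\tfrac{\beta}{2}\|\delta Y\|^2_{\mathbb{H}^2_\beta}+\tfrac12\|\delta Z\|^2_{\mathbb{H}^2_\beta}\le \tfrac{4K^2}{\beta}\|\phi\|^2_{\mathbb{H}^2_\beta}$, and the supremum step then goes through as you describe. Your route is, however, more laborious than the paper's.

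\textbf{How the paper argues.} The paper does what you suggest cannot be done directly: it applies \Cref{lemma:BSDE_estimate} as a black box to $(\delta Y,\delta Z)$ with the endogenous driver $\delta f$. This is legitimate. The lemma only uses that $(\delta Y,\delta Z)$ solves $-\d(\delta Y_t)=\delta f_t\,\d t-\delta Z_t\cdot\d W_t$ with $\delta Y_T=0$ for some process $\delta f\in\mathbb{H}^2_\beta$. It is irrelevant that $\delta f$ depends on the solution.

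The Lipschitz bound then gives
$\|\delta f\|^2_{\mathbb{H}^2_\beta}\le 3K^2\big(T\|\delta Y\|^2_{\mathbb{S}^2_\beta}+\|\delta Z\|^2_{\mathbb{H}^2_\beta}+\|\phi\|^2_{\mathbb{H}^2_\beta}\big)$.
The paper writes $CK$ where $3CK^2$ is meant, which is harmless. Since the lemma's constant $C$ does not depend on $\beta$, the $\delta Y$ and $\delta Z$ terms on the right carry a factor $3CK^2\max(1,T)/\beta$. They are absorbed into the left-hand side as soon as $\beta$ exceeds twice that numerator. So the `loss of the $1/\beta$ gain' you worry about never arises, because the $1/\beta$ already multiplies everything.

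\textbf{What each route buys.} Inserting the Lipschitz bound into the It\^o identity before applying Young costs you a second pass through the Burkholder--Davis--Gundy supremum estimate. In return you get explicit constants and the sharper intermediate bound $\|\delta Y\|^2_{\mathbb{H}^2_\beta}\le 8K^2\beta^{-2}\|\phi\|^2_{\mathbb{H}^2_\beta}$. You also get a transparent check of the $\beta$-independence that the paper's two-line argument uses only implicitly.

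Both routes need a priori finiteness of $\|\delta Y\|_{\mathbb{S}^2_\beta}$ and $\|\delta Z\|_{\mathbb{H}^2_\beta}$ to perform the absorption. You flag this explicitly; the paper leaves it tacit.
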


\begin{proof}

    From Lemma \ref{lemma:BSDE_estimate} we have that there exists $C$ such that: \begin{align*}
        \|\delta Y\|^2_{\mathbb{S}^\smalltext{2}_\smalltext{\beta}(\R,\F,\P)} + \|\delta Z\|^2_{\mathbb{H}^\smalltext{2}_\smalltext{\beta}(\R^\smalltext{d},\F,\P)} &\le \frac{CK}{\beta} \bigg(T\|\delta Y\|^2_{\mathbb{S}^\smalltext{2}_\smalltext{\beta}(\R,\F,\P)} + \|\delta Z\|^2_{\mathbb{H}^\smalltext{2}_\smalltext{\beta}(\R^\smalltext{d},\F,\P)}+\|\phi\|^2_{\mathbb{H}^\smalltext{2}_\smalltext{\beta}(\R,\F,\P)}\Bigg)\\
        &\le \frac{CK\max(1,T)}{\beta} \bigg(\|\delta Y\|^2_{\mathbb{S}^\smalltext{2}_\smalltext{\beta}(\R,\F,\P)} + \|\delta Z\|^2_{\mathbb{H}^\smalltext{2}_\smalltext{\beta}(\R^\smalltext{d},\F,\P)}+\|\phi\|^2_{\mathbb{H}^\smalltext{2}_\smalltext{\beta}(\R,\F,\P)}\Bigg),
    \end{align*} 
    
    where we implicitly used: 
    \begin{equation*}
    \|\delta Y\|^2_{H^2_\beta} 
    = \mathbb{E}\!\int_0^T e^{\beta t}|\delta Y_t|^2\,dt 
    \leq T\,\|\delta Y\|^2_{S^2_\beta}.
\end{equation*}
Rearranging, we get:
\[ 
    \frac{\beta - CK\max(1,T)}{\beta}\Big(\|\delta Y\|^2_{\mathbb{S}^\smalltext{2}_\smalltext{\beta}(\R,\F,\P)} + \|\delta Z\|^2_{\mathbb{H}^\smalltext{2}_\smalltext{\beta}(\R^\smalltext{d},\F,\P)}\Big) \le \frac{C}{\beta} \|\phi\|^2_{\mathbb{H}^\smalltext{2}_\smalltext{\beta}(\R,\F,\P)}.
    \]

    Assuming, for instance, that $\beta > 2CK\max(1,T)$, we have that:

\[ 
    \Big(\|\delta Y\|^2_{\mathbb{S}^\smalltext{2}_\smalltext{\beta}(\R,\F,\P)} + \|\delta Z\|^2_{\mathbb{H}^\smalltext{2}_\smalltext{\beta}(\R^\smalltext{d},\F,\P)}\Big) \le \frac{2C}{\beta} \|\phi\|^2_{\mathbb{H}^\smalltext{2}_\smalltext{\beta}(\R,\F,\P)}.
    \]

\end{proof}

Now we are ready to prove our existence and uniqueness result for weighted spaces:

\begin{proof}[Proof of Theorem \ref{thm:wellposedness}]
    We proceed by constructing a contraction mapping on the Banach space $\mathcal{K}_\beta$. We define the map $\Phi: \mathcal{K}_\beta \longrightarrow \mathcal{K}_\beta$ as follows. Let $\mathbf{w} = (y, z, u, v, \mathfrak{u}, \mathfrak{v})$ be a fixed input tuple in $\mathcal{K}_\beta$. This input serves as the background processes frozen in the drivers. We define the output $\mathbf{W} = (Y, Z, U, V, \mathcal{U}, \mathcal{V}) = \Phi(\mathbf{w})$ as the unique solution to the following decoupled system of BSDEs
      \begin{align}
        \d Y_t &= -H\big(t, X_t, Z_t, u_t^{\smallertext{X}_\smalltext{t}}, \mathfrak{u}_t^{\smallertext{X}_\smalltext{t}}, v_t^{\smallertext{X}_\smalltext{t}}\big) \d t + Z_t\cdot \d W_t, \label{eq:map_Y_detailed} \\
        \d U^y_t &= -G_1\big(t, X_t, y, \underline{z_t, v_t^y, \mathfrak{u}_t^y}\big) \d t + V_t^y\cdot \d W_t, \label{eq:map_U_detailed} \\
        \d \mathcal{U}^y_t &= -G_2\big(t, X_t, y, \underline{z_t, v_t^y, \mathfrak{v}_t^y}\big) \d t + \mathcal{V}_t^y\cdot \d W_t.
\label{eq:map_calU_detailed}
    \end{align}

    In this system, the underlined terms indicate that the drivers depend on the input $\mathbf{w}$ rather than the solution variables being solved for. Specifically, the first equation for $Y$ depends on the diagonal terms of the input fields ($u, \mathfrak{u}, v$) evaluated at the random state $X_t$. The second and third equations are parameterised by $y \in \R^n$ and depend on the input fields evaluated at that specific parameter $y$. Since the system is decoupled and the drivers satisfy the Lipschitz and growth conditions from \Cref{ass:poly_growth}, standard BSDE theory guarantees that a unique solution $\mathbf{W}$ exists for any given input $\mathbf{w}$.

\medskip
    \textit{Step 1.} Let us prove that the map $\Phi$ is well-defined, that is, that for every input $\mathbf{w}$ in $\mathcal{K}_{\beta}^{n,d}$ and $\mathbf{W} = \Phi(\mathbf{w})$, we have that $\mathbf{W} \in \mathcal{K}_{\beta}^{n,d}$.
    We must thus verify that each component of the solution vector $\mathbf{W} = (Y, Z, U, V, \mathcal{U}, \mathcal{V})$ has a finite norm in its respective weighted space.

    \begin{enumerate}
    \item[$(i)$] \textit{The value processes $(Y, Z)$.}
The pair $(Y, Z)$ solves the \emph{BSDE}
\begin{equation*}
    Y_t = \xi(X_T, X_T) + \int_t^T H\big(r, X_r, Z_r, u_r^{\smallertext{X}_\smalltext{r}}, \mathfrak{u}_r^{\smallertext{X}_\smalltext{r}}, v_r^{\smallertext{X}_\smalltext{r}}\big) \d r - \int_t^T Z_r \d W_r, \quad t \in [0,T].
\end{equation*}
By the standard \emph{a priori} estimates for BSDEs with Lipschitz continuous drivers (see, \emph{e.g.}, \citeauthor*{el1997backward} \cite[Proposition 2.1]{el1997backward}, where we take $f^2 = 0$ and $\xi^2 = 0$), the squared norm of the solution in $\mathbb{S}^2_\beta(\R,\F,\P) \times \mathbb{H}^2_\beta(\R^d,\F,\P)$ is bounded by the square-integrability of the terminal condition and the driver evaluated at zero volatility. Specifically, there exists a constant $C > 0$ depending on $T$ and the Lipschitz constant of $H$ such that
\begin{align*}
    \|Y\|^2_{\mathbb{S}^\smalltext{2}_\smalltext{\beta}(\R,\F,\P)} + \|Z\|^2_{\mathbb{H}^\smalltext{2}_\smalltext{\beta}(\R^\smalltext{d},\F,\P)} &\le C \E^\P \bigg[ \mathrm{e}^{\beta T} |\xi(X_T, X_T)|^2 + \int_0^T \mathrm{e}^{\beta r} \big| H\big(r, X_r, 0, u_r^{\smallertext{X}_\smalltext{r}}, \mathfrak{u}_r^{\smallertext{X}_\smalltext{r}}, v_r^{\smallertext{X}_\smalltext{r}}\big) \big|^2 \d r \bigg].
\end{align*}
Using the Lipschitz continuity of $H$ with respect to the inputs $\Theta \coloneqq (z, u, \mathfrak{u}, v)$ and the growth assumption on the base term $H(\cdot, 0)$, we have
\begin{align*}
    \big| H\big(r, X_r, 0, u_r^{\smallertext{X}_\smalltext{r}}, \mathfrak{u}_r^{\smallertext{X}_\smalltext{r}}, v_r^{\smallertext{X}_\smalltext{r}}\big) \big|^2 &\le 2 \big| H(r, X_r, 0, 0, 0, 0) \big|^2 + 2K^2 \Big( \|u_r^{\smallertext{X}_\smalltext{r}}\|^2 + \|\mathfrak{u}_r^{\smallertext{X}_\smalltext{r}}\|^2 + \|v_r^{\smallertext{X}_\smalltext{r}}\|^2 \Big).
\end{align*}
The base term $\E^\P[ \int_0^T |H(r, X_r, 0, 0, 0,0 )|^2 \d r ]$ is finite by \Cref{ass:poly_growth}-$(iii)$. To bound the input terms, we rely on the embedding of the weighted spaces. Recall that for any input field, say $u \in \mathbb{S}^{2,2}_{\beta, \rho}$, we have the pointwise bound $\|u_r^y\|^2 \le \rho(y)^{-1} \|u\|_{\mathbb{S}^{\smalltext{2}\smalltext{,}\smalltext{2}}_{\smalltext{\beta}\smalltext{,}\smalltext{ \rho}}(\R,\F,\P)}^2$. Substituting the random parameter $y=X_r$
\begin{align*}
    \E^\P \bigg[ \int_0^T \mathrm{e}^{\beta r} \|u_r^{\smallertext{X}_\smalltext{r}}\|^2 \d r \bigg] &\le \E^\P \bigg[ \sup_{t \in [0, T]} \rho(X_t)^{-1} \bigg] \|u\|^2_{\mathbb{S}^{\smalltext{2}\smalltext{,}\smalltext{2}}_{\smalltext{\beta}\smalltext{,} \smalltext{\rho}}(\R,\F,\P)}.
\end{align*}
Since $\rho(x)^{-1}$ has polynomial growth and $X$ admits finite moments of all orders (\Cref{Assumptions}), the expectation $\E^\P [ \sup_{t \in [0, T]} \rho(X_t)^{-1} ]$ is finite. An identical argument applies to $\mathfrak{u}$ and $v$. Consequently, the right-hand side of the \emph{a priori} estimate is finite, implying $(Y, Z) \in \mathbb{S}^2_\beta(\R,\F,\P) \times \mathbb{H}^2_\beta(\R^d,\F,\P)$.
        
\medskip
\item[$(ii)$] \textit{The gradient processes $(U, V)$.} For any fixed parameter $y \in \R^n$, the pair $(U^y, V^y)$ solves a BSDE driven by $G_1$. Applying the standard \emph{a priori} estimate (see \cite{el1997backward}) yields:
\begin{align*}
    \|U^y\|^2_{\mathbb{S}^\smalltext{2}_\smalltext{\beta}(\R^n,\F,\P)} + \|V^y\|^2_{\mathbb{H}^\smalltext{2}_\smalltext{\beta}(\R^{n\times d},\F,\P)} &\le C \E^\P \bigg[ \mathrm{e}^{\beta T} |\nabla_y \xi(y, X_T)|^2 + \int_0^T \mathrm{e}^{\beta t} \big| G_1\big(t, X_t, y, z_t, v_t^y, \mathfrak{u}_t^y\big) \big|^2 \d t \bigg].
\end{align*}
By \Cref{ass:poly_growth}.$(ii)$, the driver $G_1$ is Lipschitz continuous with respect to the input variables $\Theta \coloneqq (z, v, \mathfrak{u})$. Therefore, we can bound the squared driver by the source term (at zero input) and the norms of the inputs:
\begin{align*}
    \big| G_1\big(t, X_t, y, z_t, v_t^y, \mathfrak{u}_t^y\big) \big|^2 \le C \Big( \big| G_1(t, X_t, y, 0) \big|^2 + \|z_t\|^2 + \|v_t^y\|^2 + \|\mathfrak{u}_t^y\|^2 \Big).
\end{align*}
To verify that these processes belong to $\mathcal{K}_\beta$, we multiply the entire estimate by the weight $\rho(y)$ and take the supremum over $y \in \R^n$. The inequality splits into two parts
\begin{enumerate}
    \item \textit{source terms:} by \Cref{ass:poly_growth}-$(iii)$, the source terms have finite weighted norms. Specifically
    \[
        \sup_{y \in \R^n} \rho(y) \E^\P \bigg[ |\nabla_y \xi(y, X_T)|^2 + \int_0^T \big| G_1(t, X_t, y, 0) \big|^2 \d t \bigg] < \infty;
    \]
    \item \textit{input terms:} the inputs belong to $\mathcal{K}_\beta$, so their weighted norms are finite
    \begin{align*}
        \sup_{y \in \R^n} \rho(y) \E^\P \bigg[ \int_0^T  \mathrm{e}^{\beta t}\Big( \|z_t\|^2 + \|v_t^y\|^2 + \|\mathfrak{u}_t^y\|^2 \Big) \d t \bigg] &\le C \Big( \|z\|^2_{\mathbb{H}^\smalltext{2}_\smalltext{\beta}(\R^{d},\F,\P)} + \|v\|^2_{\mathbb{H}^{\smalltext{2}\smalltext{,}\smalltext{2}}_{\smalltext{\beta}\smalltext{,} \smalltext{\rho}}(\R^{n},\F,\P)} + \|\mathfrak{u}\|^2_{\mathbb{S}^{\smalltext{2}\smalltext{,}\smalltext{2}}_{\smalltext{\beta}\smalltext{,} \smalltext{\rho}}(\R^{n\times n},\F,\P)} \Big) < \infty.
    \end{align*}
\end{enumerate}
Combining these bounds proves that the output pair $(U, V)$ has a finite weighted norm, \emph{i.e.}, $(U, V) \in \mathbb{S}^{2,2}_{\beta, \rho}(\R^n,\F,\P) \times \mathbb{H}^{2,2}_{\beta, \rho}(\R^{n \times d},\F,\P)$.

\medskip

        \item[$(iii)$] \textit{The Hessian processes $(\mathcal{U}, \mathcal{V})$.} The argument is strictly identical to the gradient case, as the driver $G_2$ satisfies the same condition.
    \end{enumerate}
    Thus, $\mathbf{W} \in \mathcal{K}_\beta(\F,\P)$.

    \medskip

\textit{Step 2.} Next, to prove that $\Phi$ is a contraction for a sufficiently large $\beta$, let us consider two arbitrary inputs $\mathbf{w}$ and $\mathbf{w}^{\prime}$ in $\mathcal{K}_\beta(\F,\P)$. Let $\mathbf{W} = \Phi(\mathbf{w})$ and $\mathbf{W}^{\prime} = \Phi(\mathbf{w}^{\prime})$ be their corresponding outputs. We denote the differences by $\delta \mathbf{w} = \mathbf{w} - \mathbf{w}^{\prime}$ and $\delta \mathbf{W} = \mathbf{W} - \mathbf{W}^{\prime}$. Our goal is to derive an estimate for $\|\delta \mathbf{W}\|_{\mathcal{K}_\smalltext{\beta}(\F,\P)}$ in terms of $\|\delta \mathbf{w}\|_{\mathcal{K}_\smalltext{\beta}(\F,\P)}$.
\medskip

\begin{enumerate}

\item[$(i)$]\textit{Estimation of the value process $(Y, Z)$.}
Consider the first equation for the value process $Y$, which is scalar-valued. The difference in the drivers, denoted by $\delta H_t$, is bounded pointwise by the differences in the solution components and the inputs. Let us define the scalar aggregate error process $\phi_t$ for the inputs as:
\[
    \phi_t \coloneqq \|\delta z_t\|+\|\delta u_t^{\smallertext{X}_\smalltext{t}}\| + \|\delta \mathfrak{u}_t^{\smallertext{X}_\smalltext{t}}\| + \|\delta v_t^{\smallertext{X}_\smalltext{t}}\|.
\]
By the Lipschitz continuity of the Hamiltonian $H$ (Assumption \ref{ass:poly_growth}), we have the pointwise bound $|\delta H_t| \le C(|\delta Y_t| + \|\delta Z_t\| + \phi_t)$. Applying \Cref{lemma:elkaroui_est} to the scalar BSDE for $\delta Y$, we obtain the following bound in the standard weighted spaces:
\[
    \|\delta Y\|^2_{\mathbb{S}^\smalltext{2}_\smalltext{\beta}(\R,\F,\P)} + \|\delta Z\|^2_{\mathbb{H}^\smalltext{2}_\smalltext{\beta}(\R^\smalltext{d},\F,\P)} \le \frac{C}{\beta} \|\phi\|^2_{\mathbb{H}^\smalltext{2}_\smalltext{\beta}(\R,\F,\P)} = \frac{C}{\beta} \E^\P \bigg[ \int_0^T \mathrm{e}^{\beta t} \phi_t^2 \d t \bigg].
\]
To relate this integral to the norms of the random fields in $\mathcal{K}_\beta$, we use the inequality $(a+b+c+d)^2 \le 4(a^2+b^2+c^2+d^2)$ to separate the components of $\phi_t$. We then bound the integral of each term using the moment constant $M_\smallertext{X} \coloneqq \E^\P[\sup_{t \in [0, T]} (1+\|X_t\|^2)^k]$. For instance, for the gradient term $\delta u$, we have:
\begin{align*}
    \E^\P \bigg[ \int_0^T \mathrm{e}^{\beta t} \|\delta u_t^{\smallertext{X}_\smalltext{t}}\|^2 \d t \bigg] &= \E^\P \bigg[ \frac{\rho(X_t)}{\rho(X_t)}\int_0^T \mathrm{e}^{\beta t}  \|\delta u_t^{\smallertext{X}_\smalltext{t}}\|^2 \d t \bigg] \\
    &\le C   \E^\P\bigg[\sup_{t \in [0, T]} (1+\|X_t\|^2)^k \bigg] \|\delta u\|^2_{\mathbb{S}^{\smalltext{2}\smalltext{,}\smalltext{2}}_{\smalltext{\beta}\smalltext{,} \smalltext{\rho}}(\R^\smalltext{n},\F,\P)} \le C M_\smallertext{X} \|\delta u\|^2_{\mathbb{S}^{\smalltext{2}\smalltext{,}\smalltext{2}}_{\smalltext{\beta}\smalltext{,} \smalltext{\rho}}(\R^\smalltext{n},\F,\P)},  
\end{align*}
Applying identical estimates for the Hessian term $\delta \mathfrak{u}$ (in the weighted $\mathbb{S}^{2,2}$ space) and the volatility gradient term $\delta v$ (in the weighted $\mathbb{H}^{2,2}$ space), and bounding the integrals, we arrive at the final estimate for the value process:
\begin{equation}\label{eq:est_Y_detailed}
    \|\delta Y\|^2_{\mathbb{S}^\smalltext{2}_\smalltext{\beta}(\R,\F,\P)} + \|\delta Z\|^2_{\mathbb{H}^\smalltext{2}_\smalltext{\beta}(\R^\smalltext{d},\F,\P)} \le \frac{3C M_\smallertext{X}}{\beta} \Big( \|\delta z\|^2_{\mathbb{H}^\smalltext{2}_\smalltext{\beta}(\R^{\smalltext{d}},\F,\P)}+\|\delta u\|^2_{\mathbb{S}^{\smalltext{2}\smalltext{,}\smalltext{2}}_{\smalltext{\beta}\smalltext{,} \smalltext{\rho}}(\R^\smalltext{n},\F,\P)} + \|\delta \mathfrak{u}\|^2_{\mathbb{S}^{\smalltext{2}\smalltext{,}\smalltext{2}}_{\smalltext{\beta}\smalltext{,} \smalltext{\rho}}(\R^{\smalltext{n}\smalltext{\times}\smalltext{n}},\F,\P)} + \|\delta v\|^2_{\mathbb{H}^{\smalltext{2}\smalltext{,}\smalltext{2}}_{\smalltext{\beta}\smalltext{,} \smalltext{\rho}}(\R^{\smalltext{n}\smalltext{\times}\smalltext{d}},\F,\P)} \Big).
\end{equation}

\medskip
    \item[$(ii)$]\textit{Estimation of the gradient process $(U, V)$.}
    Next, we consider the system for the gradient \eqref{eq:map_U_detailed}. For a fixed parameter $y \in \R^n$, the difference in the driver $G_1$ satisfies the Lipschitz condition stated in \Cref{ass:poly_growth}
    \[
        \|\Delta G_1(t, X_t, y)\| \le C \big( \|\delta V_t^y\| + \|\delta z_t\| + \|\delta v_t^y\| + \|\delta \mathfrak{u}_t^y\| \big).
    \]
    We apply the stability estimate from \Cref{lemma:elkaroui_est} for this fixed $y$ (or rather, a lifted version of it to $\R^n$), and follow the same reasoning as in Step 1. We obtain:
    \[
        \|\delta U^y\|^2_{\mathbb{S}^\smalltext{2}_\smalltext{\beta}(\R^\smalltext{n},\F,\P)} + \|\delta V^y\|^2_{\mathbb{H}^\smalltext{2}_\smalltext{\beta}(\R^{\smalltext{n} \smalltext{\times} \smalltext{d}},\F,\P)} \le \frac{C}{\beta} \E^\P \bigg[ \int_0^T \mathrm{e}^{\beta t}  \big( \|\delta z_t\|^2 + \|\delta v_t^y\|^2 + \|\delta \mathfrak{u}_t^y\|^2 \big) \d t \bigg].
    \]

We now lift this pointwise estimate to the functional space norm. We multiply the entire inequality by the fixed weight $\rho(y)$, and take the supremum. Using that $2\sup(a^2 + b^2) \geq \sup(a^2) + \sup(b^2)$, and potentially changing the constants, we obtain:

    \begin{equation}\label{eq:est_U_detailed}
        \|\delta U\|^2_{\mathbb{S}^{\smalltext{2}\smalltext{,}\smalltext{2}}_{\smalltext{\beta}\smalltext{,} \smalltext{\rho}}(\R^\smalltext{n},\F,\P)} + \|\delta V\|^2_{\mathbb{H}^{\smalltext{2}\smalltext{,}\smalltext{2}}_{\smalltext{\beta}\smalltext{,} \smalltext{\rho}}(\R^{\smalltext{n} \smalltext{\times} \smalltext{d}},\F,\P)} \le \frac{C  C_\rho}{\beta} \Big( \|\delta z\|^2_{\mathbb{H}^\smalltext{2}_\smalltext{\beta}(\R^{\smalltext{d}},\F,\P)} + \|\delta v\|^2_{\mathbb{H}^{\smalltext{2}\smalltext{,}\smalltext{2}}_{\smalltext{\beta}\smalltext{,} \smalltext{\rho}}(\R^{\smalltext{n} \smalltext{\times} \smalltext{d}},\F,\P)} + \|\delta \mathfrak{u}\|^2_{\mathbb{S}^{\smalltext{2}\smalltext{,}\smalltext{2}}_{\smalltext{\beta}\smalltext{,} \smalltext{\rho}}(\R^{\smalltext{n}\smalltext{\times} \smalltext{n}},\F,\P)} \Big).
    \end{equation}
    
    \medskip
    \item[$(iii)$]\textit{Estimation of the Hessian process $(\mathcal{U}, \mathcal{V})$.}
    The analysis for the Hessian system \eqref{eq:map_calU_detailed} mirrors that of the gradient exactly. The driver $G_2$ satisfies the same Lipschitz condition. Multiplying by $\rho(y)$, taking the supremum, and using the large $\beta$ estimate yields
    \begin{equation}\label{eq:est_calU_detailed}
        \|\delta \mathcal{U}\|^2_{\mathbb{S}^{\smalltext{2}\smalltext{,}\smalltext{2}}_{\smalltext{\beta}\smalltext{,} \smalltext{\rho}}(\R^{\smalltext{n}\smalltext{\times} \smalltext{n}},\F,\P)} + \|\delta \mathcal{V}\|^2_{\mathbb{H}^{\smalltext{2}\smalltext{,}\smalltext{2}}_{\smalltext{\beta}\smalltext{,} \smalltext{\rho}}(\R^{\smalltext{n}\smalltext{\times}\smalltext{n}\smalltext{\times}\smalltext{d}},\F,\P)} \le \frac{C  C_\rho}{\beta} \Big( \|\delta z\|^2_{\mathbb{H}^\smalltext{2}_\smalltext{\beta}(\R^\smalltext{d},\F,\P)} + \|\delta v\|^2_{\mathbb{H}^{\smalltext{2}\smalltext{,}\smalltext{2}}_{\smalltext{\beta}\smalltext{,} \smalltext{\rho}}(\R^{\smalltext{n}\smalltext{\times} \smalltext{d}},\F,\P)} + \|\delta \mathfrak{v}\|^2_{\mathbb{H}^{\smalltext{2}\smalltext{,}\smalltext{2}}_{\smalltext{\beta}\smalltext{,} \smalltext{\rho}}(\R^{\smalltext{n}\smalltext{\times}\smalltext{n}\smalltext{\times}\smalltext{d}},\F,\P)} \Big).
    \end{equation}

\medskip
    \item[$(iv)$]\textit{Conclusion.}
    We sum the inequalities \eqref{eq:est_Y_detailed}, \eqref{eq:est_U_detailed}, and \eqref{eq:est_calU_detailed}. Let $\|\delta \mathbf{W}\|^2_{\mathcal{K}_\smalltext{\beta}}$ denote the total squared norm of the difference in the output, which is the sum of the squared norms of all components. Similarly, let $\|\delta \mathbf{w}\|^2_{\mathcal{K}_\smalltext{\beta}}$ denote the norm of the input difference. Combining the estimates, we find
   \begin{align*}
        \|\delta \mathbf{W}\|^2_{\mathcal{K}_\smalltext{\beta}(\F,\P)} &= \Big( \|\delta Y\|^2_{\mathbb{S}^\smalltext{2}_\smalltext{\beta}(\R,\F,\P)} + \|\delta Z\|^2_{\mathbb{H}^\smalltext{2}_\smalltext{\beta}(\R^\smalltext{d},\F,\P)} \Big) + \Big( \|\delta U\|^2_{\mathbb{S}^{\smalltext{2}\smalltext{,}\smalltext{2}}_{\smalltext{\beta}\smalltext{,} \smalltext{\rho}}(\R^\smalltext{n},\F,\P)} + \|\delta V\|^2_{\mathbb{H}^{\smalltext{2}\smalltext{,}\smalltext{2}}_{\smalltext{\beta}\smalltext{,} \smalltext{\rho}}(\R^{\smalltext{n} \smalltext{\times} \smalltext{d}},\F,\P)} \Big) + \\ 
        &\quad+ \Big( \|\delta \mathcal{U}\|^2_{\mathbb{S}^{\smalltext{2}\smalltext{,}\smalltext{2}}_{\smalltext{\beta}\smalltext{,} \smalltext{\rho}}(\R^{\smalltext{n} \smalltext{\times} \smalltext{n}},\F,\P)} + \|\delta \mathcal{V}\|^2_{\mathbb{H}^{\smalltext{2}\smalltext{,}\smalltext{2}}_{\smalltext{\beta}\smalltext{,} \smalltext{\rho}}(\R^{\smalltext{n} \smalltext{\times} \smalltext{n} \smalltext{\times} \smalltext{d}},\F,\P)} \Big) \\
        &\le \frac{\tilde C}{\beta} \Big( \|\delta z\|^2_{\mathbb{H}^\smalltext{2}_\smalltext{\beta}(\R^\smalltext{d},\F,\P)} + \|\delta u\|^2_{\mathbb{S}^{\smalltext{2}\smalltext{,}\smalltext{2}}_{\smalltext{\beta}\smalltext{,} \smalltext{\rho}}(\R^\smalltext{n},\F,\P)} + \|\delta v\|^2_{\mathbb{H}^{\smalltext{2}\smalltext{,}\smalltext{2}}_{\smalltext{\beta}\smalltext{,} \smalltext{\rho}}(\R^{\smalltext{n} \smalltext{\times} \smalltext{d}},\F,\P)} + \|\delta \mathfrak{u}\|^2_{\mathbb{S}^{\smalltext{2}\smalltext{,}\smalltext{2}}_{\smalltext{\beta}\smalltext{,} \smalltext{\rho}}(\R^{\smalltext{n}\smalltext{\times} \smalltext{n}},\F,\P)} + \|\delta \mathfrak{v}\|^2_{\mathbb{H}^{\smalltext{2}\smalltext{,}\smalltext{2}}_{\smalltext{\beta}\smalltext{,} \smalltext{\rho}}(\R^{\smalltext{n}\smalltext{\times}\smalltext{n}\smalltext{\times}\smalltext{d}},\F,\P)} \Big) \\
        &\le \frac{\tilde{C}}{\beta} \|\delta \mathbf{w}\|^2_{\mathcal{K}_\beta(\F,\P)},
    \end{align*}

    where $\tilde{C}$ depends only on the Lipschitz-continuity constants, the weight parameter $k$, the maturity $T$ and the moments of $X$. By choosing $\beta > \tilde{C}$, the factor $\frac{\tilde{C}}{\beta}$ becomes strictly less than 1. This proves that the map $\Phi$ is a contraction on the Banach space $\mathcal{K}_\beta(\F,\P)$ when $\beta > \tilde{C}$. Consequently, by the Banach fixed-point theorem, there exists a unique fixed point $\mathbf{W}^{\star} \in \mathcal{K}_\beta(\F,\P)$ such that $\mathbf{W}^{\star} = \Phi(\mathbf{W}^{\star})$. This fixed point is the unique solution to the BSDE system \eqref{BSDE Weak Formulation} in $\mathcal{K}_\beta(\F,\P)$.
    \end{enumerate}
    \medskip
    
  The second part of the theorem is a direct consequence of Lemma \ref{lemma:embedding_spaces}.
\end{proof}

\end{appendix}

\end{document}